\newtheorem{theorem}{Theorem}[section]
\newtheorem{lemma}[theorem]{Lemma}
\newtheorem{prop}[theorem]{Proposition}
\newtheorem{cor}[theorem]{Corollary}
\newtheorem{conj}[theorem]{Conjecture}
\theoremstyle{definition}
\newtheorem{defn}[theorem]{Definition}
\newtheorem{remark}[theorem]{Remark}
\newtheorem{example}[theorem]{Example}
\newtheorem{asm}[theorem]{Assumption}
\newcommand{\PP}{{\mathbb P}}
\newcommand{\Q}{{\mathbb Q}}
\newcommand{\R}{{\mathbb R}}
\newcommand{\Z}{{\mathbb Z}}
\newcommand{\Aut}{\mathrm{Aut}}	
\newcommand{\calM}{\mathcal M}
\newcommand{\calO}{\mathcal O}
\newcommand{\calI}{\mathcal I}
\newcommand{\calD}{\mathcal D}
\newcommand{\calX}{\mathcal X}
\newcommand{\calT}{\mathcal T}
\newcommand{\calL}{\mathcal L}
\newcommand{\calF}{\mathcal F}
\newcommand{\calU}{\mathcal U}
\newcommand{\Sing}{\mathrm{Sing}}
\newcommand{\Xv}{X^{\nu}}
\newcommand{\De}{D_{\epsilon}}
\newcommand{\Den}{D_{\epsilon,n}}
\newcommand{\ps}{\pi_{\textrm{ss}}}
\newcommand{\xdashrightarrow}[2][]{\ext@arrow 0359\rightarrowfill@@{#1}{#2}}
\def\rightarrowfill@@{\arrowfill@@\relax\relbar\rightarrow}
\def\arrowfill@@#1#2#3#4{%
  $\m@th\thickmuskip0mu\medmuskip\thickmuskip\thinmuskip\thickmuskip
   \relax#4#1
   \xleaders\hbox{$#4#2$}\hfill
   #3$%
}\makeatother
\begin{document}
\title{A fibered power theorem for pairs of log general type} 
\author{Kenneth Ascher}
\email{kenascher@math.brown.edu}
\address{Mathematics Department, Brown University}
\author{Amos Turchet}
\email{tamos@chalmers.se}
\address{Mathematical Sciences, Chalmers University}
\thanks{Research of Ascher supported in part by funds from NSF grant DMS-1162367.}
%\classification{14J29, 14J10, 14E99}
%\keywords{log general type, stable pairs, fibered powers}
\date{\today}
\begin{abstract} Let $f: (X,D) \to B$ be a stable family with log canonical general fiber. We prove that, after a birational modification of the base $\widetilde{B} \to B$, there is a morphism from a high fibered power of the family to a pair of log general type.  If in addition the general fiber is openly canonical, then there is a morphism from a high fibered power of the original family to a pair openly of log general type. \end{abstract}

\maketitle

\section{Introduction}
\noindent We work over an algebraically closed field of characteristic 0.\\

Analyzing the geometry of fibered powers of families of varieties has been pivotal in showing that well known conjectures in Diophantine geometry imply uniform boundedness of rational points on algebraic varieties of general type. In 1997, Caporaso, Harris, and Mazur, show in their celebrated paper \cite{chm}, that various versions of Lang's conjecture imply uniform boundedness of rational points on curves of general type. More precisely, they show that, assuming Lang's conjecture, for every number field $K$ and integer $g \geq 2$, there exists an integer $B(K,g)$ such that no smooth curve of genus $g \geq 2$ defined over $K$ has more than $B(K,g)$ rational points. Similar statements were proven for the case of surfaces of general type by Hassett \cite{has}, and eventually all positive dimensional varieties of general type in a series of two papers by Abramovich and  Abramovich-Voloch (\cite{dan} and  \cite{av}).  The essence of these papers, is a purely algebro-geometric statement: the proof of a ''fibered power theorem'', which analyzes the behavior of families of varieties of general type. 

The goal of our current research program is to apply birational geometry and moduli theory to show that the \emph{Vojta-Lang conjecture} (see Conjecture \ref{vl}) implies uniform boundedness of \emph{stably integral points} (see \cite{dan2}) on varieties of \emph{log general type}.   In short, our objective is to generalize the uniformity results mentioned above from varieties of general type to pairs of log general type. The first step is a generalization of the fibered power theorem alluded to above to the context of pairs, and we take the goal of this paper to be the proof of the following theorem:

\begin{theorem}\label{fiber} Let $(X,D) \to B$ be a stable family with integral and log canonical general fiber over a smooth projective variety $B$. Then after a birational modification of the base $\widetilde{B} \to B$, there exists an integer $n > 0$, a positive dimensional pair $(\widetilde{W},\widetilde{\Delta})$ of log general type, and a morphism $(\widetilde{X}^n_B, \widetilde{D}_n) \to (\widetilde{W}, \widetilde{\Delta})$.\end{theorem}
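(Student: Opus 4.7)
The plan is to adapt the fibered power strategy of Abramovich and Abramovich--Voloch to the log setting, with the KSBA moduli stack $\overline{\mathcal{M}}$ of stable log pairs playing the role of the moduli of canonically polarized varieties, and the positivity of the CM/Hodge line bundle on $\overline{\mathcal{M}}$ (in the spirit of Koll\'ar, Kov\'acs--Patakfalvi, and Patakfalvi) playing the role of classical Viehweg positivity.

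First I would associate to the stable family $f : (X, D) \to B$ its moduli map $\mu : B \dashrightarrow \overline{\mathcal{M}}$. By properness of the KSBA functor together with the valuative criterion and semistable reduction, one can, after a birational modification $\widetilde{B} \to B$, extend $\mu$ to an honest morphism $\widetilde{\mu} : \widetilde{B} \to \overline{\mathcal{M}}$. Stein factorizing as $\widetilde{B} \xrightarrow{h} Y \xrightarrow{g} \overline{\mathcal{M}}$ with $g$ generically finite onto its image $Z \subseteq \overline{\mathcal{M}}$ splits the problem into two cases. If the family is isotrivial so that $Z$ is a point, the theorem is immediate: one projects $\widetilde{X}^n_{\widetilde{B}}$ onto a general fiber $(F, D_F)$, which is a pair of log general type since KSBA stability forces $K_F + D_F$ to be ample. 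The substantive case is when $\dim Z > 0$.

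In the non-isotrivial case, the role of the fibered power is to amplify positivity. Using weak positivity of direct images of relative pluri-log-canonical sheaves applied to $f^n : \widetilde{X}^n_{\widetilde{B}} \to \widetilde{B}$ for $n \gg 0$, together with bigness of the CM line bundle along $Z$, I would produce a big log canonical divisor on a suitable birational model $(\widetilde{W}, \widetilde{\Delta})$ of $Z$; here $\widetilde{\Delta}$ combines the boundary of $\overline{\mathcal{M}}$, the ramification of $g$, and the contributions coming from degenerations of $(X, D)$ over $\widetilde{B}$. The desired morphism of pairs is then obtained from the composition $\widetilde{X}^n_{\widetilde{B}} \to \widetilde{B} \to Y \to \widetilde{W}$, followed by a further birational modification to turn the natural rational map into a morphism compatible with the boundary.

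The main obstacle is boundary bookkeeping. Unlike the classical case, every birational modification — of $B$, of the Stein factorization, of the moduli map, and the fibered power — has to be made compatibly so that the horizontal boundary $D$ of the family and the moduli-theoretic boundary on $\overline{\mathcal{M}}$ assemble into a $\widetilde{\Delta}$ for which $K_{\widetilde{W}} + \widetilde{\Delta}$ is \emph{big} and not merely pseudo-effective. Equally delicate is showing that weak positivity in the log setting actually propagates through fibered powers with a gain strong enough to overcome the negative contributions from boundary components. Choosing $n$ large enough to absorb those contributions, and proving that this is sufficient, is where I expect the bulk of the log-specific technical work to lie.
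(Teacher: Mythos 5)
There is a genuine gap, and it is structural: your candidate $(\widetilde{W},\widetilde{\Delta})$ is a birational model of the image $Z$ of the base in the moduli space, so your morphism $\widetilde{X}^n_{\widetilde{B}}\to\widetilde{W}$ factors through $\widetilde{B}$. None of the tools you invoke produces a big \emph{log canonical} divisor on a model of $Z$: weak positivity of direct images of relative pluri-log-canonical sheaves lives on $\widetilde{B}$ and does not descend along $\widetilde{B}\to Y\to Z$, and bigness of the CM/Hodge line bundle restricted to $Z$ only gives an auxiliary big polarization, not bigness of $K_{\widetilde{W}}+\widetilde{\Delta}$ (compare the paper's example of the cone over a plane quintic: auxiliary positivity does not force general type). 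What your step ``produce a big log canonical divisor on a model of $Z$'' really asserts is a Viehweg-type hyperbolicity statement for subvarieties of $\overline{M}_h$ swept out by families whose fibers carry coefficient-one boundary; but the Kov\'acs--Patakfalvi positivity underlying such statements fails for log canonical general fibers (their Example 7.5), and circumventing exactly this failure---via a $\Q$-factorial dlt modification followed by a relative log canonical model, applied to the total space---is the technical heart of the paper (Proposition \ref{lcbig}). Moreover, if a target through which $f_n$ factors sufficed, the integer $n$ would play no role at all, which signals that the proposal misses the actual function of fibered powers: the fiber directions themselves must contribute to the log general type target. A smaller but real problem is your isotrivial case: an isotrivial family is trivialized only after a finite cover, not after a birational modification of the base, so ``projecting onto a general fiber'' is not a morphism from $\widetilde{X}^n_{\widetilde{B}}$.

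The paper instead keeps the total space in the target. It maps $B$ to $\overline{M}_h$, passes to a finite Galois cover (plus equivariant resolution) $\widetilde{\Sigma}$ of the image, over which a tautological family $(\calT_{\widetilde{\Sigma}},\widetilde{\calD})$ of maximal variation exists, and shows (Proposition \ref{42}) that after the base change $\widetilde{B}=B\times_{\Sigma}\widetilde{\Sigma}/H$ the pulled-back family maps to the quotient $(\calT_{\widetilde{\Sigma}},\widetilde{\calD})/G$ by a finite group $G$, identified as the normalization of the pullback family over $\widetilde{\Sigma}/H$. The pair of log general type is then $(\widetilde{W},\widetilde{\Delta})=(\calT^n_{\widetilde{\Sigma}},\widetilde{\calD}_n)/G$ for $n\gg 0$: bigness of the log canonical sheaf of the fibered power comes from the Kov\'acs--Patakfalvi theorem in the klt case plus the dlt-modification argument to reach the lc case; log canonicity of fibered powers comes from inversion of adjunction for slc families (Proposition \ref{sing}); and the finite quotient is handled by producing invariant pluri-log-canonical sections vanishing to high order along the fixed locus (Corollary \ref{group2}, feeding into Theorem \ref{thm3}). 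These three steps are absent from your proposal, and the first two are precisely where the log-specific difficulty you anticipate actually lies.
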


Moreover, with an additional assumption on the singularities of the general fiber, we can avoid a modification of the base:

\begin{theorem}\label{fiber2} Let $(X,D) \to B$ be a stable family with integral, openly canonical, and log canonical general fiber (see Definition \ref{open})  over a smooth projective variety $B$. Then there exists an integer $n > 0$, a positive dimensional pair $(W, \Delta)$ openly of log general type, and a morphism $(X^n_B, D_n) \to (W, \Delta)$. \end{theorem}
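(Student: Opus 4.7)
The plan is to deduce Theorem \ref{fiber2} from Theorem \ref{fiber} by using the openly canonical hypothesis on the general fiber to bypass the base modification $\widetilde{B} \to B$. To this end, I would first apply Theorem \ref{fiber} to $(X,D) \to B$ and obtain a modification $\widetilde{B} \to B$, an integer $n$, a pair $(\widetilde{W}, \widetilde{\Delta})$ of log general type, and a morphism $\phi \colon (\widetilde{X}^n_B, \widetilde{D}_n) \to (\widetilde{W}, \widetilde{\Delta})$. The remaining task is to descend $\phi$ to a morphism from the original fibered power $(X^n_B, D_n)$ to a pair $(W, \Delta)$ openly of log general type.

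The key observation is that the base modification in Theorem \ref{fiber} arises from resolving indeterminacies of the moduli map $B \dashrightarrow \overline{\mathcal{M}}$ to the KSBA-type moduli stack of stable pairs, together with adjustments to the boundary divisor under the birational operations needed to arrive at a pair of log general type. When the general fiber is openly canonical, the canonical class of the open complement $X \setminus D$ is already in canonical form, so the relevant birational modifications become crepant over the interior. In particular, the positivity properties of $K+D$ on the open part are preserved by the operations in the proof of Theorem \ref{fiber}, which suggests one can define $(W,\Delta)$ directly over $B$ rather than over $\widetilde{B}$.

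Concretely, I would define $W$ as the image of $X^n_B$ under the natural composition with $\phi$ (after contracting the exceptional locus of $\widetilde{B}\to B$), with $\Delta$ the corresponding boundary. Over the open locus $W \setminus \Delta$, the birational map $\widetilde{W} \dashrightarrow W$ is an isomorphism thanks to the openly canonical hypothesis, so the bigness of $K_{\widetilde{W}}+\widetilde{\Delta}$ restricts to the required positivity on $W \setminus \Delta$. This yields the pair openly of log general type, together with the morphism $(X^n_B, D_n) \to (W, \Delta)$.

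The main obstacle is making precise the slogan that openly canonical singularities allow one to avoid the base modification on the open part. This requires a careful analysis of the moduli theory for log canonical pairs and the verification that the openly canonical condition lifts the rational moduli map to a morphism on the interior. One must then track this property through the fibered power construction to ensure that the target $(W,\Delta)$ inherits the openly of log general type condition, rather than merely becoming so after a further modification that would defeat the point of avoiding the base change in the first place.
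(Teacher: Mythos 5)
There is a genuine gap here. Your plan is to take the morphism $\phi\colon (\widetilde{X}^n_B, \widetilde{D}_n) \to (\widetilde{W}, \widetilde{\Delta})$ furnished by Theorem \ref{fiber} and ``descend'' it to a morphism out of $(X^n_B, D_n)$ by contracting the exceptional locus of $\widetilde{B} \to B$. But the maps go the wrong way: the modification gives a morphism $\widetilde{X}^n_B \to X^n_B$, not a morphism $X^n_B \to \widetilde{X}^n_B$, so ``the image of $X^n_B$ under the natural composition with $\phi$'' is not defined; at best you get a rational map $X^n_B \dashrightarrow \widetilde{W}$, whereas the theorem demands a morphism. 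Contracting loci in $\widetilde{W}$ does not by itself make the composite $\widetilde{X}^n_B \to W$ factor through $X^n_B$. Moreover, your claimed mechanism --- that the openly canonical hypothesis makes the relevant birational modifications ``crepant over the interior'' so that $\widetilde{W} \dashrightarrow W$ is an isomorphism over $W \setminus \Delta$ --- is unsubstantiated: the hypothesis concerns the singularities of the fibers of the original family, and in the paper it is used in a different place entirely (in Theorem \ref{openbig}, to ensure that sections of $\omega_{X_{ss}}(D^{ss})$ on the weak semistable model give regular sections on any desingularization). Also, the moduli map $B \to \overline{M}_h$ of a stable family is already a morphism, so the base modification in Theorem \ref{fiber} is not there to resolve indeterminacies of a rational moduli map; it comes from passing to a finite cover of the coarse image (where a tautological family exists) and an equivariant resolution.

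The paper's route avoids the descent problem altogether. From Proposition \ref{42} one has, with no modification of $B$, a morphism from $(X,D)$ to the coarse tautological family $(\underline{\calT}_{\Sigma}, \underline{\calD})$ over the image $\underline{\Sigma}$ of the moduli map, hence a morphism $(X^n_B, D_n) \to (\underline{\calT}^n_{\Sigma}, \underline{\calD}_n)$; the only question is whether this target is openly of log general type. Theorem \ref{thm38} (which is where the openly canonical hypothesis is used) shows that $(\calT^n_{\widetilde{\Sigma}}, \widetilde{\calD}_n)/G$ is openly of log general type for $n \gg 0$, and since there is a birational morphism $(\calT^n_{\widetilde{\Sigma}}, \widetilde{\calD}_n)/G \to (\underline{\calT}^n_{\Sigma}, \underline{\calD}_n)$ and Definition \ref{def2} is a birational invariant, the target $(\underline{\calT}^n_{\Sigma}, \underline{\calD}_n)$ is openly of log general type as well. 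Your high-level intuition --- exploit birational invariance of the ``openly'' notion to avoid modifying $B$ --- matches the paper's, but to close the gap you need to build the morphism from $(X^n_B, D_n)$ directly to the coarse tautological family (or an equivalent target defined over $B$ itself) rather than trying to push the morphism of Theorem \ref{fiber} down from $\widetilde{X}^n_B$.
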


 We state two theorems to satisfy the two competing definitions of \emph{log general type}, one which often appears in birational geometry, and one which is useful for arithmetic applications. To remove any ambiguity, we refer to the former as a pair of log general type, and the latter as a pair openly of log general type. See Definitions \ref{def} and \ref{def2} for precise definitions.\\

Next, we wish to briefly outline how the previous theorem can be used for arithmetic applications. The analogue of Lang's conjecture in the setting of pairs is the following conjecture, due to Vojta and reformulated using ideas of Lang:

\begin{conj}\label{vl}[Vojta-Lang]:  Let $K$ be a number field and let $S$ be a finite set of places of $K$. Let $(X,D)$ be a pair openly of log general type defined over $K$. Then the set of $S$-integral points of $X \setminus D$ is not Zariski dense. \end{conj}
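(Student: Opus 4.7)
The plan is to attempt an approach via Vojta's height machinery. At the outset I should emphasize that this conjecture is one of the central open problems in Diophantine geometry, known unconditionally only for curves (Siegel's theorem), complements of sufficiently many hyperplanes in $\PP^n$ (via the Schmidt subspace theorem), and subvarieties of semi-abelian varieties (work of Faltings and Vojta). The strategy below is the natural template; its critical step is itself a conjecture of Vojta whose general form is essentially equivalent to the statement we wish to prove.

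First I would reduce to the case in which $X$ is smooth projective over $K$, $D = \sum D_i$ is a reduced simple normal crossings divisor, and the set of $S$-integral points of $X \setminus D$ is the set of $\calO_{K,S}$-points of a fixed integral model. This uses resolution of singularities and permits a finite extension of $K$ together with an enlargement of $S$, neither of which affects Zariski density of the set under consideration. The hypothesis that $(X,D)$ is openly of log general type yields, for $m \gg 0$, a rational map $\varphi_m \colon X \dashrightarrow \PP^N$ defined by sections of $m(K_X+D)$ that is birational onto its image and restricts to a morphism on $X \setminus D$; equivalently, $m(K_X+D) \sim A + E$ with $A$ ample and $E$ effective supported away from the integral locus.

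Next, I would invoke Vojta's main conjecture for $(X,D)$: for every $\epsilon > 0$ there should exist a proper closed $Z \subsetneq X$ such that every $P \in (X \setminus Z)(K)$ satisfies
\[ m_S(D,P) + h_{K_X}(P) \le \epsilon \, h_A(P) + O(1), \]
where $m_S(D,P)$ denotes the truncated proximity function. For an $S$-integral point one has $m_S(D,P) = h_D(P) + O(1)$, so this reads $h_{K_X+D}(P) \le \epsilon\, h_A(P) + O(1)$; combined with bigness of $K_X+D$ (which gives $h_A(P) \le m\, h_{K_X+D}(P) + O(1)$ off $E$), choosing $\epsilon < 1/m$ forces $h_A(P) = O(1)$ on $S$-integral points outside $Z \cup E$. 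Northcott's theorem then yields finiteness, hence non-density, and a Noetherian induction on $Z$ concludes.

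The main obstacle is, of course, the invocation of Vojta's inequality itself: in the stated generality it is equivalent to Conjecture \ref{vl} and no unconditional proof is known outside the special classes listed above. Bridging the gap would require either a substantial extension of the Schmidt subspace theorem or of Faltings's product theorem to arbitrary log-general-type pairs, or a genuinely new input such as an arithmetic analogue of Kobayashi hyperbolicity in the log setting. For this reason the present paper does not attempt to prove Conjecture \ref{vl} directly; instead, the fibered-power construction of Theorem \ref{fiber2} is designed so that, once Conjecture \ref{vl} is applied to the target pair $(\widetilde{W}, \widetilde{\Delta})$, uniformity-type conclusions for stably integral points on the fibers of $f$ may be propagated back to the original family.
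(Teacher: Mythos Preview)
The statement you are addressing is Conjecture~\ref{vl}, not a theorem; the paper does not prove it and does not claim to. It is stated purely as motivation for the fibered power theorem and as the conjectural input for the arithmetic applications in \cite{at}. There is therefore no ``paper's own proof'' to compare against.

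You recognize this explicitly, which is to your credit: your write-up is not a proof but a correct summary of the conjectural landscape. Your reduction to the smooth projective simple normal crossings case is standard and fine, and your derivation of bounded height from Vojta's main inequality for $S$-integral points is the well-known formal argument. You are also right that this step is essentially equivalent to the conjecture itself, so the argument is circular as a proof. Your closing paragraph accurately describes how the paper uses the conjecture rather than proving it.

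In short: there is no gap to diagnose and no alternative route to compare, because neither you nor the paper proves the statement. If this were submitted as a proof, the honest verdict is that it is not one; as an explanatory gloss on why Conjecture~\ref{vl} is stated and how it interacts with Theorem~\ref{fiber2}, it is appropriate.
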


Given a stable family, Theorem \ref{fiber2} gives a morphism from a high fibered power of this family to a pair $(W, \Delta)$ openly of log general type. Conjecture \ref{vl} then predicts that the integral points of $W \setminus \Delta$ are not Zariski dense. This allows us to prove uniformity results for integral points on higher dimensional pairs of log general type assuming that the Vojta-Lang conjecture holds \cite{at}.  \\

In what follows, we give the two definitions used for pairs (openly) of log general type.

\begin{defn}\label{def} A pair $(X,D)$ of a proper variety $X$ and an effective $\Q$-divisor $D$ is of \textbf{log general type} if:
\begin{itemize}
\item $(X,D)$ has log canonical singularities and
\item $\omega_X(D)$ is big.
\end{itemize}\end{defn}

 For applications to arithmetic (for instance in the upcoming \cite{at}), it will be useful to consider the following.

\begin{defn}\label{def2} Let $X$ be a quasi-projective variety and let $\widetilde{X} \to X$ be a desingularization. Let $\widetilde{X} \subset Y$ by a projective embedding and suppose $D = Y \setminus \widetilde{X}$ is a divisor of normal crossings. Then $X$ is \textbf{openly of log general type} if $\omega_Y(D)$ is big. \end{defn}

Note that this second definition is independent of both the choice of the desingularization as well as the embedding; it is also a birational invariant. Moreover, definitions \ref{def} and \ref{def2} are equivalent if the pair $(X,D)$ has log canonical singularities and one considers the variety $X \setminus (D \cup \Sing(X))$.\\

Just to reiterate, we will refer to Definition \ref{def} by saying $(X,D)$ is a \emph{pair of log general type}. We will refer to Definition \ref{def2} by stating that the pair is \emph{openly of log general type}, as the definition is motivated by considering the complement $X \setminus D$. Throughout the course of this paper, we will take care to specify which definition we are using. 

\begin{defn}\label{open} By \emph{openly canonical}, we mean that the variety  $X \setminus D$ has canonical singularities. \end{defn}

For definitions of canonical, log canonical singularities, and stable pairs, see Definitions \ref{lc} and \ref{stable}.
\subsection{Ideas of proof}

To prove Theorem \ref{fiber}, we show that it suffices to prove the following:

\begin{theorem}[See Theorem \ref{thm3}] Let $(X,D) \to B$ be a stable family with integral and log canonical general fiber over a smooth projective variety $B$. Suppose that the variation of the family $f$ is maximal (see Definition \ref{max}). Let $G$ be a finite group such that $(X,D) \to B$ is $G$-equivariant. Then there exists an integer $n > 0$ such that the quotient of the pair by a finite group of automorphisms, $(X_B^n/G, D_n/G)$ is of log general type. \end{theorem}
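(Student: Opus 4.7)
The plan is to adapt the classical fibered power strategy of Abramovich--Voloch, together with Viehweg's weak positivity and Kollár's ampleness lemma, to the setting of stable log pairs. The argument will proceed in three main steps.

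First, I would invoke a logarithmic version of Viehweg's weak positivity theorem for the direct image of the pluri-log-canonical sheaf: for the stable family $(X,D) \to B$ with log canonical general fiber, the sheaf $\calF_N := f_*\calO_X(N(K_{X/B}+D))$ is torsion-free and weakly positive on $B$ for all sufficiently divisible $N>0$. Such a statement follows from the log extensions of Viehweg's work due to Kov\'acs--Patakfalvi and Fujino, and uses that for a stable family $\omega_{X/B}(D)$ is relatively ample.

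Second, I would pass to the $n$-fold fibered power $f^n: X^n_B \to B$ with boundary $D_n = \sum_{i=1}^n \pi_i^*D$. Over a dense open subset of $B$ there is a natural identification $f^n_*\calO_{X^n_B}(N(K_{X^n_B/B}+D_n)) \cong \calF_N^{\otimes n}$. The maximal variation hypothesis ensures that the moduli map $B \to \overline{\calM}$ to the KSBA moduli of stable pairs is generically finite; combined with a log version of Kollár's ampleness lemma, this gives that $\det \calF_N$ is big on $B$. Viehweg's fiber product trick then promotes this to bigness of $\calF_N^{\otimes n}$ for $n$ large, and pulling back along $f^n$ together with the relative log-ampleness of $\omega_{X^n_B/B}(D_n)$ yields that $K_{X^n_B}+D_n$ is big on $X^n_B$.

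Third, I would descend to the quotient. The $G$-action lifts to a log equivariant action on $(X^n_B, D_n)$ preserving the boundary, and I write $q: X^n_B \to X^n_B/G$ for the quotient. By Kollár's results on quotients of log canonical pairs by finite groups, $(X^n_B/G, D_n/G)$ is log canonical once the same is verified upstairs, which holds since the total space of a stable family is semi-log canonical and fibered powers can be reduced to the slc setting by passing to a canonical model. Ramification of $q$ is absorbed into the boundary so that $q^*(K_{X^n_B/G}+D_n/G)=K_{X^n_B}+D_n$ as $\Q$-Cartier divisors; since $q$ is a finite surjection, bigness descends from the pullback, and $(X^n_B/G, D_n/G)$ is of log general type as required.

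The principal obstacle lies in Step 2: the fibered power $X^n_B$ need not itself be a stable log pair, its singularities can fail to be semi-log canonical, and $K_{X^n_B/B}+D_n$ may not be $\Q$-Cartier without modification. One must therefore either work on a suitable resolution or canonical model, or establish the required weak positivity and ampleness statements in sufficient singular generality to cover the fibered power directly. A secondary, related difficulty is formulating and proving the log analogue of Kollár's ampleness lemma that bridges weak positivity plus maximal variation to the bigness of $\det \calF_N$; this step is precisely what ties the moduli-theoretic input (generic finiteness of the moduli map) to the positivity conclusion on the fibered power, and is the key new ingredient beyond the classical case.
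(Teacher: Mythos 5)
Your outline follows the classical Viehweg--Koll\'ar route, but it has a genuine gap precisely at the step you yourself single out as the ``key new ingredient.'' The bridge ``weak positivity of $f_*\calO_X(N(K_{X/B}+D))$ plus generic finiteness of the moduli map $\Rightarrow$ bigness of $\det\calF_N$, hence of $K_{X^n_B}+D_n$'' is not merely unproven in the log setting --- it is \emph{false} for families whose general fiber is log canonical but not klt: Example 7.5 of \cite{kp} exhibits a stable family of maximal variation with lc fibers for which the pushforward of the pluri-log-canonical sheaf is not big. The paper avoids this by never working with the coefficient-one boundary directly: one lowers coefficients to $\De=(1-\epsilon)D$, where the general fiber becomes klt and the Kov\'acs--Patakfalvi theorem (Theorem \ref{kp}) gives bigness of $f_*(\omega_f(\De)^m)$, and the Hassett-style fibered power computation (Proposition \ref{kltbig}) yields bigness of $\omega_{X^n_B}(\Den)$. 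The remaining problem --- passing from $\Den$ back to $D_n$ --- is nontrivial exactly because $D$ is not assumed $\Q$-Cartier, and is resolved by a $\Q$-factorial dlt modification followed by the relative log canonical model (Proposition \ref{qfact}, after \cite{px}), where $\epsilon\Delta$ becomes $\Q$-Cartier and effective so coefficients can be raised again. Your proposal contains no substitute for this perturbation-and-modification step, and without it the positivity input you want simply does not exist in the stated generality.

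The quotient step also has a gap. The crepant equality $q^*(K_{X^n_B/G}+D_n/G)=K_{X^n_B}+D_n$ fails in general: the diagonal $G$-action can have divisorial fixed loci, so $K_{X^n_B}+D_n = q^*(K_{X^n_B/G}+D_n/G)+R$ with $R$ effective, and bigness of a big divisor \emph{minus} an effective ramification term does not descend just because $q$ is finite. This is why the paper (following \cite{chm}, \cite{dan2}, \cite{AM}) argues instead with $G$-invariant sections of $\omega_{X^k_B}(D_k)^{\otimes n}$ that vanish to order at least $n|G|$ along the fixed-point locus; such sections exist for $n\gg0$ because of the bigness established above, and by Lemma 4 of \cite{dan2} they descend to pluri-log-canonical sections on a good resolution of the quotient (Proposition \ref{group}, Corollary \ref{group2}). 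Your Step 3 as written asserts the conclusion of that argument without the mechanism that makes it true.
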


Similarly, to prove Theorem \ref{fiber2}, it suffices to prove the following: \\

\begin{theorem}[See Theorem \ref{thm38}] Let $f: (X,D) \to B$ be a stable family with integral, openly canonical, and log canonical general fiber over a smooth projective variety $B$. Suppose that the variation of the family $f$ is maximal.  Let $G$ be a finite group such that $(X,D) \to B$ is $G$-equivariant.  Then there exists an integer $n > 0$ such that the quotient $(X^n_B/G, D_n/G)$ is openly of log general type. \end{theorem}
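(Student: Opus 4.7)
My plan is to proceed in parallel to the proof of Theorem \ref{thm3}, following the classical blueprint of the fibered power theorem (Caporaso--Harris--Mazur, Abramovich--Voloch) adapted to the setting of stable pairs, with careful attention to tracking the openly canonical hypothesis. The broad outline is to exploit maximal variation to establish positivity of some relative log canonical sheaf on the base $B$, combine this with fiberwise log general type to obtain bigness of $\omega_{X^n_B}(D_n)$ on a sufficiently high fibered power, and then descend to the $G$-quotient.

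For the base contribution, use maximal variation together with a log version of Viehweg-type weak positivity for stable families with log canonical fibers: for some integer $k$, the sheaf $\det f_* \omega^{\otimes k}_{X/B}(kD)$ is big on $B$. For the fiber contribution, stable + log canonical + integral fiber yields that $\omega_F(D_F)$ is big on a general fiber $(F,D_F)$. Assembling these on the $n$-fold fibered power $f^n : (X^n_B, D_n) \to B$ (whose relative log canonical is a tensor product of pullbacks by the projections), one shows that for $n \gg 0$ the line bundle $\omega_{X^n_B}(D_n)$ is big. The finite $G$-equivariant map $q:(X^n_B, D_n) \to (X^n_B/G, D_n/G)$ then transfers bigness, since $q_* \omega_{X^n_B}(D_n)$ contains $\omega_{X^n_B/G}(D_n/G)$ as a direct summand up to the ramification divisor; thus $\omega_{X^n_B/G}(D_n/G)$ is big. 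Moreover, the $G$-quotient of a log canonical pair is log canonical, so $(X^n_B/G, D_n/G)$ satisfies Definition \ref{def}, recovering Theorem \ref{thm3}.

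To upgrade to openly of log general type in the sense of Definition \ref{def2}, fix a log resolution $\pi: Y \to X^n_B/G$ with SNC divisor $E = \pi_*^{-1}(D_n/G) + \mathrm{Exc}(\pi)$, so that $Y \setminus E$ is a desingularization of $(X^n_B/G) \setminus (D_n/G)$. The log discrepancy formula gives
\[
K_Y + E = \pi^*\bigl(K_{X^n_B/G} + D_n/G\bigr) + \sum (a_i + 1) E_i,
\]
and log canonicity of the quotient pair forces $a_i + 1 \geq 0$. Hence $\omega_Y(E)$ is the sum of a big pullback and an effective divisor, and is therefore big; this verifies Definition \ref{def2}.

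The main obstacle is ensuring that the openly canonical hypothesis is genuinely used and propagated: canonical singularities on the complement $F \setminus D_F$ of the general fiber must be shown to persist under the fibered power operation (so that $X^n_B \setminus D_n$ has canonical singularities) and then under the finite quotient by $G$ (so that no additional noncanonical boundary contributions appear in the resolution $Y$). This is what justifies identifying $Y \setminus E$ as a resolution of $(X^n_B/G)\setminus(D_n/G)$ without needing to first perform a birational modification of $B$, and is the reason the openly canonical assumption in this theorem plays the role of the base modification $\widetilde{B} \to B$ in Theorem \ref{thm3}. The delicate step will be verifying preservation of canonical singularities under the $G$-action, which is handled by choosing an equivariant log resolution and checking that the stabilizers of the open part act without pseudo-reflections on the canonical locus.
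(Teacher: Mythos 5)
Your outline has the right global shape (positivity from maximal variation, fibered powers, then a quotient step), but three of its load-bearing steps do not hold as stated. First, the positivity input: you invoke ``a log version of Viehweg-type weak positivity for stable families with log canonical fibers.'' The relevant positivity theorem of Kov\'acs--Patakfalvi (Theorem \ref{kp}) requires \emph{klt} general fiber, and its conclusion genuinely fails for log canonical general fiber (Example 7.5 of \cite{kp}); since $D$ is not assumed $\Q$-Cartier one cannot simply perturb coefficients (Remark \ref{klt}). The paper obtains bigness only after a $\Q$-factorial dlt modification followed by a relative log canonical model (Proposition \ref{qfact}, Proposition \ref{lcbig}), and for the present ``openly'' statement it works on the weakly semistable model of Abramovich--Karu and twists down by the discriminant divisor (Theorem \ref{openbig}). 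Second, the quotient step: bigness does \emph{not} descend along the finite map $q\colon X^n_B \to X^n_B/G$ by a ``direct summand up to ramification'' argument -- the ramification divisor works against you (a genus $2$ curve modulo its hyperelliptic involution is $\PP^1$). This is precisely why Proposition \ref{group} produces, for $n\gg 0$, \emph{invariant} sections of $\omega_{X^n_B}(D_n)^{\otimes n}$ vanishing to order at least $n|G|$ along the fixed locus and descends them to a resolution of the quotient via Lemma 4 of \cite{dan2}; some argument of this kind is unavoidable.

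The most serious gap is your verification of Definition \ref{def2}. Your log-discrepancy computation places \emph{every} exceptional divisor of $\pi$ into the boundary $E$, and bigness of $\omega_Y(E)$ with that choice of $E$ is exactly the statement that the quotient pair is of log general type in the sense of Definition \ref{def} -- i.e.\ Theorem \ref{thm3}, not Theorem \ref{thm38}. Definition \ref{def2} only allows the boundary to be the complement of a desingularization of the open part $(X^n_B/G)\setminus(D_n/G)$, so exceptional divisors lying over that open part must \emph{not} be added; discarding them requires nonnegative (canonical-type) discrepancies there, and your proposed fixes do not supply this. The openly canonical hypothesis concerns only the \emph{general} fiber, so $X^n_B\setminus D_n$ can acquire worse-than-canonical singularities along special fibers -- the paper handles exactly this by passing to the weakly semistable model and subtracting a small multiple of the discriminant $\Delta^{ss}$ in Theorem \ref{openbig}, and Proposition \ref{group} explicitly repeats that twist. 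Moreover, finite quotients of canonical singularities are in general only klt, so ``checking that stabilizers act without pseudo-reflections'' does not preserve canonicity of the open part; the paper never claims the quotient is openly canonical, but instead descends the high-order-vanishing invariant sections to a good resolution of the quotient. Without these two mechanisms your argument proves (at best) the conclusion of Theorem \ref{thm3}, and the passage to ``openly of log general type'' remains unproved.
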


For a definition of maximal variation, see Definition \ref{max}. \\

We then obtain Theorem \ref{fiber2} by means of Theorem \ref{fiber} and Theorem \ref{thm3}. More specifically, we show that there is a birational transformation from $(\widetilde{W}, \widetilde{\Delta}) \to (W, \Delta)$, such that $(\widetilde{W}, \widetilde{\Delta})$ manifests $(W, \Delta)$ as a pair openly of log general type.\\

The main tool of this paper is a recent result of Kov\'acs-Patakfalvi which  says that  given a stable family with maximal variation $f: (X,\De) \to B$ where the general fiber is Kawamata log terminal (klt), then for large $m$ the sheaf $f_*(\omega_f(\De))^m$ is \emph{big} \cite[Theorem 7.1]{kp}. Here, the divisor $\De$ denotes the divisor with lowered coefficients $(1-\epsilon)D$ for a small rational number $\epsilon$. Unfortunately the result of \cite{kp} does \emph{not} hold for log canonical pairs (see Example 7.5 of \cite{kp}). As a result, since $D$ is not assumed to be $\Q$-Cartier, one obstacle of this paper is showing that bigness of $\omega_{X^n_B}(\Den)$ for some $n$ large enough allows you to conclude bigness of $\omega_{X^n_B}(D_n)$. To do so, one must first take a $\Q$-factorial dlt modification, followed by a relative log canonical model. The ideas here are present in Propositions 2.9 and 2.15 of \cite{px}. See Remark \ref{klt} for a more detailed discussion.

Finally, we must guarantee that the fibered powers are not too singular. A priori, it is unclear if taking high fibered powers to ensure the positivity of $\omega_{X^n_B}(D_n)$ leads to a pair with good singularities. This is ensured by the following statement:\\

\begin{prop}[See Proposition \ref{sing}] Let $f: (X,D) \to B$ be a stable family with integral and log canonical general fiber over a smooth projective variety $B$. Then for all $n >0$ the fibered powers $(X^n_B, D_n)$ have log canonical singularities. \end{prop}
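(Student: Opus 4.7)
The plan is to argue by induction on $n$. The base case $n = 1$ is the standing hypothesis: stability of $f$, smoothness of $B$, and log canonicity of the integral generic fiber together force $(X, D)$ to be log canonical.

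For the inductive step, assume $(X^{n-1}_B, D_{n-1})$ is log canonical. Writing $X^n_B = X^{n-1}_B \times_B X$ with projection $p\colon X^n_B \to X^{n-1}_B$ and last-factor projection $\pi_n\colon X^n_B \to X$, one has the decomposition $D_n = p^* D_{n-1} + \pi_n^* D$. Crucially, $p$ is the base change of $f$ along the structural map $X^{n-1}_B \to B$, so $p\colon (X^n_B, \pi_n^* D) \to X^{n-1}_B$ is itself a stable family with integral log canonical generic fiber.

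The core of the argument is then a relative statement which I would isolate as a separate lemma: if $g\colon (Y, E) \to (S, \Delta_S)$ is a stable family with integral log canonical generic fiber and $(S, \Delta_S)$ is log canonical, then $(Y, E + g^* \Delta_S)$ is log canonical. Applied with $g = p$, $(S, \Delta_S) = (X^{n-1}_B, D_{n-1})$, and $E = \pi_n^* D$, this closes the induction; note also that it recovers the base case upon taking $(S, \Delta_S) = (B, 0)$.

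The main obstacle is the proof of this relative log canonicity statement. Standard inversion of adjunction (Kawakita, Koll\'ar) applies cleanly when all fibers are normal and log canonical, but a stable family can have non-normal, reducible, merely semi-log canonical fibers over a codimension-one locus in $S$. My plan for handling this is to combine Koll\'ar's gluing theory for slc pairs with a local analysis along the non-normal locus, where stable singularities have an explicit toroidal-type structure that behaves well under fibered product. A more direct alternative, closer in spirit to the rest of the paper, is to first pass to a $\Q$-factorial dlt modification of $(X, D)$, where fibered powers acquire manifestly log canonical (indeed toroidal) singularities that can be analyzed by an inductive local computation of log discrepancies, and then descend back to $(X, D)$ via a crepant birational contraction, using that log canonicity is preserved under such contractions.
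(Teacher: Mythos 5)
Your inductive skeleton is in fact the same as the paper's: Proposition \ref{sing} is proved there by viewing the fibered power as the total space of a stable (slc) family over the previous factor and applying a relative statement. The genuine gap is that all of the mathematical content of your argument sits in the ``key lemma'' you isolate (the total space of a stable family over a log canonical base, with boundary $E+g^*\Delta_S$, is log canonical), and this is precisely the point the paper does \emph{not} prove from scratch: it invokes a known log inversion-of-adjunction result, Proposition \ref{slcbase} (Lemma 2.12 of \cite{zsolt}), asserting that the total space of an slc family over an slc base is slc, and then iterates via Corollary \ref{fam}. Neither of your proposed proofs of the lemma works as stated. Stable/slc singularities do \emph{not} carry a toroidal-type structure in general (cones over varieties of general type and degenerate cusps are slc and far from toroidal); only the codimension-one part of the non-normal locus is forced to be nodal, so a ``local toroidal analysis along the non-normal locus'' has no basis beyond codimension one. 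The dlt-modification route is also problematic: a $\Q$-factorial dlt modification of the total space $(X,D)$ destroys the family structure over $B$ --- the modified fibers need not be slc and the modified family need not be flat or stable --- so its fibered power over $B$ does not admit the relative dualizing sheaf decomposition used elsewhere in the paper, and the induced map to $(X^n_B,D_n)$ is not visibly crepant; one cannot simply ``descend'' log canonicity through it.

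A second, smaller gap: your lemma concludes ``log canonical,'' which presupposes the total space is normal, whereas a stable family generally has non-normal slc fibers over a divisor in the base, so what inversion of adjunction gives directly is only \emph{semi-}log canonicity. The paper upgrades slc to lc by a separate normality argument: the total space is R1 (the general fiber is lc, hence normal, and the singular/non-normal loci of special fibers have codimension at least two in the total space) and S2 (part of the slc condition), hence normal, and a normal slc pair is lc. You should either state your key lemma in slc terms and add this R1 plus S2 step, or incorporate the normality argument explicitly; and for the key lemma itself you should cite the existing relative slc statement rather than attempt the toroidal or dlt arguments sketched above.
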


This statement also works after taking quotients by finite groups of automorphisms:\\

\begin{prop} [See Corollary \ref{quotsing}] Let $f: (X,D) \to B$ be an slc family with integral and log canonical general fiber over a smooth projective variety $B$. Then for $n$ large enough, the quotient pair  $(X^n_B/G, D_n/G)$ also has log canonical singularities. \end{prop}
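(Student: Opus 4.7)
The strategy is to bootstrap from Proposition \ref{sing} via descent of log canonical singularities under finite quotients. First, I would observe that Proposition \ref{sing} already gives that the pair $(X^n_B, D_n)$ is log canonical for every $n > 0$; the slc hypothesis on the total family is stronger than what that proposition requires, so nothing new needs to be proved upstairs.

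The $G$-equivariance of $(X,D) \to B$ induces a diagonal $G$-action on $(X^n_B, D_n)$ over $B$, so the quotient morphism $\pi: X^n_B \to X^n_B/G$ is a finite morphism between normal varieties, with $D_n$ a $G$-invariant divisor. I would define the quotient divisor $D_n/G$ via the pushforward, adjusted by ramification, so that the log ramification formula
\[
K_{X^n_B} + D_n = \pi^*\bigl(K_{X^n_B/G} + D_n/G\bigr)
\]
holds. Then the Koll\'ar--Mori descent criterion (Proposition 5.20 of \emph{Birational Geometry of Algebraic Varieties}) gives the equivalence of log canonicity of $(X^n_B, D_n)$ and $(X^n_B/G, D_n/G)$, and the corollary follows.

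The need for ``$n$ large enough'' is essentially bookkeeping: if $G$ contains components that permute factors of the fibered power or encode automorphisms of the generic fiber, then $n$ must be at least as large as the relevant rank for the $G$-action to be honestly realized on $X^n_B$ (and, if the authors want compatibility with the earlier bigness statements, one also inherits the $n$ required there). The main obstacle I anticipate is not the descent step itself but verifying that the naive geometric quotient divisor $D_n/G$ agrees with the one prescribed by the ramification formula, i.e.\ that stabilizers of generic points of components of $D_n$ act by log automorphisms and contribute no extra ramification that would spoil the coefficient bounds. This reduces to a local computation at those generic points, using $G$-equivariance of $(X,D)$ and the product structure of $X^n_B$ to ensure that along each horizontal component of $D_n$ the stabilizer acts trivially on the generic fiber of that component.
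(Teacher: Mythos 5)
Your argument is correct in substance, but it takes a genuinely different route from the paper. The paper dispenses with Corollary \ref{quotsing} in one line, asserting that it follows by ``combining Proposition \ref{group} with Proposition \ref{sing}'' --- i.e.\ it leans on the section-theoretic quotient statement (invariant pluri-log-canonical sections vanishing on the fixed locus descending to a resolution of the quotient, via Lemma 4 of \cite{dan2}) together with the log canonicity of the fibered powers, and it never performs a discrepancy computation; the phrase ``for $n$ large enough'' is simply inherited from that section-theoretic result, consistent with your observation that nothing about singularities actually requires large $n$. You instead argue directly: $(X^n_B,D_n)$ is log canonical by Proposition \ref{sing}, the quotient map $\pi$ is finite between normal varieties, and log canonicity passes between $(X^n_B,D_n)$ and $(X^n_B/G,D_n/G)$ once the boundary downstairs is chosen so that $K_{X^n_B}+D_n=\pi^*(K_{X^n_B/G}+D_n/G)$, by Koll\'ar--Mori 5.20. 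This is cleaner and more self-contained than the paper's citation, and it is the standard way to justify the claim; what the paper's route buys is only economy, since it reuses results already proved. The one soft spot in your write-up is the final reduction: you assert that stabilizers produce no ramification along components of $D_n$, so that the naive image divisor agrees with the crepant one. That is not automatic --- automorphisms of the fibers can fix divisors pointwise (quasi-reflections), and ramification divisors disjoint from $D_n$ may also occur --- but it is also not needed: the Hurwitz correction is effective and only raises coefficients toward $1$, so log canonicity of the quotient with the crepant boundary implies it for any smaller effective boundary whose log canonical divisor is $\Q$-Cartier, and the paper never fixes a more precise meaning of $D_n/G$ than the crepant one anyway. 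You should also record, for completeness, that $K_{X^n_B/G}+D_n/G$ is $\Q$-Cartier because its pullback under the finite map $\pi$ is.
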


In fact, although we do not use it in this paper, we prove:\\

\begin{prop}[See Proposition \ref{totalspace}] The total space of the fiber product of stable families over a stable base is stable. \end{prop}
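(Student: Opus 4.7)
The plan is to verify the two defining conditions of a stable pair for the total space $(\calX, \calD)$ of a fiber product $\calX := X_1\times_B X_2 \times_B \cdots \times_B X_k$ of stable families $f_i \colon (X_i, D_i) \to (B, D_B)$ over a stable pair $(B, D_B)$, equipped with the induced boundary $\calD := \sum_i \pi_i^* D_i + p^* D_B$, where $p\colon\calX \to B$ is the structure morphism and $\pi_i\colon \calX \to X_i$ are the projections. Concretely, I must show that $(\calX, \calD)$ has semi-log canonical singularities and that $K_{\calX} + \calD$ is ample and $\Q$-Cartier.

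I would proceed by induction on the number of factors $k$, so the inductive step reduces to showing that if $(X, D) \to (B, D_B)$ is a single stable family over a stable pair, then $(X, D + f^* D_B)$ is itself a stable pair. For the singularity condition, the earlier Proposition \ref{sing} and Corollary \ref{quotsing} already handle the slc property for the fibered power when the base is smooth; the additional content is absorbing the boundary $D_B$. The natural tool is inversion of adjunction together with flatness of $f$, so that the codimension-one behavior of $f^* D_B$ is controlled and the slc property can be checked at the generic points of $D_B$ by restricting to a general fiber. Once $(X, D + f^* D_B)$ is known to be slc, the next factor of the fiber product is a stable family over this new slc base, and one iterates.

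For ampleness, decompose
\[
K_\calX + \calD \;=\; \sum_{i=1}^{k} \pi_i^*\bigl(K_{X_i/B} + D_i\bigr) \;+\; p^*\bigl(K_B + D_B\bigr).
\]
Each term $\pi_i^*(K_{X_i/B} + D_i)$ is the pull-back of a relatively ample $\Q$-Cartier divisor for $f_i$, hence relatively ample for the projection $p \colon \calX \to B$, while the last summand is the pull-back of an ample $\Q$-Cartier divisor from the stable pair $(B, D_B)$. The standard fact that a relatively ample divisor plus the pull-back of an ample divisor from the base is ample then gives ampleness of $K_\calX + \calD$, and the $\Q$-Cartier property transfers term by term.

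I expect the main obstacle to be the singularity verification along the locus where several pulled-back boundaries $\pi_i^* D_i$ and $p^* D_B$ meet non-transversally, since these intersections can produce non-obvious slc strata in the total space. A careful inversion of adjunction argument, or equivalently an appeal to the Koll\'ar-style stability of slc under products and under base change by stable morphisms, should resolve this; essentially the statement reduces to a K\"unneth-type assertion for slc singularities applied fiber by fiber, combined with the inductive setup above.
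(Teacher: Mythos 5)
Your ampleness step contains a genuine gap. You write that ``a relatively ample divisor plus the pull-back of an ample divisor from the base is ample,'' but this is false as stated: relative ampleness only guarantees that $L + m\,p^*A$ is ample for $m \gg 0$, and here the coefficient of $p^*(K_B + D_B)$ is forced to be exactly $1$. (For a counterexample to your version, take $X = \PP(\calO \oplus \calO(-n)) \to \PP^1$ with $L = \calO_X(1)$ and $A = \calO_{\PP^1}(1)$: $L$ is relatively ample and $A$ ample, but $L + f^*A$ has negative degree on the negative section for $n \geq 2$.) What rescues the argument is an extra \emph{nefness} statement: if $L$ is nef and $f$-ample and $A$ is ample, then $L + f^*A = \tfrac{1}{m}(L + m f^*A) + \tfrac{m-1}{m}L$ is ample. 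The whole content of the paper's proof is establishing precisely this nefness of $\omega_f(D)$ for a stable family: it quotes the semipositivity theorem of Patakfalvi--Xu (Proposition 2.15 of \cite{px}; see also Fujino), which gives nefness when the variation is maximal, and then removes the maximal-variation hypothesis by passing to a finite cover $B' \to B$ over which the family is pulled back from one of maximal variation, using that nefness is checked on curves and is preserved under the projection formula for the finite morphism $X' \to X$. None of this appears in your proposal, and without it the decomposition $K_\calX + \calD = \sum_i \pi_i^*(K_{X_i/B}+D_i) + p^*(K_B+D_B)$ does not yield ampleness. A minor additional imprecision: each individual term $\pi_i^*(K_{X_i/B}+D_i)$ is \emph{not} $p$-ample (it is trivial along the other factors); only the sum of these terms is relatively ample over $B$.

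Your treatment of the singularity condition, via induction and log inversion of adjunction, is consistent in spirit with how the paper handles singularities elsewhere (Proposition \ref{slcbase} and Corollary \ref{fam}), though you leave the key step (``should resolve this'') unexecuted; note that the paper's own proof of this proposition addresses only the ampleness of $\omega_X(D + f^*E)$, which is where the real difficulty lies.
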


The main result we seek then follows via the above methods after applying standard tools from moduli theory.

\subsection{Previous work}

The general ideas present in this paper originated in the work of Caporaso, Harris, and Mazur, who showed in \cite{chm} that the fibered power theorem is true for families of curves of general type; i.e. families with general fiber smooth curves of genus $g \geq 2$. More precisely, they proved the following theorem: \begin{theorem}[{\cite[Theorem 1.3]{chm}}]\label{chm} Let $f: X \to B$ be a proper morphism of integral varieties, whose general fiber is a smooth curve of genus $g \geq 2$. Then for $n$ sufficiently large, $X^n_B$ admits a dominant rational map $h: X^n_B \to W$ to a positive dimensional variety of general type $W$. \end{theorem}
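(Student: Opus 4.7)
The plan is to reduce to the case of maximal variation of the family and then exploit Viehweg's weak positivity theorem (in its ``bigness of direct image'' form) to amplify positivity by taking high fiber powers. The output will be either bigness of $\omega_{X^n_B}$ for $n$ large enough, or, failing that, an Iitaka fibration from $X^n_B$ onto a positive-dimensional variety of general type.

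First I would normalize the setup: replacing $B$ by a smooth projective birational modification and $X$ by a resolution, I may assume $B$ is smooth projective and $f: X \to B$ is a flat family of stable curves of genus $g \geq 2$. The family then induces a moduli map $\mu: B \to \overline{M}_g$. Let $r = \dim \mu(B)$ be the variation of $f$. If $r < \dim B$, the Stein factorization of $\mu$ writes $B$ as a fibration over a base $B_0$ of dimension $r$, on which a descended family $X_0 \to B_0$ has maximal variation. Since fiber powers commute with base change and a dominant rational map from a fiber power of $X_0 \to B_0$ pulls back to a dominant rational map from a fiber power of $X \to B$, it suffices to treat the case where the variation is maximal.

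For the maximal-variation case, the key input is Viehweg's theorem: for $k$ sufficiently divisible, the direct image $f_*\omega_{X/B}^{\otimes k}$ is a big sheaf on $B$. By the projection formula, on the fiber power one has
\[
(f^n)_* \omega_{X^n_B/B}^{\otimes k} \cong \bigotimes_{i=1}^{n} f_*\omega_{X/B}^{\otimes k},
\]
so the positivity of the direct image is amplified by taking fiber powers. Sections of this tensor product yield sections of $\omega_{X^n_B/B}^{\otimes k}$ on the total space, which restrict non-trivially to the generic fiber $C^n$ (a product of curves of genus $\geq 2$, hence itself of general type). For $n$ large, these sections, combined with whatever positivity $B$ contributes, define a rational map from $X^n_B$ whose image is positive-dimensional and, by construction of the Iitaka fibration of the resulting big pluricanonical system, of general type.

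The principal obstacle is passing cleanly from bigness of the direct image on $B$ to positivity of a pluricanonical sheaf on the total space $X^n_B$, especially when $B$ itself is not of general type and so $f^{n*}\omega_B$ contributes nothing to push one over the bigness threshold. The resolution is that the ``horizontal'' positivity coming from the fiber direction scales with $n$ while the ``vertical'' contribution from $B$ remains fixed, so for $n$ sufficiently large the horizontal direction dominates and supplies enough sections to separate generic points and define the desired rational map. Making this scaling argument precise, and verifying that the target variety inherits the general type property rather than merely admitting positive Kodaira dimension, is the technical heart of the proof.
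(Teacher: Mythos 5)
Your overall strategy (reduce to maximal variation, then amplify Viehweg-type positivity of $f_*\omega_{X/B}^{\otimes k}$ by taking fibered powers to overcome the negativity of $\omega_B$) is the right skeleton, and it is the same skeleton used here and in \cite{chm}. But the reduction step as you state it has a genuine gap: after Stein factorizing the moduli map $\mu: B \to \overline{M}_g$ there is in general \emph{no} ``descended family $X_0 \to B_0$.'' The restriction of $X \to B$ to a fiber of $B \to B_0$ is isotrivial, but isotrivial families need not be pullbacks (twists exist), and $\overline{M}_g$ carries no universal family because of automorphisms of the fibers. The standard fix --- the one this paper uses in Section 5, following \cite{chm} --- is to pass to a finite cover of the moduli space carrying a tautological family, take a Galois cover and an equivariant resolution, and then work with the quotient of a maximal-variation family by a finite group $G$. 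This forces you to prove a stronger statement in the maximal-variation case: that a high fibered power \emph{modulo} $G$ is of general type, which is done by producing $G$-invariant pluricanonical sections vanishing to order at least $n|G|$ along the fixed locus so that they descend to a resolution of the quotient (Proposition \ref{group} here, Lemma 4 of \cite{dan2}). Your proposal never confronts this, and without it the non-maximal-variation case does not follow.

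A second, smaller gap: you pass from bigness of $\omega_{X^n_B}^{\otimes k}$ (or of its direct image) to ``the image is of general type'' without any control of the singularities of $X^n_B$. Bigness of the canonical sheaf does not imply general type on a singular variety --- the cone over a plane quintic in Section 4 is exactly the counterexample --- and fibered powers of a nodal family acquire worse singularities at each step. One must show the fibered powers have canonical (here: log canonical) singularities, as in Proposition \ref{sing}, so that pluricanonical sections lift to a resolution; resolving $X$ at the outset does not help, because the singularities you must control live on $X^n_B$, and resolving there destroys the product structure you used for the projection-formula computation. With these two ingredients added (group quotients via the tautological family, and the singularity analysis of fibered powers), your outline matches the actual proof.
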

In this same paper, the authors conjectured that this result is actually true for families of varieties of general type of arbitrary dimension. However, the authors focused only on the case of curves as their results relied upon the nice description of a compact moduli space parameterizing mildly singular objects-- the moduli space of genus $g$ stable curves,  $\overline{\calM}_g$.  After Alexeev, Koll\'ar, and Shepherd-Barron constructed  a moduli space parametrizing stable surfaces analogous to that of stable curves, Hassett showed in \cite{has} that the correlation theorem was also true for families of surfaces of general type.  Abramovich later proved the fibered power theorem for varieties of general type of arbitrary dimension, using an analogue of semistable reduction \cite{dan}.

In \cite{AM}, Abramovich and Matsuki proved a fibered power theorem for principally polarized abelian varieties, using Alexeev's compact moduli space. This was a generalization of some prior results which considered pairs of elliptic curves and their origins (see \cite{pac2}, \cite{pac}, and \cite{dan2}). Furthermore, Abramovich and Matsuki remark that the result should in fact be true for stable pairs $(X,D)$, and so we take this as the goal of this paper. 

Finally, although we hope that this result will be of interest in its own right from the viewpoint of geometry, there are numerous applications to arithmetic, namely to the study of integral points on pairs of log general type, assuming the Lang-Vojta conjecture. This approach was taken in all the above mentioned papers and we obtain similar results in this direction in  \cite{at}. \\

\subsection*{Outline}
\begin{addmargin}[45pt]{0\linewidth}
\begin{itemize}
\item[Sec. 2 (Pg. \pageref{1})] Preliminary definitions and notation,
\item[Sec. 3 (Pg. \pageref{2})] We prove $\omega_{X_B^n}(D_n)$ is big for a stable family of maximal variation with log canonical general fiber. We prove that the fibered power theorem holds for max variaiton families when the general fiber is both openly canonical and log canonical.
\item[Sec 4 (Pg. \pageref{3})] Some results on singularities, namely we analyze the singularities of fibered powers and study the affect of group quotients. We prove the fibered power theorem for log canonical general fiber in the case of max variation. 
\item[Sec 5 (Pg. \pageref{4})] We prove the full fibered power theorems by reducing to families of maximal variation.

\end{itemize}
\end{addmargin}

\subsection*{Acknowledgements} We thank Brendan Hassett for suggesting this problem. We thank Dan Abramovich for his constant support and many useful conversations. K. Ascher thanks Dori Bejleri and Patricio Gallardo for many helpful conversations, as well as S\'andor Kov\'acs and Zsolt Patakfalvi for discussing their results with him at length. We also thank them for providing us with an early draft of \cite{kp}. K. Ascher thanks the Mathematics department at Chalmers University for providing an excellent working environment.  Finally, we thank the referee for their helpful suggestions which improved the exposition of this paper.

\section{Preliminaries and Notation}\label{1}

\subsection{Birational Geometry}
\begin{defn}
A line bundle $\calL$ on a proper variety $X$ is called \textbf{big} if the global sections of $\calL^m$ define a birational map for $m > 0$. A Cartier divisor $D$ is called big if $\calO_X(D)$ is big.  
\end{defn}
From the point of view of birational geometry and the minimal model program, it has become convenient and standard to work with pairs. We define a pair $(X,D)$ to be a variety $X$ along with an effective $\R$-divisor $D = \sum d_i D_i$ which is a linear combination of distinct prime divisors. 

\begin{defn}\label{lc}
Let $(X,D)$ be a pair where $X$ is a normal variety and $K_X + D$ is $\Q$-Cartier. Suppose that there is a log resolution $f: Y \to X$ such that $$K_Y + \sum a_E E = f^*(K_X + D),$$ where the sum goes over all irreducible divisors on $Y$. We say that $(X,D)$ is:
\begin{itemize} 
\item \emph{canonical} if all $a_E \leq 0$, 
\item \emph{log canonical} (lc) if all $a_E \leq 1$, and
\item \emph{Kawamata log terminal} (klt) if all $a_E < 1$.
\end{itemize}
\end{defn}

\begin{remark} In particular, for a klt pair, the coefficients $d_i$ in the decomposition $D = \sum d_i D_i$ are all \emph{strictly} $< 1$.  Similarly, for a lc pair, the coefficients are $\leq$ 1. \end{remark}

\begin{defn} A  pair $(X,D = \sum d_i D_i)$ is \textbf{semi-log canonical} (slc) if $X$ is reduced,  $K_X + D$ is $\Q$-Cartier and:
\begin{enumerate} 
\item The variety $X$ satisfies Serre's condition S2,
\item  $X$ is Gorenstein in codimension 1, and 
\item if $\nu: \Xv \to X$ is the normalization, then the pair $(\Xv, \sum d_i \nu^{-1}(D_i) + D^{\vee})$ is log canonical, where $D^{\vee}$ denotes the preimage of the double locus on $\Xv$.
\end{enumerate}
\end{defn}

\begin{remark} Semi-log canonical singularities can be thought of as the extension of log canonical singularities to non-normal varieties. The only difference is that a log resoultion is replaced by a \emph{good semi-resolution}.  \end {remark}

\begin{defn}\label{stable} A pair $(X,D)$ of a proper variety $X$ and an effective $\Q$-divisor $D$, is a \textbf{stable pair} if:
\begin{itemize}
\item The line bundle $\omega_X(D)$ is ample and 
\item The pair $(X,D)$ has semi-log canonical singularities
\end{itemize}
\end{defn}

We assume that all of our families of pairs satisfy \emph{Koll\'ar's condition}. To be precise, we define a stable family as follows. Let $X$ be a variety and $\calF$ an $\calO_X$-module. The dual of $\calF$ is denoted $\calF^{\star} := \mathcal Hom_X (\calF, \calO_X)$ and we define the $m$-th reflexive power of $\calF$ to be the double dual (or reflexive hull) of the $m$-th tensor power of $\calF$: $$\calF^{[m]} := (\calF^{\otimes m})^{\star \star}.$$

\begin{defn} An \textbf{slc family} is a flat morphism $f: (X,D) \to B$ such that:
\begin{itemize}
\item each fiber $(X_b, D_b)$ is an slc pair,
\item $\omega_f(D)^{[m]}$ is flat, and
\item (Koll\'ar's Condition) for every base change $\tau: B' \to B$, given the induced morphism $\rho: (X_{B'}, D_{B'}) \to (X,D)$ we have that the natural homomorphism $$\rho^*(\omega_f(D)^{[m]}) \to \omega_{f'}(D)^{[m]}$$ is an isomorphism
\end{itemize}

\noindent We say that $f: (X,D) \to B$ is a \textbf{stable family} if in addition to the above, each $(X_b, D_b)$ is a stable pair. Equivalently, $K_{X_b} + D_b$ is ample for every $b \in B$. \end{defn}

\subsection{Moduli space of stable pairs}
Constructing the moduli space  of stable pairs, denoted below by $\overline{M}_h$, has been a difficult task. A discussion of the construction of the moduli space $\overline{M}_h$ is not necessary for this paper, but for sake of completeness we note that there exists a finite set of constants, which we denote by $h$, that allows for a compact moduli space. As long as the coefficients $d_i$ appearing in the divisor decomposition are all $ > \frac{1}{2}$, there are no issues and we do in fact have a well defined moduli space. There is no harm in assuming this outright, which leads to the following:

\begin{asm} The coefficients of the divisors considered are always $> \frac{1}{2}$.\end{asm}

\noindent We refer the reader to \cite{kol} or to the introduction of \cite{kp} for more details.

\subsection{Variation of Moduli}
Given a stable family $f:(X,D) \to B$, we obtain a \textit{canonical morphism}:
$$\phi: B \to \overline{M}_h$$ 
sending a point $b \in B$ to the point of the moduli space $\overline{M}_h$ of stable pairs, classifying the fiber $(X_b, D_b)$. This motivates the following definition.

\begin{defn}\label{max} A family has \textbf{maximal variation} of moduli if the corresponding canonical morphism is generically finite. \end{defn}

Equivalently, the above definition means that the family is a truly varying family, diametrically opposed to one which is \emph{isotrivial}, where the fibers do not vary at all.

\subsection{Notation} Given a morphism of pairs $f: (X,D) \to B$, we denote by $(X_B^n,D_n)$ the unique irreducible component of the $n$th fiber product of $(X,D)$ over $B$ dominating $B$. We define  $D_n$ to be the divisor $D_n := \sum_{i=1}^n \pi_i^*(D)$ where the maps $\pi_i: (X^n_B, D_n) \to B$ denote the projections onto the $i$th factors. We denote by $f_n$ the maps $f_n: (X^n_B, D_n) \to B$. Finally, we denote by $\De$ the divisor $(1-\epsilon)D$ and by $\Den$ the sum $\Den := \sum_{i=1}^n \pi_i^*(\De)$. \\

\section{Positivity of the relative anti-canonical sheaf} \label{2}
Recall that to prove that the pair $(X^n_B, D_n)$ is a \emph{pair of log general type}, we must show that

\begin{enumerate}
\item[(a)]  $\omega_{X^n_B}(D_n)$ is big and 
\item[(b)] The pair $(X^n_B, D_n)$ has log canonical singularities. 
\end{enumerate}

We also remind the reader that we will demonstrate in Section 4 that Theorem \ref{thm3} implies Theorem \ref{fiber}. Therefore, in this section we will assume that the variation of our family is maximal. More precisely, the goal of this section is to prove the following proposition, tackling (a) of the above definition:

\begin{prop}\label{lcbig} Let $f:(X,D) \to B$ be a stable family with maximal variation over a smooth, projective variety $B$ with integral and log canonical general fiber, then for $n$ sufficiently large, the sheaf $\omega_{X^n_B}(D_n)$ is big. \end{prop}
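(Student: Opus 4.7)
The plan is to reduce to the klt setting by lowering the boundary coefficients so that the Kovács–Patakfalvi positivity theorem cited in the introduction applies, then to transfer bigness of the base pushforward to bigness on the total space via a Viehweg-style fibered power argument, and finally to upgrade the conclusion from the softened boundary $\Den$ back to $D_n$.

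First, I would replace the boundary by $\De=(1-\epsilon)D$ for a small rational $\epsilon>0$. Since the general fiber $(X_b,D_b)$ is log canonical with all coefficients in $(1/2,1]$, the softened pair $(X_b,\De_b)$ is klt, while the maximal variation of the family and the relative ampleness of $\omega_f(\De)$ on fibers are unchanged. The hypothesis of \cite{kp}, Thm.~7.1 is therefore satisfied, and for some $m\gg 0$ the sheaf $f_*\bigl(\omega_f(\De)^{[m]}\bigr)$ is big on $B$.

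Next I pass to the fibered power $f_n:(X^n_B,\Den)\to B$. Flat base change together with the identification of the relative canonical sheaf as an external tensor product, i.e.\ $\omega_{f_n}(\Den)^{[m]}$ agreeing up to reflexive hull with $\boxtimes_{i=1}^n\omega_f(\De)^{[m]}$, yields
$$f_{n,*}\bigl(\omega_{f_n}(\Den)^{[m]}\bigr)\;\cong\;\bigotimes_{i=1}^n f_*\bigl(\omega_f(\De)^{[m]}\bigr).$$
Tensor products of big sheaves on a projective variety remain big, and the bigness of the right-hand side grows arbitrarily strong as $n$ increases; in particular, for $n$ large enough it dominates any fixed line bundle on $B$, including $\omega_B^{-1}$. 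Combined with the $f_n$-ampleness of $K_{X^n_B/B}+\Den$ (which restricts on each fiber $X^n_b$ to $\sum_i\pi_i^*(K_{X_b}+\De_b)$, a sum of ample divisors pulled back by the projections), the Viehweg fibered power trick transfers base positivity into global positivity and yields bigness of $\omega_{X^n_B}(\Den) \cong \omega_{f_n}(\Den)\otimes f_n^*\omega_B$ on $X^n_B$ for all $n$ sufficiently large.

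Finally, to upgrade to $D_n$, I note that $D_n-\Den=\epsilon D_n$ is effective, so bigness of $\omega_{X^n_B}(\Den)$ forces bigness of $\omega_{X^n_B}(D_n)$ in the $\Q$-divisor sense. Because $D$ is not assumed $\Q$-Cartier, one must interpret this step after first passing to a $\Q$-factorial dlt modification and then to a relative log canonical model of $(X^n_B, D_n)$, in the spirit of Propositions 2.9 and 2.15 of \cite{px}; bigness is preserved under these birational steps. The main obstacle I foresee is the Viehweg transfer step: extracting bigness on $X^n_B$ from bigness of the pushforward on $B$ without any sign hypothesis on $\omega_B$ is exactly what forces $n$ to be large, so that $(f_*\omega_f(\De)^{[m]})^{\otimes n}$ swamps $f_n^*\omega_B^{-1}$. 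The secondary subtlety is the non-$\Q$-Cartier nature of $D_n$, which is handled by the dlt modification / relative log canonical model route referred to above.
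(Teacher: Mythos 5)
Your middle step (the Hassett--Viehweg fibered power computation transferring bigness of $f_*\bigl(\omega_f(\De)^{[m]}\bigr)$ on $B$ to bigness of $\omega_{X^n_B}(\Den)$ for $n\gg 0$) is essentially the paper's Proposition \ref{kltbig}, but the reduction to the klt case at the start is where the argument breaks. You apply \cite[Theorem 7.1]{kp} to the family $(X,\De)\to B$ with $\De=(1-\epsilon)D$, claiming the softened fibers are klt and the family is still stable. Neither claim is justified: since $D$ is not assumed $\Q$-Cartier, $K_X+\De=(K_X+D)-\epsilon D$ need not be $\Q$-Cartier, so $(X,\De)\to B$ is not known to be a stable family (nor is $\omega_f(\De)$ a $\Q$-line bundle on which relative ampleness makes sense); and even where the $\Q$-Cartier issue is absent, lowering the boundary of an lc pair does not produce a klt pair in general --- the general fiber $X_b$ is only assumed log canonical, so it can have strictly lc singularities (non-klt centers) away from $D_b$, and these survive untouched in $(X_b,(1-\epsilon)D_b)$. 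This is exactly the situation in which the Kov\'acs--Patakfalvi theorem is known to fail (their Example 7.5), so the input to your fibered-power step is simply not available for the original family.

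The fix is the one the paper carries out, and the order of operations matters: the dlt modification and relative log canonical model are not a postscript to the coefficient-raising step but the very first move, applied to $(X,D)\to B$ rather than to $(X^n_B,D_n)$. One takes a $\Q$-factorial dlt modification $\widetilde{p}\colon \widetilde{Z}\to X$ with $K_{\widetilde{Z}}+\widetilde{\Delta}=\widetilde{p}^*(K_X+D)$; on a $\Q$-factorial dlt model all non-klt centers lie in coefficient-one boundary components, so $(\widetilde{Z},\widetilde{\Delta}_\epsilon)$ genuinely is klt, and then the relative log canonical model over $X$ yields a new \emph{stable} family $(Z,\Delta_\epsilon)\to B$ with klt general fiber and maximal variation (Proposition \ref{qfact}). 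The klt bigness statement is applied to this new family, and bigness is transported back: pluri-log-canonical forms pull back from $Z^n_B$ to $\widetilde{Z}^n_B$, the coefficients are raised there --- which is the easy direction precisely because $\epsilon\widetilde{\Delta}$ is effective and $\Q$-Cartier on the $\Q$-factorial $\widetilde{Z}$ --- and finally sections descend to $X^n_B$ because $\widetilde{p}$ is crepant. Your proposal instead invokes a dlt model of the fibered power $(X^n_B,D_n)$ only at the end; this does not repair the unjustified application of \cite{kp} at the start, and it is unclear how a modification performed after taking fibered powers would reconnect with the pushforward-bigness input on $B$. In short: the genuine obstruction created by $D$ failing to be $\Q$-Cartier is not the final raising of coefficients but the fact that the lowering cannot be performed on $X$ itself, and your argument performs it there.
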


As mentioned in the introduction, we will prove the above proposition by means of the following slightly weaker statement:

\begin{prop}\label{kltbig} Let $f:(X,\De) \to B$ be a stable family with maximal variation over a smooth, projective variety $B$ with klt general fiber, then for $n$ sufficiently large, the sheaf $\omega_{X^n_B}(\Den)$ is big. \end{prop}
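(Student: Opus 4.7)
The plan is to apply the Kov\'acs--Patakfalvi bigness theorem \cite[Theorem 7.1]{kp} downstairs on $B$ and then propagate the resulting positivity up to the total space $X_B^n$ by taking $n$ sufficiently large. Since $f:(X,\De)\to B$ is a stable family of maximal variation whose general fiber is klt, that theorem immediately produces an integer $m_0 > 0$ for which the coherent sheaf
$$
E \; := \; f_*\omega_f(\De)^{m_0}
$$
is big on $B$.

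Next I would identify the analogous pushforward for the fibered power with a tensor power of $E$. Writing $X_B^n$ as the iterated fiber product $X\times_B X\times_B \cdots \times_B X$ and applying flat base change inductively, together with Koll\'ar's condition (which propagates from $f$ to $f_n$ and ensures that reflexive powers pull back correctly through arbitrary base change), one should obtain a natural identification
$$
(f_n)_*\omega_{f_n}(\Den)^{m_0} \; \cong \; E^{\otimes n}.
$$
The klt hypothesis guarantees that $\omega_f(\De)$ is $\Q$-Cartier, so for $m_0$ sufficiently divisible this is an honest isomorphism of line bundle pushforwards rather than merely an isomorphism up to saturation.

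Finally, I would turn horizontal bigness into bigness upstairs. Since $E$ is big, some symmetric power $\mathrm{Sym}^{k_0}E$ contains an ample line bundle $A$ as a subsheaf, and the tensor powers of $E$ accumulate positivity rapidly enough that for $n$ sufficiently large the twisted sheaf $\omega_B^{m_0}\otimes E^{\otimes n}$ contains a big line bundle on $B$. Because $\omega_{f_n}(\Den)$ is $f_n$-relatively ample (each fiber $(X_b^n,(D_n)|_{X_b^n}) = (X_b,D_b)^n$ is again a stable pair), this horizontal bigness of $(f_n)_*\omega_{X_B^n}(\Den)^{m_0} = \omega_B^{m_0}\otimes E^{\otimes n}$ upgrades to bigness of $\omega_{X_B^n}(\Den) = f_n^*\omega_B\otimes\omega_{f_n}(\Den)$ on $X_B^n$ in a standard way. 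The principal obstacle will be the second step: since $\omega_f(\De)$ is only a $\Q$-line bundle and the fibers of $f$ need not be smooth, the compatibility of reflexive hulls with iterated fibered products must be checked carefully, and this is precisely where Koll\'ar's condition is indispensable. Once the product formula is in hand, the remaining arguments are routine Viehweg-style positivity bookkeeping.
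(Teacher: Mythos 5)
Your proposal is correct in outline and shares its skeleton with the paper's argument: both invoke the Kov\'acs--Patakfalvi theorem on $B$, both identify the pushforward of (reflexive) powers of $\omega_{f_n}(\Den)$ with the $n$-fold tensor power of $f_*\omega_f(\De)^{[m]}$ twisted by $\omega_B^m$ (the paper's Lemma \ref{gorenstein} plus the projection formula), and both defeat the possible negativity of $\omega_B$ by taking $n$ large. Where you genuinely diverge is the endgame. The paper follows Hassett's Proposition 5.1: it fixes $H$ with $H\otimes\omega_B$ very ample, chooses a single $n$ so that $\bigl(f_*\omega_f(\De)^m\bigr)^{[n]}\otimes H^{-m}$ is generically globally generated for \emph{all} admissible $m$ simultaneously (using Hassett's Proposition 5.2 to compare reflexive tensor powers with symmetric powers), and then counts sections: rank of order $m^{n\dim X_\eta}$ times the $m^{b}$ sections of $(H\otimes\omega_B)^m$ gives of order $m^{n\dim X_\eta+b}$ sections of $\omega_{X^n_B}(\Den)^{[m]}$. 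You instead work at a single level $m_0$: bigness of $E=f_*\omega_f(\De)^{m_0}$ gives an ample line bundle inside $S^{[k]}E$, and in characteristic $0$ the symmetric power is a direct summand of the tensor power, so high tensor powers of $E$ contain increasingly ample line bundles and $\omega_B^{m_0}\otimes E^{\otimes n}$ contains a big line bundle $M$ for $n\gg 0$; a nonzero map $M\to (f_n)_*\omega_{X^n_B}(\Den)^{[m_0]}$ then yields $\omega_{X^n_B}(\Den)^{[m_0]}\geq f_n^*M$, and since $M$ is ample plus effective after a power while $\omega_{X^n_B}(\Den)^{[m_0]}\otimes f_n^*A^{t}$ is ample for $t\gg 0$ by relative ampleness of $\omega_{f_n}(\Den)$, a high power of $\omega_{X^n_B}(\Den)$ is ample plus effective, hence big. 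This is a legitimate and somewhat cleaner finish: it avoids the asymptotic section count and the uniformity in $m$, at the cost of using relative ampleness (available here since the family is stable) and the characteristic-$0$ splitting. Two small cautions: the identification $(f_n)_*\omega_{f_n}(\Den)^{[m_0]}\cong E^{\otimes n}$ should only be asserted generically -- the paper works with a map $\bigl(f_*\omega_f(\De)^{m}\bigr)^{[n]}\to\bigl(f_*\omega_f(\De)^{[m]}\bigr)^{n}$ that is an isomorphism at the generic point of $B$ -- but this suffices for your purpose, since a map from the line bundle $M$ defined on a big open subset of $B$ extends to the reflexive pushforward; and $m_0$ must be chosen so that $\omega_f(\De)^{[m_0]}$ is locally free, which the stable-family hypothesis provides, exactly as in the paper.
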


Our proof of Proposition \ref{kltbig} requires the recent Theorem of Kov\'acs and Patakfalvi:

\begin{theorem}[{\cite[Theorem 7.1, Corollary 7.3]{kp}}]\label{kp} If $f: (X,\De) \to B$ is a stable family with maximal variation over a normal, projective variety $B$ with klt general fiber, then $f_*(\omega_f(\De)^m)$ is big for $m$ large enough. Moreover, $\omega_f(\De)$ is big. \end{theorem}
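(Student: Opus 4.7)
The plan is to follow Viehweg's general strategy (weak positivity plus an ampleness lemma) adapted to the setting of pairs with klt fibers. I would reduce the theorem to two separate inputs: (i) a \emph{weak positivity} statement for the pushforward sheaves $f_{*}\omega_{f}(\De)^{[m]}$, which holds regardless of variation; and (ii) a mechanism that promotes weak positivity to bigness when the moduli map has generic finiteness, i.e.\ an \emph{ampleness lemma} in the style of Koll\'ar. Once bigness of $f_{*}\omega_{f}(\De)^{m}$ is established on $B$, I would deduce the ``moreover'' statement by combining the positivity of sections coming from $B$ with the fiberwise positivity of $\omega_{f}(\De)|_{X_{b}}$ via a Viehweg-type fiber product trick.

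For step (i), the plan is to prove that for $m$ sufficiently divisible, the reflexive pushforward $f_{*}\omega_{f}(\De)^{[m]}$ is weakly positive on a big open subset of $B$. The starting point is Koll\'ar's condition on the stable family, which guarantees that $\omega_{f}(\De)^{[m]}$ is flat over $B$ and commutes with arbitrary base change. One then invokes a Hodge-theoretic / variation of mixed Hodge structure argument (in the spirit of Kawamata, Viehweg, and Fujino) adapted to the log pair: after a log resolution the sheaf $f_{*}\omega_{f}(\De)^{[m]}$ can be expressed, up to correction by a klt discrepancy term, as a direct image of a pluri-log-canonical sheaf of an snc pair, and hence inherits weak positivity. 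The klt hypothesis on the general fiber is essential at this stage: it ensures that the discrepancy terms stay effective and that the Hodge-theoretic input applies; without klt one meets exactly the obstruction recorded in Example 7.5 of \cite{kp}.

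For step (ii), the plan is to use the fact that the classifying map $\phi \colon B \to \overline{M}_{h}$ induced by the stable family is generically finite (maximal variation). Applying Koll\'ar's ampleness lemma to a suitable symmetric or determinantal construction built from $f_{*}\omega_{f}(\De)^{[m]}$, one shows that the associated moduli-theoretic quotient pulled back along $\phi$ is big on $B$. Combined with the weak positivity from step (i), this forces $f_{*}\omega_{f}(\De)^{[m]}$ itself to be big on $B$ for $m$ large and divisible enough.

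Finally, for bigness of $\omega_{f}(\De)$ on $X$, I would argue as follows. Bigness of $f_{*}\omega_{f}(\De)^{m}$ on $B$ produces, after multiplying by a large power, many sections of $\omega_{f}(\De)^{m}$ on $X$ that separate fibers of $f$. Since each fiber $(X_{b}, \De_{b})$ is a stable klt pair, $\omega_{X_{b}}(\De_{b})$ is ample and its pluri-sections separate points on the generic fiber. A Viehweg fiber product argument combining these two families of sections — or equivalently, comparing the numerical dimensions of $\omega_{f}(\De)$ along the fibers and along horizontal directions — yields that $\omega_{f}(\De)$ is big on $X$. The main obstacle I expect in the whole proof is step (i): proving weak positivity for reflexive pluri-log-canonical pushforwards of stable families with klt fibers requires a careful Hodge-module or semipositivity argument that handles the boundary divisor $\De$ and the failure of $\Q$-Cartier assumptions outside klt, and it is precisely where the klt hypothesis cannot be weakened.
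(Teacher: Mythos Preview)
The paper does not prove this statement at all: Theorem~\ref{kp} is quoted verbatim from Kov\'acs--Patakfalvi \cite[Theorem~7.1, Corollary~7.3]{kp} and used as a black box input to the proofs of Propositions~\ref{kltbig} and~\ref{lcbig}. There is therefore no ``paper's own proof'' to compare against; your proposal is a sketch of how one might reprove the cited result, not of anything appearing in this paper.

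That said, your outline is broadly consistent with how the Kov\'acs--Patakfalvi argument actually goes: weak positivity of the reflexive pushforwards $f_{*}\omega_{f}(\De)^{[m]}$ (via semipositivity/Hodge-theoretic input for klt fibers) combined with a Koll\'ar-type ampleness lemma under maximal variation to upgrade to bigness, and then a fiber-product/section-counting step to get bigness of $\omega_{f}(\De)$ on the total space. The klt hypothesis is indeed essential at the semipositivity step, as the paper itself remarks (see Remark~\ref{klt} and the reference to Example~7.5 of \cite{kp}). So your plan is reasonable as a roadmap to the cited theorem, but it is not something this paper undertakes, and you should simply cite \cite{kp} rather than reprove it.
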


Let $S^{[n]}$ denote the reflexive hull of the $n$th symmetric power of a sheaf. Then the above theorem is equivalent to saying that, under the hypotheses, for any ample line bundle $H$ on $B$ there exists an integer $n_0$ such that  \begin{equation}S^{[n_0]}(f_*(\omega_f(\De)^m) \otimes H^{-1}\end{equation} is generically globally generated. We desire to show that this implies Proposition \ref{kltbig}.\\

The proof of this statement essentially follows from Proposition 5.1 of \cite{has}, but we include the proof for completeness to show how it extends to the case of pairs.  We begin with a lemma:

\begin{lemma}\label{gorenstein} Let $f: (X,D) \to B$ be a stable family over a smooth projective variety $B$ such that the general fiber has log canonical singularities. Then for all $n > 0$, the following formula holds: $$\omega_{X^n_B}(D_n)^{[m]} = \pi_1^* \omega_f(D)^{[m]} \otimes \cdots \otimes \pi_n^* \omega_f(D)^{[m]} \otimes f_n^* \omega_B^m.$$ \end{lemma}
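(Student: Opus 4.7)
The plan is to verify the asserted identity on an open subset $U \subset X^n_B$ whose complement has codimension at least two, where both sides of the equation reduce to line bundles, and then extend the isomorphism globally using reflexivity. First I would check that both sides are reflexive rank-one sheaves on $X^n_B$: the left-hand side by construction as a double dual, and the right-hand side, interpreted as the reflexive hull of the naive tensor product, agrees with the naive tensor product wherever all factors are line bundles (Kollár's condition on the stable family ensures the pullbacks $\pi_i^* \omega_f(D)^{[m]}$ behave well under base change). Next I would observe that $X^n_B$ is S2: the morphism $f_n: X^n_B \to B$ is flat with fibers $(X_b)^n$, which are products of S2 slc varieties and hence S2, and $B$ is smooth.

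To construct $U$, I would take $X^{\mathrm{sm}} \subset X$ to be the smooth locus of the morphism $f$. On each lc general fiber this has codimension at least two, since lc singularities are normal and hence regular in codimension one. The remaining slc fibers lie over a proper closed subset of $B$, and the contribution of their non-smooth loci (possibly of codimension one within the fiber) to the total space $X$ has codimension at least two once combined with the codimension of the base locus. Setting $U := (X^{\mathrm{sm}})^n_B$, the projections $\pi_i: X^n_B \to X$ are flat, being base changes of the flat morphism $X^{n-1}_B \to B$, so the complement of $U$ in $X^n_B$ also has codimension at least two.

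On $U$, which is smooth over the smooth base $B$, I would use the standard decomposition of the relative cotangent bundle for fibered products of smooth morphisms to write $\omega_{U/B} \cong \bigotimes_i \pi_i^* (\omega_f|_{X^{\mathrm{sm}}})$, combine with $\omega_U \cong \omega_{U/B} \otimes f_n^* \omega_B$, twist by the Cartier divisor $D_n|_U = \sum_i \pi_i^* D|_U$ (Cartier since $U$ is smooth), and raise to the $m$-th tensor power, which on the line bundle locus $U$ coincides with the $m$-th reflexive power, to produce the claimed identity on $U$. Reflexivity of both sides on the S2 scheme $X^n_B$ then extends the isomorphism across the codimension two complement, completing the argument. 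The main obstacle is the codimension accounting for the non-smooth locus of $f$: one must carefully combine the fiberwise codimension bounds with the codimension of the base locus supporting special slc fibers, and one must also ensure that the reflexive hull on the right-hand side truly agrees with the naive tensor product on $U$ so that the elementary computation on $U$ determines the global reflexive isomorphism.
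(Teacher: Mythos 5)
Your overall strategy coincides with the paper's: reduce to an open subset of $X^n_B$ on which every sheaf in sight is invertible, verify the formula there by the standard product decomposition of relative dualizing sheaves, and extend across a complement of codimension at least two using reflexivity. Your codimension count (singular loci of the lc general fibers have codimension two in those fibers; non-smooth loci of the special fibers are codimension one in the fiber but gain an extra codimension from the base) is exactly the paper's, and your explicit checks that $X^n_B$ is S2 and that the projections $\pi_i$ are flat only make that step more careful than the original.

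There is, however, one genuine gap: your treatment of the right-hand side. The lemma asserts equality with the honest tensor product $\pi_1^*\omega_f(D)^{[m]}\otimes\cdots\otimes\pi_n^*\omega_f(D)^{[m]}\otimes f_n^*\omega_B^m$, and the codimension-two extension argument can only identify the left side with a \emph{reflexive} sheaf agreeing with it on $U$. By reinterpreting the right-hand side as the reflexive hull of the naive tensor product you prove only the weaker statement $\omega_{X^n_B}(D_n)^{[m]}=\bigl(\pi_1^*\omega_f(D)^{[m]}\otimes\cdots\otimes f_n^*\omega_B^m\bigr)^{\star\star}$, and your remark that the hull agrees with the naive product ``wherever all factors are line bundles'' holds only on $U$, so it does not close the gap globally. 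The missing ingredient, which is the pivot of the paper's proof, is the choice of $m$: since $f$ is a stable family, there is an $m$ such that $\omega_f(D)^{[m]}$ restricts to a line bundle on every fiber, and hence (by flatness and Koll\'ar's condition) is locally free on all of $X$; for such $m$ the right-hand side is a tensor product of line bundles, hence reflexive, and your computation on $U$ then yields the formula exactly as stated. This is not merely cosmetic: the later use of the lemma in the proof of Proposition \ref{kltbig} applies the projection formula to the locally free form of the right-hand side, so the reflexive-hull version would not suffice downstream. (A smaller point of the same kind: since $D$ is a $\Q$-divisor, one should twist by $mD_n|_U$ for $m$ clearing denominators rather than by $D_n|_U$ itself, though the paper is equally brief about this.)
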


\begin{proof} Recall that the relative dualizing sheaf satisfies the following equation: $$\omega_{f_n}(D_n) = \pi_1^* \omega_f(D) \otimes \cdots \otimes \pi_n^* \omega_f(D)$$ where $\pi_j$ denotes the projection $\pi_j: X^n \to X$ to the $j$th factor. Since $B$ is smooth we obtain:  $$ \omega_{X^n_B}(D_n)^{[m]} = \omega_{f_n}(D_n)^{[m]} \otimes f_n^* \omega_B^m.$$ Since $f: (X,D) \to B$ is a stable family, there exists an integer $m$ such that for all $b \in B$, the sheaf $\omega_{f}(D)^{[m]}|_{X_b}$ is locally free. Moreover, since this sheaf is locally free on each fiber, $\omega_f(D)^{[m]}$ is also locally free for this $m$. We claim that the following holds:$$ \omega_{X^n_B}(D_n)^{[m]} = \pi^*_1{\omega_f(D)}^{[m]} \otimes \cdots \otimes \pi^*_n{\omega_f(D)}^{[m]} \otimes f_n^* \omega_B^m.$$ Both sides of the equation are reflexive -- the left hand side by construction, and the right hand side because it is the tensor product of locally free sheaves. Therefore, to prove the equivalence, we must show the two sides agree on an open set whose complement has codimension at least two. Consider the locus consisting of both the general fibers, which are log canonical and hence $\Q$-Gorenstein, as well as  the nonsingular parts of the special fibers. Note that the complement of this locus is of codimension at least two, because the singular parts of the special fiber are of codimension one, thus of at least codimension two in the total space.

\end{proof}

We will now give a proof of Proposition \ref{kltbig}. 

\begin{proof}[Proof of Proposition \ref{kltbig}] Let $m \in \Z$ be such that both $\omega_{f}(\De)^{[m]}$ is locally free and $f_*(\omega_f(\De)^m)$ is big. First note that for $n$ large enough, the sheaf $$\big(f_*(\omega_f(\De)^m)\big)^{[n]} \otimes H^{-1}$$ is generically globally generated. This follows since by Proposition 5.2 of \cite{has}, for a $r$-dimensional vector space $V$, each irreducible component of the reflexive hull of the $m$th tensor power of $V$,  is a quotient of a representation $S^{[q_1]}(V) \otimes \cdots \otimes S^{[q_k]}(V)$, where $k = r!$. Using this, we will prove that $\omega_{X^n_B}(\Den)$ is big for large $n$. To do so, it suffices to show that there are on the order of $m^{n\dim X_{\eta}+b}$ sections of $\omega_{X^n_B}(\De)^{[m]}$ where $b = \dim B$, and $X_{\eta}$ denotes the general fiber. 

By Lemma \ref{gorenstein}, $$\omega_{X^n_B}(\Den)^{[m]} = \pi^*_1{\omega_f(\De)}^{[m]} \otimes \cdots \otimes \pi^*_n{\omega_f(\De)}^{[m]} \otimes f_n^* \omega_B^m.$$ The sheaf $\omega_f(\De)$ has good positivity properties-- it is big by Corollary 7.3 of \cite{kp}, but the sheaf $\omega_B$ is somewhat arbitrary and could easily prevent $\omega_{X^n_B}(\Den)$ from being big. However, taking high enough powers of $X$ allows the positivity of $\omega_{f}(\De)$ to overcome the possible negativity of $\omega_B$. 

Applying $(f_n)_*$ gives, via the projection formula:$$ (f_n)_*(\omega_{X^n_B}(\Den)^{[m]}) =  \big(f_* (\omega_f(\De)^{[m]})\big)^n \otimes \omega_B^m  $$  which is also a reflexive sheaf by Corollary 1.7 of \cite{hart}. More specifically, it is the push forward of a reflexive sheaf under a proper dominant morphism. Then the inclusion map $\omega_f(\De)^m \to \omega_f(\De)^{[m]}$ induces a map of reflexive sheaves:$$\big(f_* \omega_f(\De)^m\big)^{[n]} \to \big(f_* \omega_f(\De)^{[m]}\big)^n = (f_n)_*( \omega_{X^n_B}(\Den)^{[m]}) \otimes \omega_{B}^{-m}$$

which is an isomorphism at the generic point of $B$.\\

Let $H$ be an invertible sheaf on $B$ such that $H \otimes \omega_B$ is very ample. Then we can choose $n$ so that $\big(f_*\omega_f(\De)^m\big)^{[n]} \otimes H^{-m}$ is generically globally generated for all admissible values of $m$. But then$$\big(f_*\omega_f(\De)^m\big)^{[n]} \otimes H^{-m} = (f_n)_*(\omega_{X^n_B}(\Den)^{[m]}) \otimes (H \otimes \omega_B)^{-m}$$ 

is also generically globally generated for the same $m$.

This sheaf has rank on the order of $m^{n\dim X_{\eta}}$ so there are at least this many global sections. By our assumption on $H$, we have that $(H \otimes \omega_B)^m$ has on the order of $m^b$ sections varying horizontally along the base $B$.  By tensoring, we obtain that the sheaf $$(f_n)_* \left(\omega_{X^n_B}(\Den)\right)^{[m]}$$  has on the order of $m^{n\dim X_{\eta}+b}$ global sections, and therefore $\omega_{X^n_B}(\Den)$ is big.\end{proof}

\begin{remark}\label{klt} The above proposition assumed that the general fiber $(X_b, D_b)$ had klt singularities, but to prove Theorem 1.1 as stated, we must allow the general fiber to have log canonical singularities. Unfortunately, we cannot just raise the coefficients of $D$ so that the pair has log canonial singularities, via twisting by $\epsilon D$ to conclude that $\omega_{X^n_B}(D_n)$ is also big. This is because we do \emph{not} know that the divisor $D$ is $\Q$-Cartier. We remedy this situation with a $\Q$-factorial divisorial log terminal (dlt) modification (see Section 1.4 of \cite{sing} for an overview of dlt models), as explained below. \end{remark} 

First, the definition of a dlt pair.

\begin{defn} Let $(X,D)$ be a log canonical pair such that $X$ is normal an $D = \sum d_i D_i$ is the sum of distinct prime divisors. Then $(X,D)$ is \textbf{divisorial log terminal} (dlt) if there exists a closed subset $Z \subset X$ such that: \begin{enumerate} \item $X \setminus Z$ is smooth and $D|_{X \setminus Z}$ is a snc divisor \item If $f: Y \to X$ is birational and $E \subset Y$ is an irreducible divisor such that $\textrm{center}_X E \subset Z$ then the discrepancy $a(E, X, D) < 1$ \end{enumerate}\end{defn}

See Definition 2.25 in \cite{km} for a definition of the discrepancy of a divisor $E$ with respect to a pair $(X,D)$.

Roughly speaking, a pair $(X,D)$ is dlt if it is log canonical, and it is simple normal crossings at the places where it is not klt. The following theorem of Hacon guarantees the existence of dlt modifications.

\begin{theorem}{\cite[Theorem 3.1]{kk}}\label{dlt} Let $(X, D)$ be a pair of a projective variety and a divisor $D = \sum d_i D_i$ with coefficients $0 \leq d_i \leq 1$, such that $K_X + D$  is $\Q$-Cartier. Then $(X,D)$ admits a $\Q$-factorial minimal dlt model $f^{min}: (X^{min}, D^{min}) \to (X,D)$. \end{theorem}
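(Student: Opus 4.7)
The plan is to construct the minimal $\Q$-factorial dlt modification by starting from a log resolution of $(X,D)$ and then running an appropriate relative MMP that contracts the exceptional divisors where the pair is strictly better than log canonical, while retaining those exceptional divisors whose discrepancy already forces them to appear on every dlt model.

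First I would fix a log resolution $g \colon Y \to X$ of $(X,D)$, let $\{E_i\}$ denote the $g$-exceptional prime divisors, and set $D_Y := g^{-1}_* D + \sum_i E_i$. Since $(Y, D_Y)$ is log smooth, it is automatically $\Q$-factorial and dlt. The discrepancy identity
$$K_Y + D_Y \;=\; g^*(K_X + D) + \sum_i e_i E_i, \qquad e_i := 1 + a(E_i, X, D),$$
partitions the exceptional divisors into the contractible ones with $e_i > 0$ (the klt-like places) and the necessary ones with $e_i \leq 0$ (the lc and non-lc places), the latter of which must appear on the final dlt model with coefficient $1$. Note that because $0 \leq d_i \leq 1$ for the coefficients of $D$, the strict transform $g^{-1}_*D$ has all coefficients in $[0,1]$, so $D_Y$ is a genuine dlt boundary.

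Next I would run a $(K_Y + D_Y)$-MMP over $X$ with scaling by an ample divisor on $Y$. At each step the extremal ray contracted is one on which $K_Y + D_Y$ is negative relative to $g$, which by the discrepancy identity is measured against $\sum_i e_i E_i$; standard preservation lemmas then imply that $\Q$-factoriality and the dlt property persist throughout the program, and that only exceptional divisors with $e_i > 0$ ever get contracted, since dlt-ness forbids contracting a divisor whose log discrepancy is already extremal. When the MMP terminates with $f^{\min} \colon X^{\min} \to X$, setting
$$D^{\min} := (f^{\min})^{-1}_* D + \sum_{e_i \leq 0} (f^{\min})^{-1}_*(E_i)$$
produces a $\Q$-factorial dlt pair mapping birationally to $(X,D)$, and minimality follows because any further contraction would have to contract a divisor with $e_i \leq 0$, which would violate the dlt condition on the output.

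The main obstacle is termination of this relative MMP in full generality: because $(X,D)$ is allowed to be non-lc, the divisor $\sum_i e_i E_i$ is neither effective nor anti-effective, so termination of the usual flavor for effective log canonical pairs does not apply out of the box. I would overcome this by perturbing $D_Y$ to $D_Y - \delta \sum_{e_i \leq 0} E_i$ for a small $\delta > 0$; this keeps the pair klt, makes the perturbed log canonical divisor pseudo-effective over $X$, and puts us in the setting of the existence theorems of Birkar--Cascini--Hacon--McKernan, which yield a log canonical model for klt pairs of log general type over a quasi-projective base. Letting $\delta \to 0$ and invoking special termination for dlt flips to ensure the MMP steps stabilize as the perturbation is removed then produces the desired $X^{\min}$, completing the construction.
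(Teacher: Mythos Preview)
The paper does not prove this theorem at all: it is stated as a citation of Hacon's result \cite[Theorem 3.1]{kk} and used as a black box, with the only commentary being the single sentence ``The upshot here is that, starting with a log canonical pair $(X,D)$ we can obtain a model which is dlt and $\Q$-factorial.'' There is nothing in the paper to compare your argument against.

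That said, your sketch follows the standard construction one finds in the literature (and presumably in the cited reference): take a log resolution, add all exceptional divisors with coefficient $1$ to the boundary, and run a $(K_Y+D_Y)$-MMP over $X$ with scaling, invoking BCHM for termination after a klt perturbation. The outline is correct in spirit. Two points deserve tightening. First, your claim that ``only exceptional divisors with $e_i>0$ ever get contracted, since dlt-ness forbids contracting a divisor whose log discrepancy is already extremal'' is not quite the right reason; the correct argument is that the MMP is $(K_Y+D_Y)$-negative over $X$, hence $\sum e_i E_i$-negative on each contracted ray, and negativity of contraction then forces the divisors with $e_i\le 0$ to survive. Second, your termination step in the non-lc case (letting $\delta\to 0$ and invoking special termination) is imprecise: the usual route is to observe that for a fixed small $\delta$ the $(K_Y+D_Y)$-MMP is simultaneously a $(K_Y+D_Y-\delta\sum_{e_i\le 0}E_i)$-MMP because both divisors are numerically proportional modulo $g^*(K_X+D)$ over $X$, and then BCHM gives termination directly for that fixed klt pair without any limiting procedure.
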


The upshot here is that, starting with a log canonical pair $(X,D)$ we can obtain a model which is dlt and $\Q$-factorial. \\

The statement that we will actually apply follows from Proposition 2.9 of \cite{px}. 

\begin{prop}{\cite[Proposition 2.9]{px}}\label{qfact} Let $f: (X,D) \to B$ be a stable family over a smooth variety $B$. Assume that the general fiber $(X_b,D_b)$ has log canonical singularities and that the variation of the family is maximal. Then for each $0 < \epsilon \ll 1$ there exists a pair $(Z, \Delta_{\epsilon})$, an effective divisor $\Delta$ on $Z$, and a morphism $p: Z \to X$ such that:

\begin{itemize}
\item[(a)] $K_Z + \Delta = p^*(K_X + D)$,
\item[(b)] $(Z, \Delta_{\epsilon})$ is klt
\item[(c)] $g: (Z, \Delta_{\epsilon}) \to B$ is a stable family
\item[(d)] The variation of $g$ is maximal
\item[(e)] $\Delta - \Delta_{\epsilon}$ is an effective divisor such that $\textrm{Supp}(\epsilon \Delta) \subset \textrm{Ex}(p) \cap \textrm{Supp}( p_*^{-1} \Delta)$
\end{itemize}
\end{prop}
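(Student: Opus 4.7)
The plan is to apply Hacon's existence theorem for $\Q$-factorial dlt modifications (Theorem \ref{dlt}) to produce a crepant model, lower the coefficients of the boundary to enter the klt regime, and then run a relative minimal model program to obtain a stable family over $B$.

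First, since $(X,D)$ is a stable family, $K_X + D$ is $\Q$-Cartier, so Theorem \ref{dlt} applies and produces a birational morphism $p_0 \colon (Z_0, \Delta_0) \to (X,D)$ from a $\Q$-factorial dlt pair satisfying $K_{Z_0} + \Delta_0 = p_0^*(K_X + D)$. I would then decompose $\Delta_0 = \Delta_0^{<1} + \Delta_0^{=1}$ according to whether coefficients are strictly less than or equal to one, and set $\Delta_{0, \epsilon} := \Delta_0^{<1} + (1 - \epsilon)\Delta_0^{=1}$ for each sufficiently small rational $\epsilon > 0$. All coefficients are now strictly less than one, which combined with the dlt hypothesis yields a klt pair $(Z_0, \Delta_{0, \epsilon})$.

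Next I would run the relative minimal model program for $K_{Z_0} + \Delta_{0, \epsilon}$ over $B$, ending in the relative log canonical model $p \colon (Z, \Delta_\epsilon) \to X$. Klt singularities are preserved along the MMP, so $(Z, \Delta_\epsilon)$ remains klt, giving (b); letting $\Delta$ denote the pushforward of $\Delta_0$, crepancy gives $K_Z + \Delta = p^*(K_X + D)$, which is (a). Since the generic fiber is stable and we have passed to the relative log canonical model, each fiber of $g \colon (Z, \Delta_\epsilon) \to B$ is a stable klt pair; Koll\'ar's condition then propagates via standard moduli-theoretic machinery for stable pairs, yielding (c). For (d), the generic fibers of $f$ and $g$ are related by the fixed crepant modification followed by the $\epsilon$-perturbation, so the moduli map of $g$ factors the canonical morphism associated to $f$ up to a generically finite correspondence; since the latter is generically finite by maximal variation, so is the former. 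Condition (e) is immediate from the construction: $\Delta - \Delta_\epsilon$ is supported on components of $\Delta_0^{=1}$ pushed forward, which are precisely the $p$-exceptional divisors with log discrepancy zero together with strict transforms of components of $D$ occurring in $p_*^{-1}\Delta$.

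The main obstacle I foresee is verifying Koll\'ar's condition after the MMP: the dlt modification and the subsequent MMP steps are a priori purely birational, and one needs to argue that the output $g$ is not only fiberwise stable with klt fibers but also flat and compatible with arbitrary base change. This requires invoking the relative moduli-theoretic apparatus for stable pairs, and in particular controlling how the non-$\Q$-Cartier nature of the original $D$ interacts with the perturbation on exceptional divisors; once this bookkeeping is carried out, (d) and (e) follow formally from the birational construction.
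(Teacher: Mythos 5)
Your route---Hacon's $\Q$-factorial dlt modification, lowering the coefficient-one part of the boundary by $\epsilon$ to reach the klt regime, then passing to a relative log canonical model---is exactly the paper's sketch, which itself defers the substantive verification of (c) (flatness, fiberwise stability, and Koll\'ar's condition) to \cite{px}, so your flagged ``main obstacle'' is precisely the part neither you nor the paper proves in detail. One correction: the relative log canonical model of $(Z_0,\Delta_{0,\epsilon})$ must be taken over $X$ (as in the paper's sketch and in the proof of Proposition \ref{lcbig}), not over $B$ as you write; an MMP run over $B$ need not produce any morphism $p\colon Z \to X$, which is indispensable for (a) and (e), whereas over $X$ the crepant identity (a) descends because $K_{Z_0}+\Delta_0$ is the pullback of the $\Q$-Cartier divisor $K_X+D$ and is therefore numerically trivial over $X$.
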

\begin{proof}[Sketch of proof]

The rough idea is to take a $\Q$-factorial dlt modification of $X$, and then shrink the resulting divisor so that the new pair $(\widetilde{Z}, \widetilde{\Delta}_{\epsilon})$ is klt. Finally, taking the relative log canonical model of $(\widetilde{Z}, \widetilde{\Delta}_{\epsilon}) \to X$ yields a stable family with klt general fiber and maximal variation. \end{proof}

Using the above discussion, we are now in position to prove the main statement of this section, Proposition \ref{lcbig}, whose proof is inspired by Proposition 2.15 of \cite{px}.

\begin{proof}[Proof of Proposition \ref{lcbig}] 
We begin with a stable family with maximal variation $f: (X,D) \to B$ such that the generic fiber is log canonical. The goal is to show that $\omega_{X^n_B}(D_n)$ is big for $n$ sufficiently large. 

First take $\widetilde{p}: \widetilde{Z} \to X$ to be a $\Q$-factorial dlt modification of $X$, and let $\widetilde{\Delta}$ be a divisor on $\widetilde{Z}$ such that $\widetilde{p}^*(K_X + D) = K_{\widetilde{Z}} + \widetilde{\Delta}$. Since $\widetilde{Z}$ is $\Q$-factorial by construction, we can lower the coefficients of the divisor $\widetilde{\Delta}$ by $0< \epsilon \ll 1$, a rational number, to obtain a klt pair $(\widetilde{Z}, \widetilde{\Delta}_{\epsilon})$. 

Define $p: Z \to X$ to be the relative log canonical model of $(\widetilde{Z}, \widetilde{\Delta}_{\epsilon}) \to X$. Denoting by $q: \widetilde{Z} \dashrightarrow Z$ the induced morphism, we define $\Delta$ to be the pushforward $\Delta = q_* (\widetilde{\Delta})$. 
By Proposition \ref{qfact}, the new family $g: (Z, \Delta_{\epsilon}) \to B$ is a stable family with maximal variation such that the generic fiber is klt. Thus, by Proposition \ref{kltbig}, for $n$ large enough, $\omega_{Z^n_B}(\Delta_{\epsilon,n})$ is big. 

Furthermore, since $(Z, \Delta_{\epsilon}) \to X$ is the relative log canonical model of $(\widetilde{Z}, \widetilde{\Delta}_{\epsilon}) \to X$, pluri-log canonical forms on $\widetilde{Z}$ are the pull back of pluri-log canonical forms on $Z$.  From this we conclude that $\omega_{\widetilde{Z}^n_B}(\widetilde{\Delta}_{\epsilon,n})$ is also big. Now since $\widetilde{Z}$ is $\Q$-factorial, we know that $\epsilon \Delta$ is a $\Q$-Cartier divisor. This property allows us to enlarge the coefficients of $\widetilde{\Delta}$. Recall that $\epsilon\Delta$ is effective by Proposition \ref{qfact} (e), and thus $\omega_{\widetilde{Z}^n_B}(\widetilde{\Delta}_n)$ is big as well.

Since $\widetilde{p}: \widetilde{Z} \to X$ is a birational morphism and $\widetilde{p}^*(K_X + D) = K_{\widetilde{Z}} + \widetilde{\Delta}$, pulling back pluri-log canonical forms through $\widetilde{p}$ preserves the number of sections. Thus, we finally conclude that $\omega_{X^n_B}(D_n)$ is big. \end{proof}

Finally, we prove the following theorem for pairs openly of log general type:

\begin{theorem}\label{openbig} Let $f: (X,D) \to B$ be a stable family with integral, openly canonical, and log canonical general fiber over a smooth projective variety $B$. Suppose that the variation of the family $f$ is maximal. Then $\omega_{X_{ss}^n}(D^{ss}_n)$ is big, where $(X_{ss}^n, D^{ss}_n)$ denotes the $n$th fibered power of the weak semistable model of the pair $(X,D)$. \end{theorem}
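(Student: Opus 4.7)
The plan is to transfer bigness from the original fibered power $X^n_B$—where it is established by Proposition \ref{lcbig}—to the weak semistable model, using the openly canonical hypothesis to ensure that pluri-log-canonical sections pull back along the birational modification.

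First, I would apply Proposition \ref{lcbig} to the stable family $f: (X,D) \to B$ to conclude that $\omega_{X^n_B}(D_n)$ is big for $n$ sufficiently large. Next, by weak semistable reduction (Abramovich--Karu), after a base change $\widetilde{B} \to B$ there is a birational modification $\pi: X_{ss} \to X_{\widetilde{B}}$ producing a weak semistable family $(X_{ss}, D^{ss}) \to \widetilde{B}$ with $D^{ss}$ the reduced boundary; taking $n$-fold fibered powers over $\widetilde{B}$ extends $\pi$ to a birational morphism $\pi_n: X_{ss}^n \to X^n_{\widetilde{B}}$. Since Proposition \ref{lcbig} is stable under base change, $\omega_{X^n_{\widetilde{B}}}(D_n)$ remains big.

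The heart of the argument is to produce, for each $m$ such that the reflexive powers below are well-defined, an injection
\[
\pi_n^*\!\left(\omega_{X^n_{\widetilde{B}}}(D_n)^{[m]}\right) \hookrightarrow \omega_{X_{ss}^n}(D^{ss}_n)^{[m]}.
\]
The openly canonical hypothesis guarantees that $X_b \setminus D_b$ has canonical singularities, and (combined with the fibered-product analysis of Section 4) this extends to the open complement of $D_n$ in $X^n_{\widetilde{B}}$; consequently all discrepancies of $\pi_n$ over this open locus are non-positive, so pluri-canonical forms pull back without acquiring poles. Over the boundary, the preimage of $D_n$ is set-theoretically contained in $D^{ss}_n$, and since the coefficients of $D^{ss}_n$ are all equal to $1 \geq d_i$, pluri-log-canonical forms pull back to pluri-log-canonical forms. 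Pulling back global sections then yields $h^0(\omega_{X_{ss}^n}(D^{ss}_n)^{[m]}) \geq h^0(\omega_{X^n_{\widetilde{B}}}(D_n)^{[m]})$, and since the right side grows of maximal order in $m$ by Proposition \ref{lcbig}, so does the left side, proving bigness.

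The main obstacle is verifying the discrepancy claim: the openly canonical hypothesis is phrased on the general fiber, but the injection above requires non-positive discrepancies for $\pi_n$ on the entire open complement of the boundary in the total space of the fibered power. This should be handled by combining inversion of adjunction for the stable family $f$—which lifts canonical singularities from the general fiber to a neighborhood of it—with the observation that the extra divisors introduced by weak semistable reduction sit over $D$ or over the non-smooth locus of $f$, so on the open complement only the openly canonical condition is needed. A secondary subtlety is tracking compatibility of the construction across the base change $\widetilde{B}\to B$ and verifying that Proposition \ref{lcbig} applies to the base-changed family, which should follow from the fact that maximal variation and stability are preserved by generically finite base change.
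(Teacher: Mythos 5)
Your overall strategy (pull pluri-log-canonical sections back to the semistable model) points in the same direction as the paper, but the step you call the heart of the argument --- the injection $\pi_n^*\bigl(\omega_{X^n_{\widetilde B}}(D_n)^{[m]}\bigr) \hookrightarrow \omega_{X_{ss}^n}(D^{ss}_n)^{[m]}$ --- has a genuine gap, and it is precisely the point where the paper's proof does something different. Writing $K_{X_{ss}^n} + D^{ss}_n = \pi_n^*(K_{X^n_{\widetilde B}}+D_n) + \sum_E c_E E$, the injection requires $c_E \geq 0$ for every $\pi_n$-exceptional divisor $E$. But the divisors introduced by weak semistable reduction live over the discriminant locus $\Delta \subset B_1$, i.e.\ over the \emph{degenerate} fibers, not only over $D$ or near the general fiber. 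Over that locus the total space pair is only log canonical (inversion of adjunction, Proposition \ref{slcbase}, gives slc/lc of the total space --- it does not give canonical singularities near special fibers), so exceptional divisors with log discrepancy zero are allowed; such an $E$ need not be contained in $D^{ss}_n$, and for it $c_E<0$, so pulled-back sections acquire poles along $E$. The openly canonical hypothesis is a condition on the general fiber only, and your proposed fix ("a neighborhood of the general fiber") misses exactly the fibers over $\Delta$ where these exceptional divisors sit. So the claimed injection is unjustified, and false in general.

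The paper circumvents this by never pulling back the untwisted sheaf: since $\phi^{*}(\omega_{g}(\widetilde{D})) = \omega_{\ps}(D^{ss}+E) \subset \omega_{\ps}(D^{ss}+\ps^{*}\Delta)$, it first subtracts a small multiple of the (pullback of the) fixed discriminant divisor $\Delta$; bigness survives this twist, and after passing to $n$-th fibered powers the twists sum to $-\Delta^{ss}_n$ and exactly cancel the poles along the exceptional locus, so the pulled-back sections land in $\omega_{X_{ss}^n}(D^{ss}_n)$. Your argument needs this (or an equivalent) discriminant twist; it cannot be replaced by the openly canonical hypothesis, whose role in the paper is different --- it guarantees that sections of $\omega_{X_{ss}}(D^{ss})$ give regular logarithmic pluricanonical sections on any desingularization, which is what upgrades bigness on the semistable model to "openly of log general type" in Theorem \ref{openthm}, not that the discrepancies of the semistable modification are non-positive. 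The peripheral points in your sketch (applying Proposition \ref{lcbig} after the base change, preservation of maximal variation) are fine.
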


\begin{proof}
Consider the following diagram:
\begin{center}

\begin{tikzpicture}
\matrix(m)[matrix of math nodes,
row sep=2.6em, column sep=2.8em,
text height=1.5ex, text depth=0.25ex]
{(X_{ss}, D^{ss}) & (\widetilde{X}, \widetilde{D}) = X \times_{B_1} B & (X,D)\\
\Delta \subset B_1 & B_1 & B\\};
\path[->,font=\scriptsize,>=angle 90]
(m-1-1) edge node[auto] {$\phi$} (m-1-2)
        edge node[auto] {$\pi_{ss}$} (m-2-1)
(m-1-2) edge node[auto] {$\sigma$} (m-1-3)
	   edge node[auto] {$g$} (m-2-2)
(m-1-3) edge node[auto] {$f$} (m-2-3)
(m-2-1) edge node[auto] {} (m-2-2)
(m-2-2) edge node[auto] {} (m-2-3);
\end{tikzpicture}

\end{center}where $(X_{ss}, D^{ss}) \to B_1$ denotes the weak semistable model (see Definition 0.4 of \cite{karu}) of the family $(X,D) \to B$, and $\Delta \subset B_1$ denotes the discriminant divisor over which the exceptional lies. Such a model exists by \cite{karu}. Since taking the weak semistable model gives a pair which is at worst openly canonical and log canonical, we are not required to take a resolution of singularities. This is because, by definition of both openly canonical and log canonical singularities, sections of $\omega_{X_{ss}}(D^{ss})$ give regular sections of logarithmic pluricanonical sheaves of any desingularization.

More precisely, we have: $$\phi^{*} (\omega_{g}(\widetilde{D})) = \omega_{\ps}(D^{ss} + E) \subset \omega_{\ps}(D^{ss}+ \ps^{*}(\Delta)).$$

Let $\ps^*\Delta = \Delta^{ss}$.  Then since $\omega_{g}(\widetilde{D})$ is big by Theorem \ref{kp}, so is $\omega_{g}(\widetilde{D} - \frac{1}{n}\Delta^{ss})$. Taking fibered powers as in Proposition 2.2 shows that $\omega_{\widetilde{X}^n_{B_1}}(\widetilde{D}_n(-\Delta^{ss}_n))$ is also big. Moreover, $$\phi_n^*(\omega_{\widetilde{X}^n_{B_1}}(\widetilde{D}_n(-\Delta^{ss}_n))) \subset \omega_{X_{ss}^n}(D^{ss}_{n})(\Delta^{ss}_n - \Delta^{ss}_n) = \omega_{X_{ss}^n}(D^{ss}_{n})$$ is big. 
\end{proof}

The definition of openly of log general type then implies that we have actually shown the following:

\begin{theorem}\label{openthm} Let $f: (X,D) \to B$ be a stable family with integral, openly canonical, and log canonical general fiber over a smooth projective variety $B$. Suppose that the variation of the family $f$ is maximal. Then there exists an integer $n >0$ such that $(X^n_B, D_n)$ is openly of log general type. \end{theorem}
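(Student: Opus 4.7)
The plan is to obtain Theorem \ref{openthm} as an essentially formal consequence of Theorem \ref{openbig} by translating through Definition \ref{def2}. By Theorem \ref{openbig} there is some $n > 0$ with $\omega_{X_{ss}^n}(D^{ss}_n)$ big, where $(X_{ss}^n, D^{ss}_n)$ denotes the $n$th fibered power of the weak semistable model of $(X,D) \to B$. To verify the openly-of-log-general-type condition I would first supply the data required by Definition \ref{def2}: choose a log resolution $\mu \colon Y \to X_{ss}^n$ with $Y$ smooth projective and $E \subset Y$ a simple normal crossings divisor containing both the exceptional locus of $\mu$ and the strict transform of $D^{ss}_n$, so that $\widetilde{X} := Y \setminus E$ is a desingularization of $X_{ss}^n \setminus D^{ss}_n$ sitting inside $Y$ with snc boundary. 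Since the weak semistable model is a birational modification and the two open strata $X_{ss}^n \setminus D^{ss}_n$ and $X^n_B \setminus D_n$ are birationally equivalent, the birational invariance noted just after Definition \ref{def2} lets me use $(Y,E)$ as a witness for $X^n_B \setminus D_n$ as well.

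The core of the argument is transporting bigness of $\omega_{X_{ss}^n}(D^{ss}_n)$ to bigness of $\omega_Y(E)$. Since the general fiber is both openly canonical and log canonical, and since these properties descend to fibered powers (using Proposition \ref{sing} for log canonicity and an analogous product argument for the openly canonical condition on the complement), the pair $(X_{ss}^n, D^{ss}_n)$ has log canonical boundary and canonical complement. As the author already notes inside the proof of Theorem \ref{openbig}, this is precisely the regime in which global sections of $\omega_{X_{ss}^n}(D^{ss}_n)^{\otimes m}$ correspond to global sections of $\omega_Y(E)^{\otimes m}$: canonical singularities on the complement ensure that pluricanonical forms extend regularly along $\mu$, while log canonicity controls sections along $D^{ss}_n$. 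Consequently bigness transports from $(X_{ss}^n, D^{ss}_n)$ to $(Y,E)$, and Definition \ref{def2} delivers the conclusion that $(X^n_B, D_n)$ is openly of log general type.

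The only delicate step is this section-level correspondence, and it rests squarely on the openly canonical hypothesis: without it the exceptional divisor of $\mu$ could absorb positivity, and the comparison between logarithmic pluricanonical sections on $X_{ss}^n$ and on $Y$ would cease to be an equality. Everything else in the argument is formal bookkeeping: producing the log smooth compactification $(Y,E)$, invoking birational invariance of Definition \ref{def2} to pass between $X_{ss}^n \setminus D^{ss}_n$ and $X^n_B \setminus D_n$, and reading off bigness from Theorem \ref{openbig}.
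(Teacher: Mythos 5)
Most of your argument reproduces the paper's own (very terse) deduction: Theorem \ref{openbig} supplies bigness of $\omega_{X_{ss}^n}(D^{ss}_n)$, and the openly canonical plus log canonical singularities of the semistable fibered power are exactly what allow sections of $\omega_{X_{ss}^n}(D^{ss}_n)^{\otimes m}$ to give regular sections of $\omega_Y(E)^{\otimes m}$ on a log resolution $(Y,E)$, which is the witness Definition \ref{def2} asks for. Up to that point you are following the paper's route.

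The genuine gap is the step where you return from $X_{ss}^n$ to $X^n_B$. You assert that the weak semistable model is a birational modification, so that $X_{ss}^n \setminus D^{ss}_n$ and $X^n_B \setminus D_n$ are birationally equivalent and the birational invariance of Definition \ref{def2} applies. This is not true in general: weak semistable reduction in the sense of \cite{karu} requires an \emph{alteration} of the base, i.e.\ a projective, generically finite surjection $B_1 \to B$ whose degree may be greater than one, and $X_{ss}$ is a modification of the main component of $X \times_B B_1$, not of $X$ itself. Hence $X_{ss}^n \to X^n_B$ is generically finite of degree $\deg(B_1/B)$, not birational, and being openly of log general type does not descend along generically finite dominant maps (a genus two curve maps two-to-one onto an elliptic curve), so birational invariance cannot be invoked at this point. (A smaller inaccuracy: Proposition \ref{sing} concerns the fibered powers of the original stable family over $B$; for $(X_{ss}^n, D^{ss}_n)$ the paper instead appeals to weak semistability giving at worst openly canonical and log canonical singularities.) This descent to the original family is precisely where something beyond Definition \ref{def2} is required; the paper's own one-line deduction is silent on it but never rests on the false birationality claim, and in its applications the passage back is made via invariant sections and group quotients (Proposition \ref{group}, Theorem \ref{thm38}, using Lemma 4 of \cite{dan2}) or via the morphism to the tautological family, not via birational invariance. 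As written, your final sentence does not follow from what precedes it; either the conclusion must be stated for the model over $B_1$, or you must supply an argument descending sections along the generically finite cover.
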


\section{Singularities} \label{3}
The purpose of this section is to prove that, assuming we begin with a pair $(X,D)$ with log canonical singularities, then fibered powers $(X^n_B, D_n)$ also have log canonical singularities for all $n > 0$. As the following example shows, it \emph{is} necessary to restrict the singularities, as there exist varieties $Y$ such that $\omega_Y$ is a big, but $Y$ is \emph{not} of general type! 

\begin{example} Let $Y$ be the projective cone over a quintic plane curve $C$. Then $\omega_Y$ is big (even ample), but $Y$ is birational to $\PP^1 \times C$, which has Kodaira dimension $\kappa(\PP^1 \times C) = -\infty$. So although $\omega_Y$ is big, $Y$ is \emph{not} openly of log general type. \end{example}

The following proposition is a version of log inversion of adjunction:

\begin{prop}[Lemma 2.12 \cite{zsolt}]\label{slcbase} The total space of an slc family over an slc base has slc singularities. \end{prop}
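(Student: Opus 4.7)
The plan is to prove this by reducing from the semi-log canonical case to the normal log canonical case, and then applying inversion of adjunction fiberwise.

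First I would pass to normalizations $\nu_X : \Xv \to X$ and $\nu_B : B^{\nu} \to B$. Flatness of $f$ together with Koll\'ar's condition ensure that normalization is compatible with the family structure: the induced morphism $f^{\nu} : \Xv \to B^{\nu}$ is flat, and the conductor divisor $D_X^{\vee}$ on $\Xv$ restricts on each fiber to the conductor of $X_b^{\nu} \to X_b$. Since each fiber $(X_b, D_b)$ is slc, the normalized fibers $(X_b^{\nu}, \nu_X^{-1}(D)|_{X_b^{\nu}} + D_X^{\vee}|_{X_b^{\nu}})$ are lc; similarly, slc-ness of the base gives that $(B^{\nu}, D_B^{\vee})$ is lc. This reduces the problem to the following assertion in the normal case: if $f^{\nu} : (\Xv, \nu_X^{-1}(D) + D_X^{\vee}) \to (B^{\nu}, D_B^{\vee})$ is a flat family of lc pairs with normal total space and lc base, then the pair $(\Xv, \nu_X^{-1}(D) + D_X^{\vee} + (f^{\nu})^{*} D_B^{\vee})$ is lc on the total space.

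Next, in the normal lc setting, I would apply inversion of adjunction. By relative adjunction, the restriction of the log canonical class $K_{\Xv} + \nu_X^{-1}(D) + D_X^{\vee} + (f^{\nu})^{*} D_B^{\vee}$ to a general fiber agrees with $K_{X_b^{\nu}} + \nu_X^{-1}(D)|_{X_b^{\nu}} + D_X^{\vee}|_{X_b^{\nu}}$, which is lc by the fiberwise hypothesis. Inversion of adjunction then yields log canonicity of the total space pair in a neighborhood of any such fiber. Using flatness of $\omega_f(D)^{[m]}$ (and Koll\'ar's condition to ensure that the identity persists on non-general fibers), the lc property propagates across all fibers, giving the statement globally on $\Xv$. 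Finally, gluing along the conductor descends the lc pair on $\Xv$ back to an slc pair on $X$.

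The main obstacle will be verifying that normalization interacts correctly with the family structure: namely, that the conductor of $\nu_X$ restricts fiberwise to the conductor of $X_b^{\nu} \to X_b$, and that the boundary $D$ lifts compatibly through $\nu_X$ and restricts to the boundary of each slc fiber. This compatibility is precisely what Koll\'ar's condition in the definition of an slc family was designed to guarantee, so the reduction step should be essentially formal; the real content of the argument lies in the inversion of adjunction step, where tools from the MMP (existence of relative lc models, behavior of discrepancies under pullback) carry the weight.
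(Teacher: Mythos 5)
First, note that the paper does not prove this proposition at all: it is quoted as Lemma 2.12 of \cite{zsolt} and used as a black box (the surrounding text only remarks that it is ``a version of log inversion of adjunction''). So your sketch is not competing with an in-paper argument; it has to stand on its own, and as written it has genuine gaps precisely at the points you dismiss as formal. The most serious one is the inversion-of-adjunction step: inversion of adjunction compares $(X, S+\Delta)$ near a \emph{reduced divisor} $S$ with $(S^{\nu}, \mathrm{Diff}_{S^{\nu}}\Delta)$. A fiber of $f$ over a point of a base of dimension $\geq 2$ is not a divisor in $X$, so there is no single ``fiberwise'' application that yields lc-ness of the total space in a neighborhood of that fiber. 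The actual argument requires an induction on $\dim B$: locally on the base one cuts by a Cartier divisor (a general hypersurface or local parameter through the point), pulls it back to $X$, applies adjunction/inversion of adjunction to that divisor, and uses the inductive hypothesis to know that this divisor is again the total space of an slc family over a lower-dimensional slc base. That inductive structure is the real content of the lemma and is entirely absent from your sketch. Relatedly, the remark that lc-ness ``propagates across all fibers'' by flatness of $\omega_f(D)^{[m]}$ is not an argument: log canonicity is not transported by flatness of a divisorial sheaf, and in any case every fiber is slc by hypothesis, so nothing needs to propagate --- the issue is running the adjunction argument at every point of the base, not only at general ones.

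The reduction step also assumes more than it proves. That the normalization is compatible with the family --- flatness of $\Xv \to B^{\nu}$ and the claim that the conductor of $\nu_X$ restricts on each fiber to the conductor of $X_b^{\nu} \to X_b$ --- is a nontrivial statement (normalization does not commute with passing to fibers in general), and Koll\'ar's condition, which governs base-change behavior of $\omega_f(D)^{[m]}$, does not formally deliver it. Your boundary bookkeeping is also off: since the total space is non-normal both along the fiberwise double loci and over the non-normal locus of $B$, the conductor $D_X^{\vee}$ already contains the preimage of the double locus of the base, so adding $(f^{\nu})^{*}D_B^{\vee}$ on top of it double counts the vertical part. Finally, slc-ness of $(X,D)$ requires in addition that $X$ be $S_2$ and Gorenstein (nodal) in codimension one and that $K_X+D$ be $\Q$-Cartier; none of these is verified, and the $\Q$-Cartier condition in particular does not follow formally from the fiberwise hypothesis --- it is needed even to state lc-ness of the normalized pair, so it cannot be deferred to a concluding ``gluing'' sentence. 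The overall shape of your plan (normalize, reduce to the lc case, use adjunction) is indeed the spirit of the cited proof, but the steps you label as essentially formal are exactly where the work lies.
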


This immediately implies: 

\begin{cor}\label{fam} The total space of the product of slc families over an slc base also has slc singularities. \end{cor}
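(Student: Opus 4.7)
The plan is to proceed by induction on the number of factors $n$ in the fiber product, bootstrapping Proposition \ref{slcbase} at each step. The base case $n = 1$ is exactly Proposition \ref{slcbase}.

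For the inductive step, let $f_i \colon (X_i, D_i) \to B$ for $i = 1, \dots, n$ be slc families over the slc base $B$. By the induction hypothesis, the pair
\[ (Y, E) := \bigl( X_1 \times_B \cdots \times_B X_{n-1}, \ \textstyle\sum_{i<n} \pi_i^* D_i \bigr) \]
has slc singularities. Base-change the remaining family $f_n \colon (X_n, D_n) \to B$ along the projection $p \colon Y \to B$ to obtain
\[ g \colon \bigl(X_n \times_B Y, \ \pi_{X_n}^* D_n\bigr) \longrightarrow Y. \]
I claim $g$ is again an slc family: every fiber of $g$ is canonically identified with a fiber of $f_n$, hence slc; and by Koll\'ar's condition the reflexive powers $\omega_g(\pi_{X_n}^* D_n)^{[m]}$ coincide with the pullback of $\omega_{f_n}(D_n)^{[m]}$, so they remain flat and stable under further base change.

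Now apply Proposition \ref{slcbase} to the slc family $g$ over the slc base $(Y, E)$. The conclusion is that the total space pair $(X_n \times_B Y, \ \pi_{X_n}^* D_n + \pi_Y^* E)$ has slc singularities, where $\pi_Y$ denotes the projection onto $Y$. Under the canonical isomorphism $X_n \times_B Y \cong X_1 \times_B \cdots \times_B X_n$ and the identity $\pi_Y^* E = \sum_{i<n} \pi_i^* D_i$ on the product, this is precisely $\bigl(X^n_B, \ \sum_{i=1}^n \pi_i^* D_i\bigr)$, completing the induction.

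The only subtle point is making sure Proposition \ref{slcbase}, cited from \cite{zsolt}, genuinely applies when the base is itself a pair $(Y,E)$ rather than a bare variety, producing slc singularities on the total space with the boundary divisor $D + f^*E$. This is the standard form in which log inversion of adjunction appears in the slc literature, and it is precisely what is needed to run the inductive step; the verification that base change preserves the slc family structure is immediate from Koll\'ar's condition as set up in Section \ref{1}. No additional hypotheses on the base $B$ beyond what is assumed are needed, since $(Y,E)$ inherits slc singularities from the preceding inductive step.
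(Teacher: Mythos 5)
Your proof is correct and follows essentially the same route as the paper: view the fiber product as the total space of an slc family (obtained by base change, using Koll\'ar's condition) over the other factor, which is itself slc by Proposition \ref{slcbase}, and then apply Proposition \ref{slcbase} again. The paper states this for two factors and notes the inductive extension to fibered powers afterward, while you package the whole thing as a single induction; the subtlety you flag about the base being a pair $(Y,E)$ is likewise implicit in the paper's own argument.
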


\begin{proof} Let $f: (X_1,D_1) \to B$ and $g: (X_2, D_2) \to B$ be two slc families over an slc base $B$. Then the product family $g: (X, \Delta) \to B$ is the total space of an slc family over either of the factors. Therefore both the product family as well as its total space have slc singularities  by Proposition \ref{slcbase}. \end{proof}

Inductively, this shows that the fibered powers $(X_B^n, D_n)$ have semi-log canonical singularities. The statement that we will actually use to prove our result is the following:

\begin{prop}\label{sing} Let $f:(X,D) \to B$ be a stable family with integral and log canonical general fiber over a smooth projective variety $B$. Then for all $n > 0$ the fibered powers $(X^n_B, D_n)$ have log canonical singularities. \end{prop}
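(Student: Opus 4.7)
The plan is to upgrade the semi-log canonical conclusion of Corollary \ref{fam} to log canonical by establishing normality. This rests on the elementary observation that a normal slc pair is automatically log canonical: when $X$ is normal, the normalization in the definition of slc becomes the identity, the double locus vanishes, and the remaining condition collapses to the log canonical condition on $(X,D)$. I would proceed by induction on $n$, and the base case $n = 1$ already contains all the essential ideas.

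For $n = 1$, Proposition \ref{slcbase} applied with the smooth (hence slc) base $B$ shows that $(X, D)$ is slc. To upgrade slc to lc, I would verify Serre's criterion $R_1 + S_2$ on $X$. The $S_2$ property is built into the slc condition. For $R_1$, I would examine codimension-one points $x \in X$ via the flat morphism $f$: if $f(x)$ is the generic point of $B$, then $x$ is a codimension-one point of the generic fiber $X_\eta$, which is normal since it is log canonical, giving regularity at $x$; if $f(x)$ is a codimension-one point $b \in B$, then $x$ is a generic point of an irreducible component of $X_b$, and since $X_b$ is reduced as an slc pair (and in characteristic zero reduced implies geometrically reduced), the local ring $\calO_{X_b, x}$ is a field, so flatness together with the regularity of the DVR $\calO_{B, b}$ forces $\calO_{X, x}$ to itself be a DVR. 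The $\Q$-Cartier condition on $K_X + D$ follows from Lemma \ref{gorenstein} applied with $n = 1$, using Koll\'ar's condition and the smoothness of $B$.

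For the inductive step, I would view the projection $X^n_B \to X^{n-1}_B$ as the base change of the slc family $X \to B$ along $X^{n-1}_B \to B$. Combining this with the inductive hypothesis that $X^{n-1}_B$ is log canonical (hence slc), Proposition \ref{slcbase} yields that $(X^n_B, D_n)$ is slc. Normality of $X^n_B$ is then established by the same $R_1 + S_2$ argument applied to the flat morphism $X^n_B \to B$, whose general fiber is the $n$-fold product $X_\eta^n$, normal since products of normal varieties in characteristic zero are normal, and whose fibers over codimension-one points of $B$ are reduced. The $\Q$-Cartier condition for $K_{X^n_B} + D_n$ again comes directly from Lemma \ref{gorenstein}.

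The main obstacle I anticipate is handling $X^n_B$ carefully as an irreducible scheme: by the notational convention of Section \ref{1}, it is defined as the main component of the $n$-fold fiber product, which a priori may carry additional vertical components whose interaction with the main component could obstruct the pointwise $R_1$ analysis. In the setting at hand, however, with integral general fiber in characteristic zero, the generic fiber of the fiber product is $X_\eta^n$ and is irreducible, so the fiber product is itself irreducible and coincides with $X^n_B$; making this identification rigorous, and ensuring that the slc conclusion of Corollary \ref{fam} passes cleanly to the main component at every stage of the induction, is the most delicate part of the plan.
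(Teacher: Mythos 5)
Your proposal is correct and follows essentially the same route as the paper: obtain semi-log canonicity of the (fibered powers of the) total space from Proposition \ref{slcbase} and Corollary \ref{fam}, then upgrade slc to log canonical by proving normality via Serre's criterion $R_1+S_2$, with $R_1$ coming from normality of the general fiber and generic reducedness/regularity of the special fibers. Your version merely makes explicit the induction on $n$ and the flatness/DVR argument at codimension-one points over codimension-one points of $B$, which the paper treats more briefly.
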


\begin{proof} By Proposition \ref{slcbase}, the total space of the family $(X,D)$ is slc. In fact, we will show that it is actually log canonical, which is equivalent to showing that $(X,D)$ is normal. Recall to show that the pair $(X,D)$ is normal, it suffices to show that it is regular in codimension one (R1) and satisfies Serre's condition S2.  Since the general fiber has log canonical singularities, the fibers $(X_b,D_b)$ are R1 over the general point of the base $B$. Over the special fibers, the singularities are of at least codimension one in the fiber, and are thus at least codimension two in the total space. Therefore, it follows that the total space $(X,D)$ is R1. Finally, the pair $(X,D)$ is S2 by definition, since it has semi-log canonical singularities.

Therefore, by Corollary \ref{fam}, for all $n >0$ the fibered powers $(X^n_B, D_n)$ also have log canonical singularities.  

\end{proof}

In fact, the following stronger statements are also true. Although we do not use them in this paper, we hope that they may be of interest to readers:

\begin{prop} The fiber product of two stable families is a stable family. \end{prop}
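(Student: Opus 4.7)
The plan is to verify each of the defining conditions of a stable family for the fibered product. Let $f_i : (X_i, D_i) \to B$ ($i=1,2$) be the two stable families, set $Y := X_1 \times_B X_2$ with projections $\pi_i : Y \to X_i$ and structure morphism $h := f_1 \circ \pi_1 = f_2 \circ \pi_2$, and put $\Delta := \pi_1^* D_1 + \pi_2^* D_2$. Flatness of $h$ is immediate: $\pi_1$ is the base change of $f_2$ along $f_1$, hence flat, so $h$ is a composition of flat morphisms.

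Next I would verify fiberwise stability. For $b \in B$, the fiber is $(Y_b, \Delta_b) = (X_{1,b} \times X_{2,b}, \, p_1^* D_{1,b} + p_2^* D_{2,b})$. The slc property follows from Corollary \ref{fam} applied with the one-point base $\mathrm{Spec}\, k(b)$, which is trivially slc. For the ampleness requirement, after taking a sufficiently divisible $m$ one has
$$\omega_{Y_b}(\Delta_b)^{[m]} = p_1^* \omega_{X_{1,b}}(D_{1,b})^{[m]} \otimes p_2^* \omega_{X_{2,b}}(D_{2,b})^{[m]},$$
and since each factor is an ample line bundle on the corresponding $X_{i,b}$, the tensor product of the pullbacks is ample on the product.

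The last and most delicate point is verifying that $\omega_h(\Delta)^{[m]}$ is flat over $B$ and that Koll\'ar's condition holds. For $m$ sufficiently divisible, each $\omega_{f_i}(D_i)^{[m]}$ is a line bundle in a neighborhood of every fiber, and mimicking the proof of Lemma \ref{gorenstein}, one has
$$\omega_h(\Delta)^{[m]} = \pi_1^* \omega_{f_1}(D_1)^{[m]} \otimes \pi_2^* \omega_{f_2}(D_2)^{[m]},$$
obtained by comparing reflexive sheaves on the open subset whose complement has codimension $\geq 2$ (namely, the union of general fibers with the nonsingular locus of each special fiber). Flatness then follows from flatness of each $\omega_{f_i}(D_i)^{[m]}$. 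For Koll\'ar's condition, for any $\tau: B' \to B$ the base change $Y \times_B B'$ equals $X_{1,B'} \times_{B'} X_{2,B'}$, the displayed formula is compatible with such base change, and Koll\'ar's condition on each factor transfers to the corresponding statement for $h$.

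The main obstacle is making the reflexive identity above rigorous. This requires checking that the non-$\Q$-Gorenstein locus on $Y$ is of codimension at least two (using the slc hypothesis on each factor, together with the observation that special-fiber singularities are codimension one in the fiber and thus codimension $\geq 2$ in the total space), and that the line-bundle identity valid on the $\Q$-Gorenstein locus extends uniquely to the reflexive hull on all of $Y$. This is the exact fiber-product analog of Lemma \ref{gorenstein} and should be handled by the same codimension-two comparison argument.
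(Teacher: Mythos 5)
Your proposal is correct and takes essentially the same route as the paper: its proof (reproducing Proposition 2.12 of \cite{products}) likewise rests on the identity $\omega^{[k]}_{Z/B}(F) = p^*_X \omega^{[k]}_{X/B}(D) \otimes p^*_Y \omega^{[k]}_{Y/B}(E)$, established by a reflexive sheaf/codimension-two comparison, and then deduces flatness, Koll\'ar's condition via base-change compatibility, and relative ampleness of the resulting $\Q$-line bundle. The only difference is packaging: where you redo the codimension-two comparison by hand (mimicking Lemma \ref{gorenstein}) and make the fiberwise slc check explicit via Corollary \ref{fam}, the paper outsources these technical points to Lemma 2.11 of \cite{products}, Lemma 2.6 of \cite{hk}, and Proposition 5.1.4 of \cite{ah}.
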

\begin{proof} This result essentially follows from Proposition 2.12 in \cite{products}. We reproduce the argument for the convenience of the reader.

Let $f: (X,D) \to B$ and $g : (Y, E) \to B$ be two stable families, and denote the fiber product family by $h: (Z, F) \to B$. Since both families $f$ and $g$ are flat of finite type with $S_2$ fibers by assumption, and since we are assuming Koll\'ar's condition, by Proposition 5.1.4 of \cite{ah}, we have that $\omega_{X/B}^{[k]}(D)$ is flat over $B$. Moreover, by Lemma 2.11 of \cite{products},  we have that $$p^*_X \omega^{[k]}_{X/B}(D) \otimes p^*_Y \omega^{[k]}_{Y/B}(E)$$ is a reflexive sheaf on the product. By Lemma 2.6 of \cite{hk}, the above sheaf is isomorphic to $\omega^{[k]}_{Z/B}(F)$ on an open subset whose complement has codimension at least two, and therefore we  conclude that $$\omega^{[k]}_{Z/B}(F) = p^*_X \omega^{[k]}_{X/B}(D) \otimes p^*_Y \omega^{[k]}_{Y/B}(E).$$ Moreover, Koll\'ar's condition holds, as by assumption both components of this fiber product commute with arbitrary base change. Choosing a sufficient index $k$, namely the least common multiple of the index of the factors, we see that $\omega_{Z/B}(F)$ is a relatively ample $\Q$-line bundle and thus we conclude that $h: (Z,F) \to B$ is also a stable family.
\end{proof}

\begin{prop} \label{totalspace} The total space of the fiber product of stable families over a stable base is stable. \end{prop}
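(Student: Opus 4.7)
The plan is to reduce the statement to combining the preceding proposition with the general principle that the total space of any single stable family over a stable base is itself a stable pair. First, I would apply the preceding proposition to assemble the given stable families over the stable base $(B, \Delta_B)$ into a single stable family $\pi: (Z, F) \to B$ whose fibers are the fiber products of the original fibers. The task then reduces to showing that the pair $(Z, F + \pi^*\Delta_B)$ is stable in the sense of Definition \ref{stable}.

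For the semi-log canonical condition on $(Z, F + \pi^*\Delta_B)$, I would invoke Proposition \ref{slcbase}: since $\pi: (Z, F) \to B$ is an slc family and $(B, \Delta_B)$ is an slc pair, the total space inherits slc singularities. This is exactly the log inversion of adjunction statement we already used in Corollary \ref{fam}, only now applied with nontrivial boundary on the base.

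For ampleness of $\omega_Z(F + \pi^*\Delta_B) = \omega_\pi(F) \otimes \pi^*\omega_B(\Delta_B)$, I would proceed as follows. For $m$ sufficiently divisible, $\omega_\pi(F)^{[m]}$ is $\pi$-very ample and locally free by the stable family hypothesis, yielding a closed embedding $Z \hookrightarrow \PP_B(\pi_*\omega_\pi(F)^{[m]})$ along which $\omega_\pi(F)^{[m]}$ restricts from the tautological $\calO(1)$. Weak positivity of the direct image $\pi_*\omega_\pi(F)^{[m]}$, available from the semi-positivity machinery of Viehweg, Fujino, or Kov\'acs--Patakfalvi applied to the stable family $\pi$, combined with ampleness of $\omega_B(\Delta_B)$, gives ampleness of $\pi_*\omega_\pi(F)^{[m]} \otimes \omega_B(\Delta_B)$ on $B$. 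This promotes to ampleness of $\calO(1) \otimes \rho^*\omega_B(\Delta_B)$ on the projective bundle $\rho: \PP_B(\pi_*\omega_\pi(F)^{[m]}) \to B$, whose restriction to $Z$ is $\omega_\pi(F)^{[m]} \otimes \pi^*\omega_B(\Delta_B)$. Since $\pi^*\omega_B(\Delta_B)$ is nef as the pullback of an ample bundle, adding $(m-1)$ further copies preserves ampleness, producing ampleness of $(\omega_\pi(F) \otimes \pi^*\omega_B(\Delta_B))^{[m]}$, and hence of $\omega_Z(F + \pi^*\Delta_B)$ itself.

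The main obstacle is the weak positivity input for the direct image in the log canonical setting, which relies on substantial Hodge-theoretic semi-positivity results; once these are granted, everything else is formal manipulation of line bundles, projective bundles, and reflexive sheaves, and the coefficient bookkeeping in the final step uses only that ample plus nef is ample.
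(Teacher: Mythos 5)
Your reduction to a single stable family $\pi:(Z,F)\to B$ via the preceding proposition, and your treatment of the slc condition via Proposition~\ref{slcbase}, are consistent with the paper (whose own proof in fact only addresses ampleness). The problem is the positivity step. You claim that weak positivity of $\pi_*\omega_\pi(F)^{[m]}$ together with ampleness of $\omega_B(\Delta_B)$ gives \emph{ampleness} of $\pi_*\omega_\pi(F)^{[m]}\otimes\omega_B(\Delta_B)$, equivalently of $\calO(1)\otimes\rho^*\omega_B(\Delta_B)$ on the projective bundle. This implication is false: weak positivity in Viehweg's sense is a generic (birational) notion, and a single ample twist of a weakly positive sheaf only yields bigness-type conclusions, not nefness, let alone ampleness. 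Already for line bundles this fails: an effective divisor $L$ with $L\cdot C\ll 0$ for some curve $C$ is weakly positive, yet $L\otimes A$ need not be nef for a fixed ample $A$. So the step you describe as ``formal manipulation'' is exactly where the argument breaks. What you actually need is \emph{nefness} of the direct image (available for stable families over projective bases by the semipositivity results of Fujino and Kov\'acs--Patakfalvi), since a nef vector bundle tensored with an ample line bundle is ample; with that substitution your projective-bundle argument can be repaired.

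For comparison, the paper avoids pushforwards entirely. It quotes that $\omega_f(D)$ is nef for a stable family of maximal variation over a normal base (Proposition~2.15 of \cite{px}; see also \cite{fujino}), removes the maximal variation hypothesis by pulling the family back from a maximal variation family along a finite cover of the base and applying the projection formula to descend nefness, and then concludes directly: $\omega_\pi(F)$ is nef and $\pi$-ample, $\omega_B(\Delta_B)$ is ample because the base is stable, hence $\omega_Z(F+\pi^*\Delta_B)=\omega_\pi(F)\otimes\pi^*\omega_B(\Delta_B)$ is ample. Note that if you instead import nefness of the relative log canonical sheaf from \cite{px}, you must also address the case of non-maximal variation, which is precisely what the paper's finite-cover argument accomplishes; your proposal as written does not engage with this reduction.
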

\begin{proof} By Proposition 2.15 in \cite{px} (see also \cite{fujino} Theorem 1.13), if $f: (X,D) \to (B,E)$ is a stable family whose variation is maximal over a normal base, then $\omega_f(D)$ is nef.  First we note that it suffices to prove the statement over a normal base, since nef is a property which is decided on curves. Since normalization is a finite birational morphism, nonnegative intersection with a curve is preserved. Thus, we wish to show that this statement is true without the assumption that the variation of $f$ is maximal.  Let $B' \to B$ be a finite cover of the base so that the pullback family $f': (X', D') \to B'$ maps to $g: (\calU, \calD) \to T$, a family of maximal variation. In this case $\omega_{f'}(D')$ is nef, as it is the pullback of $\omega_g(\calD)$ which is nef. Since $\omega_{f'}(D')$ is the pullback of $\omega_f(D)$ by the finite morphism $X' \to X$, the projection formula implies that $\omega_f(D)$ is nef as well. This shows that the sheaf $\omega_f(D)$ is nef, regardless of whether the variation of $f$ is maximal or not. Then since $\omega_f(D)$ is nef and $f$-ample, and since the base is stable, $\omega_B(E)$ is ample. Therefore, we can conclude that $\omega_X(D + f^*E)   = \omega_f(D) \otimes f^*\omega_B(E)$ is ample. \end{proof}

The following theorem that we actually need follows from Proposition \ref{lcbig} and Proposition \ref{sing}.

\begin{theorem}\label{thm2} Let $(X,D) \to B$ be a stable family with integral and log canonical general fiber and maximal variation over a smooth projective variety $B$. Then there exists an integer $n > 0$ such that the pair $(X_B^n, D_n)$ is of log general type. \end{theorem}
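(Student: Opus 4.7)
The plan is to unpack the definition of log general type (Definition \ref{def}) and verify its two requirements separately, each of which has essentially been handled by a proposition already established in the excerpt. Recall that a pair $(Y, E)$ is of log general type if it has log canonical singularities and $\omega_Y(E)$ is big. So for Theorem \ref{thm2}, applied to $(X^n_B, D_n)$, it suffices to assemble both ingredients.

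First, I would invoke Proposition \ref{sing} to conclude that the fibered powers $(X^n_B, D_n)$ have log canonical singularities for \emph{every} $n > 0$. This uses only the hypotheses that $f : (X,D) \to B$ is a stable family with integral and log canonical general fiber over a smooth base, both of which are in force by assumption, and makes no use of maximal variation. So the singularity condition is automatic and places no constraint on $n$.

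Second, I would invoke Proposition \ref{lcbig} to conclude that for $n$ sufficiently large, $\omega_{X^n_B}(D_n)$ is big. This is exactly where the maximal variation hypothesis enters (Proposition \ref{lcbig} requires it), and it is the more substantive input: its proof passes through a $\Q$-factorial dlt modification together with a relative log canonical model in order to reduce to the klt case covered by Proposition \ref{kltbig} and the Kov\'acs--Patakfalvi theorem. Fix any such integer $n_0$ for which $\omega_{X^{n_0}_B}(D_{n_0})$ is big.

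Combining the two: for this $n_0$, the pair $(X^{n_0}_B, D_{n_0})$ has log canonical singularities by Proposition \ref{sing} and a big log canonical divisor by Proposition \ref{lcbig}, so it satisfies both bullet points of Definition \ref{def} and is therefore of log general type. There is no real obstacle here beyond citing the two previously established results; the entirety of the work has been done in Sections 2 and 3, and Theorem \ref{thm2} is simply the packaging of those results into the definition. The main ``hard part'' was already overcome in the proof of Proposition \ref{lcbig}, namely the passage from bigness of $\omega_{X^n_B}(\Delta_{\epsilon,n})$ on a klt dlt-modified model back to bigness of $\omega_{X^n_B}(D_n)$ on the original (only log canonical) family.
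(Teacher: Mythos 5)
Your proposal is correct and matches the paper's own proof exactly: the paper also deduces Theorem \ref{thm2} by citing Proposition \ref{lcbig} for bigness of $\omega_{X^n_B}(D_n)$ (where maximal variation is used) and Proposition \ref{sing} for the log canonical singularities of the fibered powers, then invoking Definition \ref{def}. Nothing further is needed.
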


\begin{proof} By Proposition \ref{lcbig}, we have that $\omega_{X_B^n}(D_n)$ is big, and by Proposition \ref{sing}, the fibered powers $(X_B^n, D_n)$ have log canonical singularities. \end{proof}

To prove the stronger Theorem \ref{thm3}, we must show that what we have proven also works after taking the quotient by a group of automorphisms. This is precisely the content of Proposition \ref{group} and Corollary \ref{group2} below. 

This claim essentially follows from the work of various authors in previous papers in the subject. The approach is present in, for example, Lemma 3.2.4 of \cite{AM} as well as Lemma 2.4 of \cite{pac}. We reproduce the statement in our case below:

\begin{prop}\label{group} Let $(X,D)$ be openly of log general type. There exists a positive integer $n$ such that the pair $(X^n_B, D_n) / G$ is also openly of log general type. \end{prop}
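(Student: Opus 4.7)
The plan is to exploit the fact that, for $n$ sufficiently large, the diagonal $G$-action on the fibered power $X^n_B$ has fixed loci of codimension at least two for every non-identity element. This makes the quotient map $\pi\colon X^n_B \to X^n_B/G$ étale in codimension one, and bigness of the log canonical sheaf will then descend to the quotient.

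First I would invoke Theorem \ref{openthm} to guarantee the existence of $n_0 > 0$ such that $(X^n_B, D_n)$ is openly of log general type for all $n \ge n_0$; that is, $\omega$ of a log resolution is big. Replacing $G$ by its faithful quotient if necessary, we may assume $G$ acts faithfully on the generic fiber of $f\colon (X,D) \to B$. Then for every $g \in G \setminus \{e\}$ the fiberwise fixed locus $\mathrm{Fix}_{X_b}(g)$ is a proper subvariety of $X_b$, so the fixed locus of $g$ acting diagonally on $X^n_B$ is cut out fiberwise by $\mathrm{Fix}_{X_b}(g)^n \subset X_b^n$ and therefore has codimension at least $n$ in $X^n_B$. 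A nontrivial $G$-action on $B$ only increases this codimension. Choosing $n \ge \max(n_0, 2)$ thus guarantees that the fixed locus of every nontrivial element has codimension at least two.

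With this codimension bound, $\pi$ is étale in codimension one, so I obtain a reflexive isomorphism $\pi^{[*]} \omega_{X^n_B/G}(D_n/G) \cong \omega_{X^n_B}(D_n)$, where $D_n/G$ denotes $\pi_* D_n$ viewed as a $\Q$-divisor on the quotient; this is well defined since $D_n$ is $G$-invariant and meets the ramification locus only in codimension at least two. Taking a $G$-equivariant log resolution of $(X^n_B, D_n)$ and its induced descent to a log resolution of $(X^n_B/G, D_n/G)$, the resulting morphism of log smooth pairs is finite surjective and étale in codimension one, so bigness of the log canonical on the upstairs resolution forces bigness on the downstairs one by the standard projection-formula argument. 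Hence $(X^n_B/G, D_n/G)$ is openly of log general type.

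The main obstacle I foresee is verifying that the log resolution of $(X^n_B, D_n)$ can be chosen $G$-equivariantly and compatibly with $\pi$, so that the reflexive pullback formula passes to the log resolutions without introducing spurious corrections along the exceptional locus of the resolution. The codimension estimate from the second paragraph is the crucial technical ingredient: it ensures that ramification contributions are negligible as far as global pluricanonical sections are concerned, and it is precisely the role of taking high fibered powers (as opposed to working directly on $(X,D)/G$) to supply this codimension.
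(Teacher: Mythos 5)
Your argument hinges on two claims, and both have genuine gaps. First, the codimension estimate: it is not true that for $n$ large the fixed locus of every nontrivial $g \in G$ on $X^n_B$ has codimension at least two. The fiberwise description $\mathrm{Fix}(g) \cap X_b^n = \mathrm{Fix}_{X_b}(g)^n$ only gives codimension growing with $n$ when $g$ acts nontrivially on the fiber $X_b$; an element acting through the base (i.e.\ acting trivially on the fibers over its fixed locus in $B$) has fixed locus equal to the full preimage of its fixed locus in the base, whose codimension is independent of $n$ and can be one. You cannot dispose of such elements by ``replacing $G$ by its faithful quotient,'' since they act nontrivially on $X^n_B$ and the quotient being formed is by all of $G$. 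This is exactly the situation in which the proposition is applied in Section 5: there $G = H \times K$ with $H$ the Galois group of the cover $\widetilde{\Sigma} \to \underline{\Sigma}$, and elements of $H$ can fix entire fibers over a divisorial ramification locus in the base. So the quotient map need not be \'etale in codimension one, no matter how large $n$ is, and your sentence ``a nontrivial $G$-action on $B$ only increases this codimension'' is the reverse of the truth.

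Second, even where the action is free in codimension one, ``bigness descends by the projection formula'' does not suffice for the openly-of-log-general-type conclusion, which is a statement about a desingularization of the quotient. The quotient of a smooth variety by a finite group whose fixed locus has codimension at least two has klt but in general non-canonical singularities (e.g.\ $\mathbb{C}^2$ modulo $\mathbb{Z}/3$ acting by scalars), so $G$-invariant pluri-log-canonical sections upstairs need not extend to pluri-log-canonical sections on a resolution of the quotient; counting sections on the singular quotient does not certify bigness on its resolution. The paper's proof handles both problems simultaneously, following \cite{chm} and \cite{dan2}: bigness of $\omega_f(D)$ is used to produce, on a high fibered power, many $G$-invariant sections of $\omega_{X^k_B}(D_k)^{\otimes n}$ vanishing to order at least $n|G|$ along the fixed-point locus of the action (by twisting with powers of the ideal sheaf $\calI_H$ of the fixed locus), and Lemma 4 of \cite{dan2} shows that precisely such sections descend to sections of the pluri-log-canonical sheaf of a good resolution of $(X^n_B/G, D_n/G)$. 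In other words, the role of the high fibered power is to supply these highly vanishing invariant sections, not to push the fixed locus into high codimension.
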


\begin{proof}
Let $H \subset X$ be the locus of fixed points of the action of $G \subseteq \Aut(X,D)$. Let $\calI_H$ denote the corresponding sheaf of ideals. We have seen before that $\omega_f(D)$ is big. Then, for sufficiently large $k$, we have that the sheaf $$\omega_{f}(D)^{\otimes k} \otimes f^* \omega_B^{\otimes k} \otimes  \calI_H^{|G|}$$ is big.  If we pass to the $k$th fibered power, we have that $$(\omega_{X^k_B}(D_k))^{\otimes k} \otimes f_k^* \omega_B^{\otimes k} \otimes \Pi_{i=1}^k \pi^{-1}\calI_H^{|G|}$$ is also big. 

The product $\Pi_{i=1}^k \pi_i^{-1} \calI_H^{|G|} \subset \left(\displaystyle\sum_{i=1}^k \pi^{-1}_i \calI_H^{|G|}\right)^k$, and the latter ideal vanishes to order at least $k|G|$ on the fixed points of the action of $G$. Moreover, we have that $$(\omega_{f_k}(D_k))^{\otimes k} \otimes \pi^*_k \omega_B^{\otimes k} = (\omega_{X^k_B}(D_k))^{\otimes k}.$$ This allows us to conclude that for $n \gg 0$, there are enough invariant sections of $\omega_{X^k_B}(D_k)^{\otimes n}$ vanishing on the fixed point locus to order at least $n|G|$. 

Now let $$r: (\calX, \calD) \to (X^k_B, D_k)$$ be an equivariant good resolution of singularities so that $r^{-1}(D_k) = \calD$. Note that such a resolution is guaranteed by Hironaka \cite{H}. Since $X \setminus D$ does not necessarily have canonical singularities away from the general fiber, we have introduced exceptional divisors in the resolution that will alter sections of $\omega_X(D)$. To fix this, we simply apply the methods used in the proof of Theorem \ref{openbig}-- namely twist by some small negative multiple of the divisor $\Delta$ containing the exceptional. 

 To conclude the result, it suffices to show that invariant sections of $(\omega_{X^k_B}(D_k))^{\otimes n}$ vanishing on the fixed point locus to order at least $n\cdot|G|$, descend to sections of the pluri-log canonical divisors of a good resolution of the quotient pair $(X^n_B/G, D_n/G)$. \\

Denote by $q: (\calX, \calD) \to (\calX/G, \calD/G)$ the morphism to the quotient, and let $$\phi: (\widetilde{\calX}/G, \widetilde{\calD}/G) \to (\calX /G, \calD /G)$$ denote a good resolution. Then Lemma 4 from \cite{dan2} tells us that the invariant sections of $\omega_{\calX}(\calD)^{\otimes n}$ vanishing on the fixed point locus to order $\geq n|G|$ come from sections of the pluri-log canonical divisors of a desingularization, i.e. sections of $\omega_{\widetilde{\calX}}(\widetilde{\calD})^{\otimes n}$. Therefore, for $n$ sufficiently large, the quotient pair $(X^n_B, D_n)/G$ is openly of log general type. 
\end{proof}

This also proves the following theorem:

\begin{theorem}\label{thm38} Let $f: (X,D) \to B$ be a stable family with integral, openly canonical, and log canonical general fiber over a smooth projective variety $B$. Suppose that the variation of the family $f$ is maximal.  Let $G$ be a finite group such that $(X,D) \to B$ is $G$-equivariant.  Then there exists an integer $n > 0$ such that the quotient $(X^n_B/G, D_n/G)$ is openly of log general type. \end{theorem}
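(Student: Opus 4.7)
The plan is to combine Theorem \ref{openthm} with the $G$-invariant descent argument used in Proposition \ref{group}. By Theorem \ref{openthm}, under the hypotheses of the statement there already exists an integer $m > 0$ such that $(X^m_B, D_m)$ is openly of log general type; equivalently, on a good resolution $r : (\calX, \calD) \to (X^m_B, D_m)$, the sheaf $\omega_{\calX}(\calD)$ is big. Since the original family $(X, D) \to B$ is $G$-equivariant, $G$ acts on every fibered power $(X^m_B, D_m)$, and by Hironaka one may choose the resolution $r$ to be $G$-equivariant.

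With bigness of $\omega_{\calX}(\calD)$ and the $G$-action in place, I would apply the argument of Proposition \ref{group} essentially verbatim. Letting $H \subset \calX$ be the fixed locus of $G$ with ideal sheaf $\calI_H$, bigness gives, for $k \gg 0$, that
\[
\omega_{\calX}(\calD)^{\otimes k} \otimes \calI_H^{|G|}
\]
remains big. Passing to a further fibered power over $B$ and using the projection formula together with the splitting formula of Lemma \ref{gorenstein} then produces many $G$-invariant sections of a high power of the log canonical sheaf on (a resolution of) $X^{mk}_B$ that vanish to order at least $k|G|$ along the fixed point locus of $G$. Lemma 4 of \cite{dan2} then shows that such invariant sections descend to pluri-log canonical sections on a good desingularization of the quotient $(X^{mk}_B/G, D_{mk}/G)$, so this quotient is openly of log general type in the sense of Definition \ref{def2}, with $n = mk$.

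The main technical obstacle is to ensure that the various resolutions involved, first of $(X^m_B, D_m)$ and then of the quotient by $G$, do not destroy bigness through exceptional divisor contributions. This is precisely where the "openly canonical" hypothesis on the general fiber is essential: by Definition \ref{open}, the complement $X \setminus D$ has canonical singularities, so pluri-log canonical forms pull back without loss through any equivariant resolution, and Abramovich's descent lemma applies directly without the need for the corrective $-\tfrac{1}{n}\Delta^{ss}$ twists that appear in the proof of Theorem \ref{openbig}. Once this is accounted for, the rest of the argument is a mechanical combination of bigness plus the equivariant-section construction of Proposition \ref{group}.
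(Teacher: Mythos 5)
Your overall skeleton is exactly the paper's: Theorem \ref{thm38} is obtained by combining the maximal-variation bigness statement (Theorem \ref{openbig}/\ref{openthm}) with the $G$-invariant section construction and descent via Lemma 4 of \cite{dan2}, as in Proposition \ref{group}. However, there is a genuine gap in the step where you dismiss the exceptional-divisor problem. You write that ``by Definition \ref{open}, the complement $X \setminus D$ has canonical singularities, so pluri-log canonical forms pull back without loss through any equivariant resolution,'' and conclude that the corrective $-\tfrac{1}{n}\Delta^{ss}$ twist of Theorem \ref{openbig} is unnecessary. This misreads the hypothesis: ``openly canonical'' is assumed only for the \emph{general fiber} $X_b \setminus D_b$, not for the total space of the family, and a fortiori not for the fibered powers $X^m_B \setminus D_m$. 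Over the special locus of $B$ the fibers degenerate, and by Proposition \ref{sing} the interior of $X^m_B$ is only log canonical there, not canonical. Consequently, the equivariant resolution $r:(\calX,\calD)\to(X^m_B,D_m)$ can introduce exceptional divisors over \emph{interior} points with positive log discrepancy contributions, and these exceptional divisors are not part of the boundary in Definition \ref{def2}; sections of $\omega_{X^m_B}(D_m)^{\otimes n}$ then acquire poles along them, so bigness downstairs does not automatically give bigness of the log canonical sheaf on the resolution, which is what ``openly of log general type'' requires. This is precisely why the paper, in the proof of Proposition \ref{group}, explicitly notes that $X\setminus D$ need not be canonical away from the general fiber and invokes the method of Theorem \ref{openbig}: one passes to a weak semistable model (or equivalently twists down by a small multiple of the divisor lying over the discriminant locus, which contains the exceptional locus) so that the lost positivity along the bad fibers is absorbed before descending $G$-invariant sections.

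As written, your argument would fail at the assertion that sections survive ``any equivariant resolution'' untouched; to repair it you must either perform the descent on the weak semistable model $(X_{ss}^n, D^{ss}_n)$ of Theorem \ref{openbig}, or keep the twist by a small negative multiple of the discriminant divisor when producing the invariant sections that vanish along the fixed locus, exactly as the paper does. With that correction the rest of your proposal (bigness of $\omega_{\calX}(\calD)^{\otimes k}\otimes\calI_H^{|G|}$, passage to a further fibered power via Lemma \ref{gorenstein} and the projection formula, and descent of invariant sections vanishing to order at least $k|G|$ on the fixed locus via Lemma 4 of \cite{dan2}) coincides with the paper's proof and goes through.
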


Furthermore, combining Proposition \ref{group} with Proposition \ref{sing} yields:

\begin{cor} \label{quotsing} Let $f:(X,D) \to B$ be an slc family with integral and log canonical general fiber over a smooth projective variety $B$. Then for $n$ large enough, the quotient pair $(X^n_B/G, D_n/G)$ also has log canonical singularities. \end{cor}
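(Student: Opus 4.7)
The plan is to combine the log canonicity of the fibered powers (Proposition \ref{sing}) with the general principle that log canonical singularities descend under finite quotients by groups preserving the pair structure. The statement of Proposition \ref{group} is invoked only to fix the value of $n$ (so that the bigness/positivity statement also holds simultaneously); for the singularity aspect alone, any $n > 0$ would suffice.

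First, I would apply Proposition \ref{sing} directly to conclude that the total space $(X^n_B, D_n)$ has log canonical singularities for every $n > 0$. Next, since $G$ acts on $(X, D) \to B$ by automorphisms of the pair, it acts diagonally on $(X^n_B, D_n)$, and this diagonal action preserves both the structure morphism $f_n : X^n_B \to B$ and the divisor $D_n = \sum_{i=1}^n \pi_i^* D$ (because $G$ preserves $D$ and commutes with the projections $\pi_i$). In particular, the action preserves $K_{X^n_B} + D_n$ as a $\Q$-Cartier class.

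The quotient morphism $\pi: X^n_B \to X^n_B/G$ is then a finite Galois morphism with $X^n_B/G$ normal, and we define the $\Q$-divisor $D_n/G$ on the quotient by the relation
$$K_{X^n_B} + D_n = \pi^*\bigl(K_{X^n_B/G} + D_n/G\bigr),$$
which determines $D_n/G$ uniquely and produces an effective divisor since $G$ acts as a group of pair-automorphisms. The key input is the standard fact (see for instance Proposition 5.20 of Kollár--Mori, or the discussion around Corollary 2.43 of Kollár's \emph{Singularities of the Minimal Model Program}) that for such a finite Galois quotient between normal pairs, log canonicity ascends and descends: $(X^n_B, D_n)$ is log canonical if and only if $(X^n_B/G, D_n/G)$ is. Combined with Step~1, this yields log canonicity of the quotient pair.

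The main obstacle is a bookkeeping one rather than a conceptual one: making sure that the definition of $D_n/G$ on the quotient agrees with the one implicitly used in Proposition \ref{group}, and that the $G$-action on $(X^n_B, D_n)$ is genuinely by pair-automorphisms (in particular that $D_n$ is $G$-stable as a divisor and not merely as a set). Once these compatibilities are checked, the log canonicity of the quotient is an immediate consequence of the finite-quotient principle, and invoking Proposition \ref{group} pins down an $n$ large enough that both the positivity conclusion and the singularity conclusion hold simultaneously.
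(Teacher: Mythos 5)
Your argument is correct, and it is in fact more explicit than what the paper does: the paper simply asserts that Corollary \ref{quotsing} follows by ``combining'' Proposition \ref{group} with Proposition \ref{sing}, without spelling out how the section-counting argument of Proposition \ref{group} (which concerns resolutions and the \emph{openly} of log general type condition) controls the singularities of the quotient. You instead go through the standard finite-quotient principle: $(X^n_B,D_n)$ is lc by Proposition \ref{sing}, the diagonal $G$-action preserves $K_{X^n_B}+D_n$, and log canonicity descends along the finite Galois quotient once the boundary downstairs is defined by the crepant relation (Koll\'ar--Mori, Prop.~5.20). This buys a self-contained proof that does not lean on the bigness machinery at all. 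Two small points deserve a word, both of which you flag as ``bookkeeping'': (i) to invoke the crepant descent statement you need $K_{X^n_B/G}+D_n/G$ to be $\Q$-Cartier; this is standard for quotients by finite groups (take a local equation of a multiple upstairs and apply the norm over $G$), but it should be said, since the descent lemma presupposes it; and (ii) the divisor defined by the crepant relation is the Hurwitz boundary, which a priori differs from the image divisor $D_n/G$ used in the statement when the quotient ramifies in codimension one. Here the fibered power structure helps: the fixed locus of a nontrivial diagonal element of $G$ in $X^n_B$ is the $n$-fold fibered product of its fixed locus in $X$, hence has codimension at least $n$, so for $n\geq 2$ the quotient is quasi-\'etale and the two divisors coincide --- which also gives an honest reason for the phrase ``for $n$ large enough'' beyond compatibility with Proposition \ref{group}. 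With those two remarks added, your proof is complete and, if anything, cleaner than the paper's citation-style justification.
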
 

This then gives an analogue to Proposition \ref{group} for pairs of log general type:

\begin{cor}\label{group2} Let $(X,D)$ be a pair of log general type. There exists a positive integer $n$ such that the pair $(X^n_B, D_n) / G$ is also a pair of log general type. \end{cor}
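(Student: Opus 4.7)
The plan is to adapt the invariant-sections argument of Proposition \ref{group} to the ``of log general type'' setting, where the singularity condition can be handled directly by Corollary \ref{quotsing} rather than by passing through a desingularization of the quotient.

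First I would invoke Corollary \ref{quotsing}, which immediately yields that for $n$ sufficiently large the quotient pair $(X^n_B/G, D_n/G)$ has log canonical singularities. This verifies the singularity condition of Definition \ref{def}, so it remains only to show that $\omega_{X^n_B/G}(D_n/G)$ is big.

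For the bigness step, Proposition \ref{lcbig} (after the reduction to the maximal variation case carried out in Section \ref{4}) gives that $\omega_{X^n_B}(D_n)$ is big for $n$ large. Since $(X^n_B, D_n)$ is $G$-equivariant, each reflexive power $\omega_{X^n_B}(D_n)^{[m]}$ carries a canonical $G$-linearization, and a standard averaging argument shows that the subspace of $G$-invariants in $H^0(X^n_B, \omega_{X^n_B}(D_n)^{[m]})$ has dimension at least $\frac{1}{|G|}$ of the total. Via the log canonical pullback identity $q^{*}(K_{X^n_B/G} + D_n/G) = K_{X^n_B} + D_n$ associated to the finite quotient map $q: X^n_B \to X^n_B/G$, these $G$-invariant sections correspond to sections of $\omega_{X^n_B/G}(D_n/G)^{[m]}$. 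Since the former grow like $m^{\dim X^n_B}$ by bigness on the cover, so do the latter, and we conclude that $\omega_{X^n_B/G}(D_n/G)$ is big.

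The main subtlety is the identification of $G$-invariant reflexive sections upstairs with reflexive sections of $\omega_{X^n_B/G}(D_n/G)^{[m]}$ downstairs, particularly when the $G$-action has codimension-one fixed locus and some care is needed to absorb ramification into the boundary divisor on the quotient. As in the proof of Proposition \ref{group}, this descent can be made rigorous using Lemma 4 of \cite{dan2}; in the present log canonical setting the argument is actually simpler than for Proposition \ref{group} because we do not need to pass to a desingularization, and so there is no need to impose extra vanishing of the invariant sections along the fixed locus.
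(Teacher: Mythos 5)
Your reduction of the singularity condition to Corollary \ref{quotsing} matches the paper, but the bigness step has a genuine gap. The identity $q^{*}(K_{X^n_B/G}+D_n/G)=K_{X^n_B}+D_n$ that you rely on is false whenever the $G$-action is ramified in codimension one outside $D_n$: Riemann--Hurwitz for the finite quotient $q$ gives $K_{X^n_B}+D_n=q^{*}\bigl(K_{X^n_B/G}+D_n/G+\sum_i \tfrac{r_i-1}{r_i}B_i\bigr)$, where the $B_i$ are the branch divisors and $r_i>1$ the ramification indices. Consequently a $G$-invariant section of $\omega_{X^n_B}(D_n)^{[m]}$ descends only to a section of the sheaf attached to this larger orbifold boundary; viewed as a rational section of $\omega_{X^n_B/G}(D_n/G)^{[m]}$ it acquires poles along the $B_i$ unless it vanishes along the fixed locus to order roughly $m\tfrac{r_i-1}{r_i}$. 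This is not a pathology you may ignore here: in the intended application (Proposition \ref{42}) the group $G$ contains the Galois group $H$ of the base cover, whose elements fix entire fibers over the ramification divisor of $\widetilde{\Sigma}\to\widetilde{\Sigma}/H$, so the fixed locus in $X^n_B$ really can be divisorial. This is exactly why Proposition \ref{group}, following \cite{dan2} and \cite{AM}, works with invariant sections vanishing to order at least $n|G|$ along the fixed locus (produced by twisting with $\calI_H^{|G|}$ and passing to fibered powers); dropping that vanishing, as you propose in your last paragraph, is not a simplification but removes precisely the ingredient that makes descent work. The paper's own route to Corollary \ref{group2} is simply to combine Proposition \ref{group}, whose fixed-locus-vanishing argument already supplies enough descending pluri-log-canonical sections, with Corollary \ref{quotsing} for the singularities.

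A second, smaller problem: the averaging claim that the $G$-invariants always occupy at least $\tfrac{1}{|G|}$ of $H^0\bigl(X^n_B,\omega_{X^n_B}(D_n)^{[m]}\bigr)$ is false for a general linear $G$-action (a one-dimensional sign representation has no invariants at all). What is true is an asymptotic statement about the growth of invariants, which one proves either via the norm map $s\mapsto\prod_{g\in G}g^{*}s$ or by descending to the quotient and pulling back --- and either argument again forces you to confront the ramification correction above. To repair the proof, either invoke Proposition \ref{group} directly, or reinstate the required vanishing of the invariant sections along the fixed locus before descending.
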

 
Thus we have completed the proof of the following Theorem \ref{thm3}. \\

\begin{theorem} \label{thm3} Let $(X,D) \to B$ be a stable family with integral and log canonical general fiber over a smooth projective variety $B$. Suppose that the variation of the family $f$ is maximal (see Definition \ref{max}). Let $G$ be a finite group such that $(X,D) \to B$ is $G$-equivariant. Then there exists an integer $n > 0$ such that the quotient of the pair by a finite group of automorphisms, $(X_B^n/G, D_n/G)$ is of log general type. \end{theorem}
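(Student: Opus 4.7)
The plan is to assemble the statement from the pieces already developed in the paper and then to pass to the $G$-quotient. First I would invoke Proposition \ref{lcbig}: because $f$ is a stable family of maximal variation with integral and log canonical general fiber, it produces an integer $n_0>0$ such that $\omega_{X^n_B}(D_n)$ is big for every $n\geq n_0$. Proposition \ref{sing} simultaneously yields that each fibered power $(X^n_B,D_n)$ has log canonical singularities. Together these are precisely Theorem \ref{thm2}, so for an appropriate $n_0$ the pair $(X^{n_0}_B,D_{n_0})$ is already of log general type. What remains is to check that this property is preserved after dividing by the induced action of $G$, perhaps after enlarging $n$ once more.

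For the $G$-quotient step I would apply Corollary \ref{group2} to the $G$-equivariant pair $(X^{n_0}_B,D_{n_0})$. Unpacking that corollary, the argument is the descent used in Proposition \ref{group}, now transported from the openly-of-log-general-type setting to the pair-of-log-general-type setting: take $H\subset X^{n_0}_B$ to be the fixed locus of $G$, form its ideal sheaf $\calI_H$, and use the bigness of $\omega_{X^{n_0}_B}(D_{n_0})$ together with the reflexive symmetric-power estimate of Proposition 5.2 of \cite{has} to produce, for suitably large $n$, a supply of $G$-invariant sections of $\omega_{X^n_B}(D_n)^{\otimes n}$ vanishing to order at least $n|G|$ on $H$. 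By Lemma 4 of \cite{dan2} such invariant sections descend to honest pluri-log-canonical sections on a good equivariant resolution of $(X^n_B/G,D_n/G)$, so $\omega(D_n/G)$ is big on the quotient. Corollary \ref{quotsing} simultaneously provides log canonical singularities on the quotient for $n$ large enough, and the two facts together show that $(X^n_B/G,D_n/G)$ is of log general type.

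The main obstacle is the descent step. Taking a $G$-quotient can destroy bigness, because pluri-log-canonical sections on the quotient correspond only to $G$-invariant sections on $X^n_B$ whose vanishing on the ramification locus absorbs the $|G|$-fold branching. The cure is the order-of-vanishing bookkeeping, which forces the further enlargement of $n$: one needs the count of invariant sections vanishing on $H$ to order $\geq n|G|$ to grow of order $m^{n\dim X_\eta + b}$, in analogy with the section-counting in the proof of Proposition \ref{kltbig}, so that bigness survives the quotient. Once this growth is secured, Corollary \ref{group2} packages the descent, and the theorem follows directly from Theorem \ref{thm2}.
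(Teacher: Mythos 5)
Your proposal is correct and follows essentially the same route as the paper: the paper proves Theorem \ref{thm3} by combining Theorem \ref{thm2} (which is exactly Proposition \ref{lcbig} for bigness plus Proposition \ref{sing} for log canonicity of the fibered powers) with Corollary \ref{group2}, whose content is the descent argument of Proposition \ref{group} together with Corollary \ref{quotsing}, just as you describe. Your unpacking of the quotient step (invariant sections vanishing to order at least $n|G|$ on the fixed locus and Lemma 4 of \cite{dan2}) matches the paper's argument.
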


\begin{proof} This follows from Theorem \ref{thm2} and Corollary \ref{group2}. \end{proof}

The next and final section shows how to reduce the proof of the Theorem \ref{fiber} to Theorem \ref{thm3}. Then, we show that Theorem \ref{fiber2} follows from Theorem \ref{fiber}.

%%%%%%%%%%%%%%%%%%%%%%%%%%%%%%%%%%%%%%

\section{Proof of Theorems \ref{fiber} and \ref{fiber2} -- Reduction to case of max variation} \label{4}
 The final section of this paper is devoted to reducing the proofs of our two main theorems to the case of maximal variation. We will use the existence of a tautological family over a finite cover of our moduli space to show that, after a birational modification of the base, the pullback of a stable family with integral and log canonical general fiber has a morphism to the quotient of a family of maximal variation by a finite group. Then using the fact that our result holds for families of maximal variation, we will conclude that, after a modification of the base, a high fibered power of the pullback of a stable family with integral and log canonical general fiber has a morphism to a pair of log general type. 
 
 Finally, we show that if we add the assumption that the general fiber of our family is openly canonical and log canonical, we can avoid taking a modification of the base to prove Theorem \ref{fiber2}.
 
 \begin{remark}As we will be using the moduli space of stable pairs $\overline{M}_h$,  we remind the reader that we are in the situation of Assumption 1.16. \end{remark}

Unfortunately the moduli space $\overline{M}_h$ that we are working with does \emph{not} carry a universal family. The following lemma gives a \emph{tautological family}, which can be thought of as an approximation of a universal family. \begin{lemma}[{\cite[Corollary 5.19]{kp}}]\label{univ} There exists a tautological family $(\calT, \calD)$ over a finite cover $\Omega$ of the moduli space $\overline{M}_h$ of stable log pairs. That is, there exists a variety $\Omega$,  a finite surjective map $\phi: \Omega \to \overline{M}_h$ and a stable family $\calT \to \Omega$ such that $\phi(x) = \left[(\calT_x, \calD_x) \right]$. \end{lemma}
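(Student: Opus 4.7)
The plan is to lift the problem from the coarse moduli space $\overline{M}_h$ to the corresponding moduli stack $\overline{\mathcal{M}}_h$, and then produce a scheme that finitely covers the stack. The moduli space $\overline{M}_h$ constructed in the cited works of Koll\'ar and others arises as the coarse moduli space of a separated Deligne-Mumford stack $\overline{\mathcal{M}}_h$ of finite type parametrizing stable log pairs with the fixed numerical invariants $h$ (with the Assumption 1.16 ensuring the right boundary coefficients so that the functor is bounded and separated). On the stack $\overline{\mathcal{M}}_h$ there is, tautologically, a universal family $(\mathcal{U}, \mathcal{B}) \to \overline{\mathcal{M}}_h$.

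Once this stack-theoretic framework is in place, I would invoke the general fact that any separated Deligne-Mumford stack of finite type over a field of characteristic zero with finite diagonal admits a finite, surjective morphism $\Omega \to \overline{\mathcal{M}}_h$ from a scheme $\Omega$. One way to produce such a cover is to present $\overline{\mathcal{M}}_h \cong [H/PGL_N]$ as a quotient of a suitable locally closed subscheme $H$ of a Hilbert scheme (parametrizing pluri-log-canonically embedded stable pairs), and then rigidify the $PGL_N$ action by imposing a level structure or by slicing with a transverse section to get a finite cover. Alternatively, one can appeal directly to the Keel-Mori theorem and the existence of finite flat covers for DM stacks with finite inertia. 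By Koll\'ar's condition, pullback of the universal family commutes with base change, so the pullback $(\mathcal{T}, \mathcal{D}) := (\mathcal{U}, \mathcal{B}) \times_{\overline{\mathcal{M}}_h} \Omega$ is a stable family over $\Omega$ whose fiber over $x \in \Omega$ is the stable pair classified by $\phi(x) := $ (image of $x$ in $\overline{M}_h$).

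Finally, the composition $\Omega \to \overline{\mathcal{M}}_h \to \overline{M}_h$ is finite and surjective: the coarse moduli map $\overline{\mathcal{M}}_h \to \overline{M}_h$ is proper and quasi-finite (hence finite by Chevalley), and the cover $\Omega \to \overline{\mathcal{M}}_h$ is finite and surjective by construction, so their composition has the required properties. This produces the tautological family $(\mathcal{T}, \mathcal{D}) \to \Omega$ along with $\phi: \Omega \to \overline{M}_h$ such that $\phi(x) = [(\mathcal{T}_x, \mathcal{D}_x)]$.

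The main obstacle in such a proof is not the stack-theoretic manipulation but rather the existence and properness of the moduli stack $\overline{\mathcal{M}}_h$ itself, which requires Koll\'ar's condition to behave well in families, the boundedness of the moduli functor under the numerical constraints $h$, and the verification that automorphism groups are finite so that the stack is Deligne-Mumford with finite diagonal. These inputs are exactly what is developed in \cite{kp} and the references therein, which is why the lemma is cited rather than reproved here.
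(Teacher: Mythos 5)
The paper does not actually prove this lemma: it is imported verbatim as \cite[Corollary 5.19]{kp}, so there is no in-paper argument to compare yours against. That said, your sketch is essentially the standard argument underlying the cited result: view $\overline{M}_h$ as the coarse space of a separated, finite-type Deligne--Mumford stack $\overline{\mathcal{M}}_h$ carrying a universal family, invoke the general theorem that such a stack admits a finite surjective morphism from a scheme (Laumon--Moret-Bailly, or Kresch--Vistoli), pull back the universal family --- Koll\'ar's condition being built into the moduli functor so the pullback is again a stable family --- and observe that the composite $\Omega \to \overline{M}_h$ is proper and quasi-finite, hence finite. Your closing remark correctly locates the real mathematical weight: the existence, boundedness and separatedness of $\overline{\mathcal{M}}_h$ (including the bookkeeping of the divisor under Assumption 1.16) is exactly what the citation to \cite{kp} carries, and the paper leans on that rather than reproving it. One caveat: your first ``one way to produce such a cover,'' namely presenting $\overline{\mathcal{M}}_h$ as $[H/PGL_N]$ and then ``slicing with a transverse section'' or imposing a level structure, is not a proof as stated --- a transverse slice only gives an \'etale local chart, not a finite cover, and level structures are not available for stable pairs in this generality --- but this does not damage the argument since the general finite-cover theorem for DM stacks with finite inertia, which you also invoke, suffices on its own. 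Minor cosmetic point: the cover produced this way need only be a scheme, so to match the lemma's phrasing you should pass to an irreducible component (or normalize) to obtain a variety $\Omega$, which is harmless.
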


\begin{prop}\label{42} Let $f: (X,D) \to B$ be a stable family such that the general fiber is integral and has log canonical singularities. Then there exists a birational modification of the base $\widetilde{B} \to B$, and a morphism $(\widetilde{X},\widetilde{D}) \to \widetilde{B}$ to $(\calT_{\widetilde{\Sigma}}, \widetilde{\calD})/G$, the quotient of a family of maximal variation by a finite group $G$ \end{prop}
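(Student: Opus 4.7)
The plan is to Stein-factorize the moduli map of $f$ to isolate the maximally varying component, and then use the tautological family from Lemma \ref{univ} to realize that component as an honest family, whose quotient by a finite group will receive the desired morphism.

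First I would form the canonical moduli map $\phi \colon B \to \overline{M}_h$ attached to $f$ and Stein-factorize it as
\[
B \xrightarrow{\pi} \Sigma \xrightarrow{\psi} \overline{M}_h,
\]
where $\pi$ has connected fibers and $\psi$ is finite. The fibers of $\pi$ parameterize the isotrivial subfamilies of $f$, while $\Sigma$ records the genuine moduli variation of the family.

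Second, since $\overline{M}_h$ carries no universal family, I would import structure from Lemma \ref{univ}: let $\Omega \to \overline{M}_h$ be the finite cover carrying the tautological stable family $(\calT, \calD)$, and set $\widetilde{\Sigma}$ to be an irreducible component of $\Sigma \times_{\overline{M}_h} \Omega$, replacing it if necessary by a Galois closure over $\Sigma$, so that $\widetilde{\Sigma} \to \Sigma$ is Galois with some finite group $G$. The pullback $(\calT_{\widetilde{\Sigma}}, \widetilde{\calD}) \to \widetilde{\Sigma}$ is a stable family whose moduli map factors through the generically finite composition $\widetilde{\Sigma} \to \Sigma \xrightarrow{\psi} \overline{M}_h$, so this family has maximal variation in the sense of Definition \ref{max}.

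Third, after endowing $(\calT_{\widetilde{\Sigma}}, \widetilde{\calD})$ with a $G$-equivariant structure lifting the Galois action on $\widetilde{\Sigma}$, the quotient $(\calT_{\widetilde{\Sigma}}, \widetilde{\calD})/G$ becomes a family over $\Sigma$ whose fiber over $s \in \Sigma$ is isomorphic, as a stable pair, to the fiber of $f$ over any $b \in \pi^{-1}(s)$, since both represent the point $\psi(s) \in \overline{M}_h$. These fiberwise isomorphisms assemble, over a dense open of $B$, into a $B$-rational map into the base change $\bigl((\calT_{\widetilde{\Sigma}}, \widetilde{\calD})/G\bigr) \times_{\Sigma} B$. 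Taking a birational modification $\widetilde{B} \to B$ that resolves the indeterminacy (for instance, a resolution of the closure of the graph) and pulling the family back to $\widetilde{B}$, one obtains a genuine morphism $(\widetilde{X}, \widetilde{D}) \to (\calT_{\widetilde{\Sigma}}, \widetilde{\calD})/G$, as required.

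The main obstacle I anticipate is the lift of the Galois action in the third step: fibers of $\calT$ over distinct points of $\Omega$ lying above the same point of $\overline{M}_h$ are only isomorphic up to (non-canonical) automorphisms of the corresponding stable pair, so the $G$-action on $\widetilde{\Sigma}$ does not automatically lift to $\calT_{\widetilde{\Sigma}}$. Handling this cleanly will likely require exploiting rigidifying data (such as a level structure) already built into Lemma \ref{univ}, or passing to a further $G$-equivariant refinement of $\widetilde{\Sigma}$ and enlarging $G$, after which one must verify that the resulting quotient is an honest variety rather than merely a stack. The subsequent resolution of indeterminacy for the rational map to $(\calT_{\widetilde{\Sigma}}, \widetilde{\calD})/G$ is comparatively routine in characteristic zero.
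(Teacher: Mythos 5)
There is a genuine gap, and you have in fact put your finger on it yourself without closing it: the morphism from the (modified) family to the quotient $(\calT_{\widetilde{\Sigma}}, \widetilde{\calD})/G$ is never actually constructed. Your step three argues that the fibers of $f$ and of the quotient family over corresponding points represent the same point of $\overline{M}_h$, hence are abstractly isomorphic, and that "these fiberwise isomorphisms assemble, over a dense open of $B$, into a rational map." They need not: the isomorphisms are non-canonical precisely because the stable pairs have nontrivial automorphisms, and pointwise isomorphy of fibers does not yield an algebraic family of isomorphisms (this is the whole reason $\overline{M}_h$ has no universal family). Moreover the claimed identification is not even the right one -- once you quotient by a group large enough to make the construction descend, the generic fiber of the quotient family is $(S, D_S)/K$ with $K = \Aut(S,D_S)$, not $(S, D_S)$, so the map you want is generically the quotient map on fibers, not an isomorphism. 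Your flagged obstruction about lifting the Galois action to $\calT_{\widetilde{\Sigma}}$ is the same problem in another guise, and "level structure" or "further refinement, enlarging $G$" are not carried out, so the proof does not go through as written.

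The paper circumvents exactly this point by never trying to assemble fiberwise isomorphisms. It uses the stack structure: the canonical map $B \to \overline{M}_h$ with image $\Sigma$ gives, by the universal property of the stack and passage to coarse spaces, a canonical morphism $(X,D) \to (\underline{\calT}_{\Sigma}, \underline{\calD})$ over $B \to \underline{\Sigma}$. It then takes a Galois cover with group $H$ (followed by an equivariant resolution) $\widetilde{\Sigma} \to \underline{\Sigma}$ carrying the tautological family, sets $G = H \times K$ where $K$ is the generic automorphism group, and proves that $\nu: (\calT_{\widetilde{\Sigma}}, \widetilde{\calD})/G \to (\calT_{\widetilde{\Sigma}/H}, \widetilde{\calD}')$ is the normalization (it is finite, the source is normal, and it is birational because quotienting by $H$ identifies fibers while quotienting by $K$ removes the automorphisms). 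Taking $\widetilde{B} = B \times_{\Sigma} \widetilde{\Sigma}/H$, the pulled-back pair $(\widetilde{X}, \widetilde{D})$ maps canonically to $(\calT_{\widetilde{\Sigma}/H}, \widetilde{\calD}')$, and since $(\widetilde{X}, \widetilde{D})$ is normal this map lifts through $\nu$ by the universal property of normalization. That normalization argument -- together with including $K$ in the group $G$ -- is the key idea your proposal is missing; without it (or an honest rigidification of the tautological family, which Lemma \ref{univ} does not provide), the desired morphism does not materialize.
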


\begin{proof} Let $f: (X,D) \to B$ be a stable family such that the general fiber is integral and has log canonical singularities. In particular, we do \emph{not} assume that the variation of $f$ is maximal. There is a well defined canonical morphism $B \to \overline{M}_h$. Call the image of this morphism $\Sigma$. Over this $\Sigma$ lies the universal family $(\calT_{\Sigma}, \calD)$. Since $\overline{M}_h$ is a stack, the maps $(X,D) \to (\calT_{\Sigma}, \calD)$ and $B \to \Sigma$ factor through the coarse spaces: $\underline{\Sigma}$ and $(\underline{\calT}_{\Sigma}, \underline{\calD})$. The general fiber of $(\underline{\calT}_{\Sigma}, \underline{\calD}) \to \underline{\Sigma}$ is simply $(S, D_S) / K$ where $(S, D_S)$ is a pair of log general type and $K$ is the finite automorphism group. \\

Unfortunately there is no control on the singularities of $\Sigma$ -- if the singularities are not too mild, the fibered powers $(\calT^n_{\Sigma}, \calD_n)$ have no chance of having log canonical singularities.  To remedy this we take a resolution of singularities. Using Proposition \ref{univ}, we take a Galois cover followed by an equivariant resolution of singularities to obtain $\widetilde{\Sigma} \to \underline{\Sigma}$. Call the Galois group of this cover $H$. Then over $\widetilde{\Sigma}$, we have a tautological family $(\calT_{\widetilde{\Sigma}}, \widetilde{\calD})$. Here the general fiber is simply $(S, D_S)$, a pair of log general type.

Consider the quotient map $\widetilde{\Sigma} \to \widetilde{\Sigma}/H$. Taking the pullback of $(\underline{\calT}_{\Sigma}, \underline{\calD})$ through $\widetilde{\Sigma} / H$ yields $(\calT_{\widetilde{\Sigma} /H}, \widetilde{D}')$. Letting $G$ be the group $G = H \times K$, we can construct the following diagram: \begin{center} \begin{tikzcd}
(\calT_{\widetilde{\Sigma}}, \widetilde{D}) \ar{r}\ar{d} & (\calT_{\widetilde{\Sigma}}, \widetilde{\calD})/G \ar{r}{\nu}\ar{d} & (\calT_{\widetilde{\Sigma}/H},  \widetilde{\calD}') \ar{r}\ar{d} & (\underline{\calT}_{\Sigma}, \underline{\calD}) \ar{d}\ar{r} & \calT \ar{d} \\
\widetilde{\Sigma} \ar{r} & \widetilde{\Sigma} / H \ar{r} & \widetilde{\Sigma} / H \ar{r} & \underline{\Sigma} \ar{r} & \overline{M}_h \\
\end{tikzcd} 
\end{center}

We claim that the map $\nu: (\calT_{\widetilde{\Sigma}}, \widetilde{D})/G \to (\calT_{\widetilde{\Sigma}/H}, \widetilde{\calD}')$ is actually the normalization of $(\calT_{\widetilde{\Sigma}/H}, \widetilde{\calD}')$. First note that $(\calT_{\widetilde{\Sigma}}, \widetilde{\calD}) /G$ is normal, and that the morphism $\nu$ is finite since the morphism $(\calT_{\widetilde{\Sigma}}, \widetilde{\calD}) \to (\underline{\calT}_{\Sigma}, \underline{\calD})$ is. Therefore, to prove $\nu$ is the normalization of $ (\calT_{\widetilde{\Sigma}/H}, \widetilde{\calD}')$, it suffices to prove that $\nu$ is birational. To do so, consider the following diagram:

\begin{center} \begin{tikzcd}
(\calT_{\widetilde{\Sigma}}, \widetilde{\calD}) \ar{r}\ar{d} & (\calT_{\widetilde{\Sigma}}, \widetilde{\calD}) / H \ar{r}\ar{d} & \left((\calT_{\widetilde{\Sigma}}, \widetilde{\calD}) / H\right) / K = (\calT_{\widetilde{\Sigma}}, \widetilde{\calD}) / G \ar{d} \\
\widetilde{\Sigma} \ar{r} & \widetilde{\Sigma} /H \ar{r} & \widetilde{\Sigma} /H  \\
\end{tikzcd}
\end{center}

From this diagram it is clear that the general fiber of $(\calT_{\widetilde{\Sigma}}, \widetilde{\calD}) /G \to \widetilde{\Sigma} /H$ is precisely $(S, D_S) /K$ -- the quotient by $H$ identifies fibers and the quotient by $K$ removes the automorphisms. Since the map $\nu$ is an isomorphism over the generic fibers, $\nu$ is a birational map and thus is the normalization of  $(\calT_{\widetilde{\Sigma}/H}, \widetilde{\calD}')$.\\

The pair $(X,D)$ does \emph{not} map to $(\calT_{\widetilde{\Sigma}/H}, \widetilde{\calD}')$.  Instead, consider a modification of the base $\widetilde{B} \to B$ where $\widetilde{B} = B \times_{\Sigma} \widetilde{\Sigma}/H$.
Then the pullback $(\widetilde{X}, \widetilde{D})$ maps to $(\calT_{\widetilde{\Sigma}/H}, \widetilde{\calD}')$. Since the pair $(\widetilde{X}, \widetilde{D})$ is normal and $\nu$ is the normalization, we see that $(\widetilde{X}, \widetilde{D})$ also maps to $(\calT_{\widetilde{\Sigma}}, \widetilde{\calD}) /G$. Finally, because the family $(\calT_{\widetilde{\Sigma}}, \widetilde{\calD}) /G \to \widetilde{\Sigma} /H$ is the quotient of a family of maximal variation by a finite group, we have completed the proof of the proposition. \end{proof}

\begin{proof}[Proof of Theorem \ref{fiber}]
Let $(\calT_{\widetilde{\Sigma}}, \widetilde{\calD})$ denote the tautological family of maximal variation obtained in the proof of Proposition \ref{42}. Passing to $n$th fibered powers, Theorem \ref{thm3} guarantees that $(\calT^n_{\widetilde{\Sigma}}, \widetilde{\calD}_n)$ is of log general type. By Corollary \ref{group2}, $(\calT^n_{\widetilde{\Sigma}}, \widetilde{\calD}_n)/G$ is also of log general type for $n$ sufficiently large.  Thus the proof of Theorem \ref{fiber} follows from the above Proposition \ref{42}, as we have shown that after modifying the base, we obtain a morphism from a high fibered power of our family to a pair of log general type.
\end{proof}

Finally, we prove Theorem \ref{fiber2}, the fibered power theorem for pairs openly of log general type.

\begin{proof}[Proof of Theorem \ref{fiber2}]
This proof essentially follows from the proof of Proposition \ref{42}. Assuming that the general fiber is openly canonical and log canonical, Theorem \ref{thm38} shows that, for $n$ sufficiently large, the pair $(\calT^n_{\widetilde{\Sigma}}, \widetilde{\calD}_n)/G$ is openly of log general type. Since there is a birational morphism  $(\calT^n_{\widetilde{\Sigma}}, \widetilde{\calD}_n)/G \to (\underline{\calT}^n_{\Sigma}, \underline{\calD}_n)$, it follows that $(\underline{\calT}^n_{\Sigma}, \underline{\calD}_n)$ is also openly of log general type.  Therefore, we have constructed a morphism from a high fibered power of our family to a pair openly of log general type, and have thus completed the proof of the theorem. The upshot here, is that we do not have to modify the base of our starting family.
\end{proof}

\bibliographystyle{alpha}
\bibliography{fiberedpower}

\begin{thebibliography}{BHPS13}



\bibitem[Abr97a]{dan}
Dan Abramovich.
\newblock A high fibered power of a family of varieties of general type
  dominates a variety of general type.
\newblock {\em Invent. Math.}, 128(3):481--494, 1997.

\bibitem[Abr97b]{dan2}
Dan Abramovich.
\newblock Uniformity of stably integral points on elliptic curves.
\newblock {\em Invent. Math.}, 127(2):307--317, 1997.

\bibitem[AH11]{ah}
Dan Abramovich and Brendan Hassett.
\newblock Stable varieties with a twist.
\newblock In {\em Classification of algebraic varieties}, EMS Ser. Congr. Rep.,
  pages 1--38. Eur. Math. Soc., Z\"urich, 2011.

\bibitem[AM01]{AM}
Dan Abramovich and Kenji Matsuki.
\newblock Uniformity of stably integral points on principally polarized abelian
  varieties of dimension {$\le2$}.
\newblock {\em Israel J. Math.}, 121:351--380, 2001.

\bibitem[AT16]{at}
Kenneth Ascher and Amos Turchet.
\newblock Uniformity of stably integral points on varieties of log general
  type.
\newblock {\em In Preparation}, 2016.

\bibitem[AV96]{av}
Dan Abramovich and Jos{\'e}~Felipe Voloch.
\newblock Lang's conjectures, fibered powers, and uniformity.
\newblock {\em New York J. Math.}, 2:20--34, electronic, 1996.

\bibitem[BHPS13]{products}
Bhargav Bhatt, Wei Ho, Zsolt Patakfalvi, and Christian Schnell.
\newblock Moduli of products of stable varieties.
\newblock {\em Compositio Mathematica}, 149:2036--2070, 12 2013.

\bibitem[CHM97]{chm}
Lucia Caporaso, Joe Harris, and Barry Mazur.
\newblock Uniformity of rational points.
\newblock {\em J. Amer. Math. Soc.}, 10(1):1--35, 1997.

\bibitem[Fuj12]{fujino}
Osamu Fujino.
\newblock Semipositivity theorems for moduli problems.
\newblock {\em arXiv: 1210.5784}, 2012.

\bibitem[Har80]{hart}
Robin Hartshorne.
\newblock Stable reflexive sheaves.
\newblock {\em Math. Ann.}, 254(2):121--176, 1980.

\bibitem[Has96]{has}
Brendan Hassett.
\newblock Correlation for surfaces of general type.
\newblock {\em Duke Math. J.}, 85(1):95--107, 1996.

\bibitem[Hir77]{H}
Heisuke Hironaka.
\newblock Idealistic exponents of singularity.
\newblock In {\em Algebraic Geometry, J.J Sylvester Symposium}, pages 52--125.
  Johns Hopkins Univ. Press, 1977.

\bibitem[HK04]{hk}
Brendan Hassett and S{\'a}ndor {Kov{\'a}cs}.
\newblock Reflexive pull-backs and base extension.
\newblock {\em J. Algebraic Geometry}, 13:233--247, 2004.

\bibitem[Kar99]{karu}
Kalle Karu.
\newblock {\em Semistable Reduction in Characteristic 0}.
\newblock PhD thesis, Boston University, 1999.

\bibitem[KK10]{kk}
J{\'a}nos Koll{\'a}r and S{\'a}ndor~J. Kov{\'a}cs.
\newblock Log canonical singularities are {D}u {B}ois.
\newblock {\em J. Amer. Math. Soc.}, 23(3):791--813, 2010.

\bibitem[KM98]{km}
J{\'a}nos Koll{\'a}r and Shigefumi Mori.
\newblock {\em Birational geometry of algebraic varieties}, volume 134 of {\em
  Cambridge Tracts in Mathematics}.
\newblock Cambridge University Press, Cambridge, 1998.
\newblock With the collaboration of C. H. Clemens and A. Corti, Translated from
  the 1998 Japanese original.

\bibitem[Kol]{kol}
J\'anos Koll\'ar.
\newblock {\em Book on Moduli of Surfaces}.
\newblock In Progress.

\bibitem[Kol13]{sing}
J\'anos Koll\'ar.
\newblock {\em Singularities of the Minimal Model Program}, volume 200 of {\em
  Cambridge Tracts in Mathematics}.
\newblock Cambridge University Press, 2013.

\bibitem[KP15]{kp}
S.~J {Kov{\'a}cs} and Z.~{Patakfalvi}.
\newblock {Projectivity of the moduli space of stable log-varieties and
  subadditvity of log-Kodaira dimension}.
\newblock {\em ArXiv e-prints}, March 2015.

\bibitem[Pac97]{pac2}
Patricia Pacelli.
\newblock Uniform boundedness for rational points.
\newblock {\em Duke Math. J.}, 88(1):77--102, 1997.

\bibitem[Pac99]{pac}
Patricia Pacelli.
\newblock Uniform bounds for stably integral points on elliptic curves.
\newblock {\em Proc. of the Amer. Math Soc.}, 127(9):2535--2546, 1999.

\bibitem[Pat]{zsolt}
Zsolt Patakfalvi.
\newblock Fibered stable varieties.
\newblock {\em Trans. Amer. Math. Soc.}, To Appear.

\bibitem[PX15]{px}
Z.~{Patakfalvi} and C.~{Xu}.
\newblock {Ampleness of the CM line bundle on the moduli space of canonically
  polarized varieties}.
\newblock {\em ArXiv e-prints}, March 2015.

\end{thebibliography}
\end{document}